\documentclass{article}

  \usepackage[preprint]{neurips_2026}

\usepackage[utf8]{inputenc} 
\usepackage[T1]{fontenc}    
\usepackage{hyperref}       
\usepackage{url}            
\usepackage{booktabs}       
\usepackage{amsfonts}       
\usepackage{nicefrac}       
\usepackage{microtype}      
\usepackage{xcolor}         

\usepackage{caption}
\usepackage{wrapfig}
\usepackage{algorithm}
\usepackage{algpseudocode}

\usepackage{enumitem}
\usepackage{amsmath}
\usepackage{amssymb}
\usepackage{amsthm}
\usepackage{mathtools}
\usepackage{bm}
\usepackage{graphicx}
\usepackage{natbib}
\usepackage{tikz}
\usetikzlibrary{arrows.meta, positioning, calc, decorations.pathreplacing}

\hypersetup{
  colorlinks=true,
  citecolor=red,
  linkcolor=blue,
  urlcolor=blue
}

\theoremstyle{plain}
\newtheorem{theorem}{Theorem}

\theoremstyle{plain}
\newtheorem{proposition}{Proposition}

\theoremstyle{plain}
\newtheorem{lemma}{Lemma}

\theoremstyle{plain}
\newtheorem{corollary}{Corollary}

\theoremstyle{plain}
\newtheorem{assumption}{Assumption}

\theoremstyle{plain}
\newtheorem{definition}{Definition}

\theoremstyle{plain}
\newtheorem{remark}{Remark}

\theoremstyle{plain}
\newtheorem{example}{Example}

\usepackage{titletoc} 

\title{Equilibrium and Pricing in Consumer Networks with Nonlinear Utilities: An Online Shape-Constrained Learning Approach}

\author{%
    Daniele Bracale\thanks{Corresponding author.} \\
  Department of Statistics\\
  University of Michigan\\
  Ann Arbor, MI \\
  \texttt{dbracale@umich.edu} \\
  \And
  George Michailidis\\
  Department of Statistics\\
  University of California \\
  Los Angeles, CA \\
  \texttt{gmichail@g.ucla.edu} \\
}

\begin{document}

\maketitle

\begin{abstract}
We study optimal monopoly pricing over consumer networks governed by general nonlinear utilities. In our framework, a consumer's utility is jointly determined by an individualized price and the consumption choices of their peers, propagated through a directed and signed social graph. This formulation encapsulates a broad class of utility functions; it strictly generalizes the traditional linear-quadratic framework to include logit-type discrete choice, isoelastic, and Stone-Geary utilities under a single theoretical umbrella. We first establish the existence and uniqueness of the consumer-side equilibrium under general contraction and variational conditions, explicitly accommodating asymmetric and signed network externalities. Leveraging this equilibrium characterization, we analyze targeted price discrimination within community-structured and influencer-driven markets. To this end, we introduce a generalized measure of network influence that extends classical Katz–Bonacich centrality beyond the Euclidean domain. Finally, addressing the challenge of unknown consumer utility functions, we develop a shape-constrained, tuning-parameter-free learning approach utilizing isotonic regression, for which we establish strict no-regret convergence guarantees. Supported by extensive simulations, our results seamlessly integrate equilibrium analysis and nonparametric learning into a cohesive monopoly pricing framework.
\end{abstract}



\section{Introduction and Related Work}

Consumers rarely make purchasing decisions in isolation. In modern markets, individual demand is shaped not only by prices and intrinsic preferences, but also by the consumption choices of peers. Social influence, peer adoption, congestion, exclusivity, and word-of-mouth effects create dependencies among consumers. Consequently, market demand is more accurately modeled as a joint equilibrium system rather than as a collection of independent, reduced-form demand curves.

This interconnectedness is central to monopoly pricing over networks. A seller with price-discrimination capabilities can maximize revenue by offering targeted discounts to influential consumers who drive substantial downstream adoption, while charging premiums to those with limited influence. While existing network-pricing models capture this mechanism, they often rely on restrictive structural assumptions. Specifically, they often dictate that utility functions must be linear-quadratic, and that the underlying network of influence is both symmetric and nonnegative.

Both restrictions are severely limiting in modern applications. Consumer responses to price changes are often nonlinear --- as seen in isoelastic, Stone-Geary, discrete-choice, and logit-type models. Furthermore, real-world networks are typically directed and exhibit both positive and negative externalities. A consumer may exert influence without being influenced in return, and peer adoption might enhance utility through peer effects or actively degrade it through congestion, scarcity, or snob effects.

In this paper, we study monopoly pricing over consumer networks governed by general nonlinear utilities. The seller posts discriminatory prices, and consumers determine their optimal consumption levels via a network game. While the network topology captures cross-consumer dependencies, the nonlinear utility functions capture individual-level demand curvature and preference heterogeneity. 

We develop a comprehensive equilibrium and pricing theory for this \textit{general nonlinear} model, explicitly accommodating \textit{directed (i.e., non-necessarily symmetric) and signed network topologies}. We establish conditions for the \textit{existence and uniqueness of the consumer equilibrium}, analyze how nonlinear demand responses shape optimal discriminatory prices, and map the resulting pricing rules to a generalized measure of network influence. Further, for practical settings where marginal utility functions are unknown, we introduce a shape-constrained \textit{learning approach} to efficiently estimate them. Overall, this paper unifies network game theory, nonlinear monopoly pricing, and nonparametric learning, demonstrating how classical centrality-based insights fundamentally extend beyond the restrictive linear-quadratic benchmark.

\textbf{Prior Work on Pricing in Networks.}
A key benchmark for this work is \cite{candogan2012optimal}, who study a monopolist pricing over a social network with \emph{positive} local externalities. Their analysis is based on a linear-quadratic utility specification to derive explicit discriminatory prices tied to Bonacich centrality. In contrast, our paper moves beyond the linear-quadratic setting by introducing a general class of convex utility functions $h_i$, allowing the induced marginal response map $\phi_i=h_i'$ to be nonlinear. This unified framework natively captures both standard linear-quadratic models and nonlinear specifications like logit-type and isoelastic demand. Further, we do not restrict attention to symmetric or nonnegative networks; we allow the interaction matrix $\mathbf G$ to be directed and signed, and our equilibrium theory is built around general contraction and variational conditions rather than a restricted single centrality-based closed form.

Subsequent works extend this network-pricing perspective but remain tied to specialized parametric environments. For instance, \cite{bloch2013pricing} characterizes centrality-based pricing under specific consumption and price externalities, \cite{chen2018competitive} explores competitive pricing for substitute goods, and \cite{du2016optimal} analyzes network effects strictly within a multinomial-logit demand system. Rather than deriving explicit formulas for another specialized model, we provide a unified equilibrium and pricing framework that accommodates a vastly broader class of utility curvatures and network topologies while preserving interpretable, centrality-driven pricing rules for consumers.

More recent literature has expanded the original \cite{candogan2012optimal} line of inquiry in other directions. For instance, \cite{li2025pricing} investigates monopoly pricing under \emph{negative} local externalities, emphasizing the role of the complement graph within a strict linear-quadratic setting. These negative network effects — often termed \emph{snob effects} \citep{navon1995product} — occur when a consumer's utility degrades as peer consumption increases, naturally arising in environments characterized by congestion or crowding \citep{kohlberg1983equilibrium}. In contrast, our unified framework simultaneously accommodates both positive and negative externalities via a signed interaction matrix $\mathbf G$ and fundamentally generalizes the demand structure to allow nonlinear marginal consumer responses $\phi_i$.


Finally, unlike the aforementioned literature, we address the critical challenge of model estimation. By recovering the unknown marginal response functions via isotonic regression, we provide a robust pipeline for computing revenue-maximizing prices, introducing a data-driven learning layer largely absent from classical network-pricing theory.

\textbf{Prior Work on Demand Learning.}
Our work also fundamentally differs from recent literature on \emph{online learning in sequential price competition} \cite{li2024lego,bracale2025revenue,bracale2025online,goyal2023learning}. These papers analyze oligopolistic environments where competing sellers learn macroscopic, \emph{seller-level demand functions} governed directly by the market price vector, typically assuming a single-index model specification. In such settings, the learning objective centers on inferring a reduced-form demand curve to optimize strategic updates in a multi-seller pricing game. 

In contrast, our work focuses on a \emph{single monopolist} pricing over a \emph{network of consumers}. The primitive objects are not seller-level demand curves, but consumer utilities $u_i(x_i,\mathbf x_{-i};p_i)$ as defined in \eqref{eq:general_utility_unified}. For any posted price vector $\mathbf p$, consumers strategically interact over the network to determine a joint equilibrium consumption profile $\mathbf x^\star(\mathbf p)$. Consequently, rather than learning reduced-form market demands, we estimate the underlying microeconomic primitives, allowing us to explicitly model how personalized pricing shapes the structural equilibrium of the network.



Consequently, our approach necessitates a fundamentally different notion of demand. While the aforementioned competitive pricing literature models aggregate demand as a direct scalar function of market prices, our framework treats demand as an \emph{endogenous equilibrium outcome}. Each consumer's demand $x_i^\star(\mathbf p)$ is driven not only by direct price effects but also indirectly through the structural network responses of their peers.

Finally, this work complements the broader literature on monopoly dynamic pricing without networks \citep{fan2024policy,javanmard2019dynamic,bracale2025dynamic,tullii2024improved}. Although these studies explore online learning under flexible demand models, they assume consumers act in \textit{isolation}. By endogenizing demand as the outcome of a strategic network game, we seamlessly bridge advanced nonlinear pricing theory with structural, micro-level consumer interactions.

The \textbf{key contributions} of the work are summarized as follows:
\begin{enumerate}[leftmargin=1.5em,noitemsep,topsep=0pt]
\item \textbf{A unified equilibrium framework with general utilities and flexible networks.} We establish a comprehensive monopoly pricing framework that strictly generalizes classical benchmarks along two key dimensions. First, we allow consumers to have heterogeneous utility specifications of the form in \eqref{eq:general_utility_unified}, where the convex regularizers $h_i$ vary across individuals. This seamlessly recovers standard models under a single umbrella, including the linear--quadratic specification \citep{ballester2006s,li2025price,chen2018competitive} and logit-type models \citep{chen2021duopoly,du2016optimal}. Second, we explicitly accommodate interaction matrices $\mathbf{G}$ that are \emph{directed} and \emph{signed}, extending models with symmetric and purely positive externalities \citep{candogan2012optimal} or negative  \citep{li2025pricing}. Under these highly flexible assumptions, we derive distinct \emph{contraction} and \emph{variational} conditions that guarantee the existence and uniqueness of the consumer equilibrium for any price vector. We theoretically bridge this to prior literature by proving that, in the classical symmetric and nonnegative setting, these distinct uniqueness conditions perfectly coincide and reduce to the familiar spectral threshold \citep{chen2018competitive,li2025price}.

\item \textbf{Price discrimination across communities and influencers.} We analyze how targeted price discrimination emerges across community structures and in the presence of influencers. We first show that pricing dynamics decompose naturally across independent communities. We then introduce a generalized definition of network influence that extends classical Katz--Bonacich centrality \citep{kohlberg1983equilibrium, li2025pricing} from the Euclidean domain to the non-Euclidean dual space induced by the marginal response map $\boldsymbol{\phi}$. Using this metric, we prove a strict lower bound on optimal prices for low-influence consumers, formally demonstrating that steep discounts are mathematically reserved exclusively for influencers.

\item \textbf{A tuning-free learning algorithm via isotonic estimation.} For settings where the customer utility functions are unknown, we develop a no-regret learning algorithm for the seller's revenue maximization problem. Our approach pairs price optimization with nonparametric isotonic estimation of the marginal response functions, yielding a fully data-driven, tuning-free pricing pipeline. To theoretically ground this method, we recover a finite-sample concentration inequality for the uniform error of the isotonic estimator — a result of independent statistical interest (Theorem~\ref{thm:Dumbgen}) — which allows us to rigorously bound the expected pricing regret.
\end{enumerate}

\textbf{General notation.} We denote matrices by bold uppercase letters, vectors by bold lowercase letters, and real-valued scalars by non-bold letters. For each integer $N$, $\mathbf{I}_N$ and $\mathbf{O}_N$ denote the $N \times N$ identity matrix and zero matrix, respectively. For a given $N \times N$ matrix $\mathbf{M}$, we define $\rho(\mathbf{M}):=\sup\{|\lambda|:\lambda\in\sigma(\mathbf{M})\}$, where $\sigma(\mathbf{M}):=\{\lambda\in\mathbb C:\mathbf{M}-\lambda \mathbf{I}_N
\text{ is not invertible}\}$. The value $\rho(\mathbf{M})$ and $\sigma(\mathbf{M})$ are also known as the spectral radius of $\mathbf{M}$ and spectrum of $\mathbf{M}$, respectively. We write $\mathbf{M} \succ \mathbf{M'}$ if $\mathbf{z}^{\top} \mathbf{M} \mathbf{z}>\mathbf{z}^{\top} \mathbf{M}'\mathbf{z}$ for all $\mathbf{z} \in \mathbb{R}^N$ with $\mathbf{z}\neq \mathbf{0}$.

\section{Problem Formulation}

\textbf{Model Setup.}  We consider a monopolist selling a product to a set of consumers $\mathcal{N} = \{1, \dots, N\}$ embedded in a social network. The network topology is represented by an adjacency matrix $\mathbf{G} \in \mathbb{R}^{N \times N}$ with zero diagonal ($g_{ii} = 0$), where $g_{ij}$ measures the influence of consumer $j$ on consumer $i$. Crucially, unlike previous works \citep{chen2018competitive}, we allow $\mathbf{G}$ to be both \emph{directed} (asymmetric) and \emph{signed} (permitting negative weights/externalities). The monopolist acts as the sole product provider with full price discrimination capabilities. Given the posted prices, each consumer $i \in \mathcal{N}$ chooses a consumption level $x_i$ from a feasible set $\mathcal{X}_i \subseteq \mathbb{R}_+$.

\textbf{Consumer objective.} Suppose the seller posts a price vector $\mathbf{p}=(p_1,\dots,p_N)^{\top} \in \mathcal{P}$, where $\mathcal{P} \subseteq \mathbb{R}^N$ is closed and convex, and $p_i$ is the discriminatory price offered to consumer $i$. Each consumer $i \in \mathcal{N}$ selects a consumption level $x_i \in \mathcal{X}_i$ to maximize their utility. The feasible set $\mathcal{X}_i$ is typically $[0,\infty)$, bounded as $[0, a]$, or $[0,1]$ when $x_i$ represents the probability of adoption. The utility function $u_i$ is defined as:
\begin{equation}\label{eq:general_utility_unified}
\textstyle  u_i(x_i,\mathbf{x}_{-i};p_i)
=
 s_i(\mathbf{x}_{-i};p_i) x_i- h_i(x_i), \qquad \text{where} \quad s_i(\mathbf{x}_{-i};p_i)\triangleq a_i-b_ip_i+\delta \sum_{j=1}^N g_{ij}x_j.
\end{equation}
Here, $a_i$ represents the intrinsic baseline utility, $b_i$ is the price sensitivity, and $h_i: \mathcal{X}_i \to \mathbb{R}$ is a convex function capturing decreasing marginal returns. The term $\delta (\mathbf{G} \mathbf{x})_i = \delta \sum_{j=1}^N g_{ij}x_j$ encapsulates the local network spillover, with $\delta \ge 0$ scaling the overall intensity of peer influence. Because $g_{ij}$ can take any sign, the model natively captures both positive and negative externalities: the cross-partial derivative $\frac{\partial^2 u_i}{\partial x_i \partial x_j} = \delta g_{ij}$ explicitly shows that higher consumption by peer $j$ can either increase ($g_{ij} > 0$) or decrease ($g_{ij} < 0$) consumer $i$'s marginal utility.

\textbf{Seller objective.} Anticipating the network equilibrium $\mathbf{x}^\star(\mathbf{p})$ generated by the consumers, the monopolist seeks to determine a price vector $\mathbf{p}^\star \in \mathcal{P}$ that maximizes total profit. The seller's optimization problem aims to maximize revenue:
\begin{equation}\label{eq:Opt-prob}
  \textstyle  \mathbf{p}^\star \in \arg\max_{\mathbf{p} \in \mathcal{P}} r(\mathbf{p}), \qquad \text{where} \quad r(\mathbf{p}) \triangleq \mathbf{p}^\top \mathbf{x}^\star(\mathbf{p}).
\end{equation}

\begin{remark}\label{remark:classical-models}
The formulation in \eqref{eq:general_utility_unified} captures a broad class of canonical utility models via the choice of the function $h_i(x)$. For instance, it natively recovers the \emph{Linear--Quadratic} model ($h_i(x)=\frac{1}{2}x^2$) for standard goods \citep{ballester2006s,li2025price,chen2018competitive} and the \emph{Discrete Choice/Logit} model ($h_i(x)=x\ln x+(1-x)\ln(1-x)$) for adoption probabilities \citep{chen2021duopoly,du2016optimal}. Further, as detailed in Appendix~\ref{ex:utilities-covered}, this framework seamlessly accommodates several other prominent specifications, including \emph{Stone--Geary} (subsistence goods), \emph{Exponential} (risk aversion), and \emph{Isoelastic} (Box--Cox) utility models.
\end{remark}

Relaxing the linear--quadratic assumption substantially transforms the seller's pricing problem. As shown in Appendix~\ref{app:linear-logistic}, the standard linear--quadratic model harbors a strict invariance property: for any symmetric, nonnegative $\mathbf G$, optimal monopoly prices are decoupled from the network topology, yielding $p_i^\star=\frac{a_i}{2b_i}$, $i\in\mathcal N$. This decoupling, however, is a brittle artifact of linearity. Under general nonlinear utilities, this invariance breaks down, meaning the network topology actively dictates both the equilibrium consumption and the seller's optimal pricing strategy.

\subsection{Consumers Nash Equilibrium}

For fixed prices $p_i$ and peer consumption $\mathbf{x}_{-i}$, we define consumer $i$'s \textit{best response} as the set:
\begin{equation}\label{eq:general_BR_map_monopolostic}
\textstyle    \Gamma_i(\mathbf{x}_{-i};p_i) \triangleq \arg\max_{x_i\in\mathcal{X}_i} u_i(x_i,\mathbf{x}_{-i};p_i).
\end{equation}

\begin{definition}\label{def:NE}
For every $\mathbf{p} \in \mathcal{P}$, a \textit{Nash Equilibrium} (NE) is a consumption vector 
$$
\mathbf{x}^{\star}(\mathbf{p}) = (x_1^{\star}(\mathbf{x}_{-1}^{\star},p_1),\dots, x_N^{\star}(\mathbf{x}_{-N}^{\star},p_N))\in\mathcal{X}, \quad 
x_i^{\star}(\mathbf{x}_{-i}^{\star},p_i) \in \Gamma_i(\mathbf{x}^\star_{-i};p_i), \quad \forall i\in\mathcal{N}.
$$
When each $h_i$ is continuously differentiable, 
this condition is equivalently characterized by the variational inequality 
$$ \text{for every }\mathbf{p} \in \mathcal{P}, \quad
\left \langle h_i'(x_i^{\star}) -s_i(\mathbf{x}_{-i}^{\star};p_i), y_i-x_i^{\star} \right \rangle \geq 0, \quad \forall y_i \in \mathcal{X}_i, \quad \forall i \in \mathcal{N}.
$$
\end{definition}
For the remainder of the paper, we assume that the consumer game admits a NE. This is a mild assumption, as existence holds under standard weak conditions for both unbounded (e.g., linear--quadratic) and bounded (e.g., discrete-choice model) feasible sets. The following theorem establishes one such general existence guarantee.

\begin{theorem}[Existence of Nash equilibrium]
\label{thm:general_competitive_NE_existence_monopolist}
Assume that $h_i$ is continuously differentiable for each $i \in \mathcal{N}$ and one of the following holds: \\
\textbf{1. Compactness:} $\forall i \in \mathcal{N}$, $\mathcal{X}_i\subseteq \mathbb R$ is nonempty convex and compact set. \\
\textbf{2. Coercivity:} $\forall i \in \mathcal{N}$, $\mathcal{X}_i\subseteq \mathbb R$ is nonempty, convex and closed, and $\lim_{x_i \in \mathcal{X}_i,|x_i|\to\infty}\tfrac{h_i(x_i)}{|x_i|}= +\infty$. \\
Then, the  game induced by the best-response mapping \eqref{eq:general_BR_map_monopolostic} admits at least one Nash equilibrium.
\end{theorem}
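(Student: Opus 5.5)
The plan is to reduce existence to a standard fixed-point or variational-inequality argument. The key structural fact is that each $u_i(\cdot,\mathbf{x}_{-i};p_i)=s_i(\mathbf{x}_{-i};p_i)x_i-h_i(x_i)$ is concave in $x_i$, since $h_i$ is convex and the first term is linear. It is also jointly continuous in $(x_i,\mathbf{x}_{-i})$, because $s_i$ is affine in $\mathbf{x}_{-i}$ and $h_i$ is $C^1$. Fix $\mathbf{p}\in\mathcal{P}$ throughout.

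\textbf{Compactness case.} Here $\mathcal{X}=\prod_i\mathcal{X}_i$ is a nonempty, convex, compact subset of $\mathbb{R}^N$. I would invoke the Debreu--Glicksberg--Fan theorem directly, or equivalently Kakutani's theorem applied to the product best-response correspondence $\Gamma(\mathbf{x})=\prod_i\Gamma_i(\mathbf{x}_{-i};p_i)$. Three facts are needed.
\begin{itemize}
\item Each $\Gamma_i$ is nonempty, by Weierstrass on the compact set $\mathcal{X}_i$.
\item Each $\Gamma_i$ is convex-valued, because the argmax of a concave function over a convex set is convex.
\item $\Gamma$ has a closed graph, by Berge's maximum theorem, since the constraint set $\mathcal{X}_i$ does not depend on $\mathbf{x}_{-i}$ and $u_i$ is jointly continuous.
\end{itemize}
Kakutani then yields $\mathbf{x}^\star\in\Gamma(\mathbf{x}^\star)$, which is a NE in the sense of Definition~\ref{def:NE}.

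\textbf{Coercivity case.} The main obstacle is that $\mathcal{X}$ may be unbounded and the network term can push consumption upward. I would truncate and then show that the truncation is not binding. Coercivity gives, for every $M>0$, a radius $R_i(M)$ such that $h_i(x)\ge M|x|$ for all $x\in\mathcal{X}_i$ with $|x|\ge R_i(M)$.

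\emph{Step 1.} Take $R>0$ large and set $\mathcal{X}_i^R=\mathcal{X}_i\cap[-R,R]$, which is compact and convex, and nonempty once $R$ is large. The compactness case gives an equilibrium $\mathbf{x}^R$ of the truncated game.

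\emph{Step 2.} A uniform a priori bound on $\mathbf{x}^R$ is needed, and this is where the difficulty lies. The marginal signal satisfies $|s_i(\mathbf{x}_{-i};p_i)|\le |a_i|+|b_ip_i|+\delta\|\mathbf{G}\|_\infty\|\mathbf{x}\|_\infty$. This grows linearly in $\|\mathbf{x}\|_\infty$, while coercivity only says $h_i$ grows superlinearly in $x_i$ alone, so a naive bound is circular. Two routes are available.
\begin{itemize}
\item \emph{Variational route.} Use the variational-inequality formulation: find $\mathbf{x}\in\mathcal{X}$ with $\langle F(\mathbf{x}),\mathbf{y}-\mathbf{x}\rangle\ge0$, where $F_i(\mathbf{x})=h_i'(x_i)-s_i(\mathbf{x}_{-i};p_i)$. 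Then apply the Hartman--Stampacchia theorem on the balls $\mathcal{X}\cap B_R$, together with the standard coercivity criterion (Kinderlehrer--Stampacchia, or Facchinei--Pang Prop.~2.2.3). The criterion is that some $\mathbf{y}_0$ satisfies $\langle F(\mathbf{x}),\mathbf{x}-\mathbf{y}_0\rangle>0$ for all $\|\mathbf{x}\|=R$ large; this forces the solution on $\mathcal{X}\cap B_R$ to lie strictly inside $B_R$, hence to solve the original problem. To verify it, I would use convexity, which gives $h_i'(x_i)(x_i-y_{0,i})\ge h_i(x_i)-h_i(y_{0,i})$, so that $\sum_i h_i'(x_i)(x_i-y_{0,i})$ grows superlinearly in $\|\mathbf{x}\|$. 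The remaining term $\langle a-Bp+\delta\mathbf{G}\mathbf{x},\mathbf{x}-\mathbf{y}_0\rangle$ is only quadratic.
\item \emph{Best-response route.} Alternatively, argue directly that the best response satisfies $|x_i^R|\le R_i(M)$ whenever $|s_i|\le M$, and choose $R$ so that $R\ge\max_i R_i(c+\delta\|\mathbf{G}\|_\infty R)$. Superlinearity makes $R_i(M)=o(M)$ only in a weak sense, so I would check this choice carefully.
\end{itemize}
Superlinearity alone need not dominate a quadratic term, so if needed I would read coercivity as dominating the linear-growth interaction on the relevant region, which is the case for the paper's examples. The quadratic case is delicate and is exactly where the spectral condition on $\delta\mathbf{G}$ appears later.

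\emph{Step 3.} Once the bound $\|\mathbf{x}^R\|_\infty<R$ holds, interior optimality on the truncated set together with concavity implies global optimality on $\mathcal{X}_i$. Hence $\mathbf{x}^R$ is a NE of the original game.
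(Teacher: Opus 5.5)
\textbf{The compactness case is fine; the coercivity case has a gap that cannot be closed, because that half of the statement is false.} In the compact case, convexity of $h_i$, Weierstrass and Berge's theorem give a Kakutani fixed point. The paper instead cites a variational-inequality existence theorem (Theorem~2.30 of \cite{tsekrekos2024variational}); on a compact convex $\mathcal X$ either route works.

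The gap is Step~2 of the coercivity case, and no choice of route fixes it. The hypothesis controls each $h_i$ only in its own coordinate. The spillover $\delta\mathbf G\mathbf x$, however, feeds back linearly in the whole profile, so even quadratic growth of $h_i$ need not dominate it.

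Here is a counterexample. Take $N=2$, $\mathcal X_1=\mathcal X_2=[0,\infty)$, $h_i(x)=\tfrac12x^2$ (coercive, and even $1$-strongly convex), $\mathbf G=\left(\begin{smallmatrix}0&1\\1&0\end{smallmatrix}\right)$, $\delta\ge1$, and prices with $a_i-b_ip_i=1$ (e.g.\ $a_i=b_i=1$, $p_i=0$). Since $s_i\ge1>0$ on $\mathcal X$, we get $\Gamma_i(x_j)=1+\delta x_j$. An equilibrium must therefore solve $x_1=1+\delta x_2$ and $x_2=1+\delta x_1$. This system is inconsistent for $\delta=1$, and for $\delta>1$ it forces $x_1=x_2=1/(1-\delta)<0$. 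So no Nash equilibrium exists.

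Two consequences follow. First, your fallback of reading coercivity as ``dominating the interaction'' changes the hypothesis; it is not a proof. Second, your remark that the paper's examples satisfy that reading is wrong for the paper's own linear--quadratic example, as just shown. The paper's one-line citation cannot rescue this case either, since the statement itself fails. Coercive-type VI existence theorems need coercivity of the full operator $F(\mathbf x)=\boldsymbol{\phi}(\mathbf x)-\delta\mathbf G\mathbf x-(\mathbf a-\mathbf B\mathbf p)$, and the hypothesis on the $h_i$ alone does not give that.

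\textbf{What is missing is a hypothesis tying curvature to network strength; with one, your own arguments go through.}
\begin{itemize}
\item \emph{Variational condition.} Under Assumption~\ref{ass:strong-convexity}, set $\lambda:=\lambda_{\min}\bigl(\mathbf D-\delta\tfrac{\mathbf G+\mathbf G^\top}{2}\bigr)>0$. Then $\langle F(\mathbf x),\mathbf x-\mathbf y_0\rangle\ge\lambda\|\mathbf x-\mathbf y_0\|_2^2-\|F(\mathbf y_0)\|_2\|\mathbf x-\mathbf y_0\|_2>0$ for $\|\mathbf x\|_2$ large. This is exactly the criterion your variational route needs.
\item \emph{Contraction condition.} Banach's theorem on the closed, hence complete, set $\mathcal X$ gives existence directly, with no truncation.
\item \emph{Growth condition for your best-response route.} Assume $\liminf_{|x|\to\infty,\,x\in\mathcal X_i}h_i'(x)/x>\delta\sum_j|g_{ij}|$. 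Then you can choose $R$ with $h_i'(R)>|a_i-b_ip_i|+\delta R\sum_j|g_{ij}|$, and symmetrically at $-R$ if $\mathcal X_i$ is unbounded below. Every best response to a profile in $\mathcal X\cap[-R,R]^N$ then lies in $(-R,R)$, so Kakutani on that box gives an equilibrium of the original game.
\end{itemize}
Superlinear growth of $h_i(x_i)/|x_i|$ by itself implies none of these conditions.
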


The proof of Theorem~\ref{thm:general_competitive_NE_existence_monopolist} follows directly from Theorem 2.30 in \cite{tsekrekos2024variational}, leveraging their variational inequality framework.

\begin{remark}
The canonical utility functions discussed in Appendix~\ref{ex:utilities-covered} satisfy the assumptions of Theorem~\ref{thm:general_competitive_NE_existence_monopolist}. The Discrete-Choice model guarantees existence via its compact domain $\mathcal{X}_i=[0,1]$. For unbounded specifications (Linear--Quadratic, Stone--Geary, Exponential, and Isoelastic), existence holds because of the strict convexity and coercivity of their respective $h_i$ functions.
\end{remark}

\subsection{Uniqueness of Nash equilibrium}

Assuming the strong convexity of the $h_i$ functions, we establish equilibrium uniqueness via two independent analytical routes: a Banach contraction argument and a variational inequality approach. These methods yield distinct uniqueness conditions that do not generally imply each other. Notably, however, we show that these two criteria collapse into a single, unified condition precisely when $\mathbf{G}$ is symmetric and entry-wise non-negative.

\begin{assumption}\label{ass:strong-convexity}
Assume that $h_i$ is continuously differentiable and $\mu_i$-strongly convex with respect to the Euclidean norm $|\cdot |$; i.e.,  for all $x,y\in \mathbb{R}$, $\textstyle h_i(y)\ge h_i(x)+  h'_i(x) \cdot (y-x) + \frac{\mu_i}{2} |y-x |^2$.
\end{assumption}

\begin{remark} \label{rem:BR_fixed_point_characterization}
Because $h_i$ is strongly convex, the consumer's best response \eqref{eq:general_BR_map_monopolostic} is uniquely defined, rendering $\Gamma_i$ single-valued. For differentiable $h_i$, letting $\phi_i(x_i) \triangleq h_i'(x_i)$, the unconstrained first-order condition is:
\begin{equation}\label{FOC:consumer}
 \textstyle   a_i - b_ip_i + \delta\sum_{j=1}^N g_{ij}x_j - \phi_i(x_i) = 0, \qquad \forall i \in \mathcal{N}.
\end{equation}
Since strict convexity implies $\phi_i$ is invertible, the constrained best response on the interval $\mathcal{X}_i$ is simply the projection $\Gamma_i(\mathbf{x}_{-i}; p_i) = \Pi_{\mathcal{X}_i} \{ \phi_i^{-1}(a_i - b_ip_i + \delta\sum_{j=1}^N g_{ij}x_j) \}$. Stacking these equations yields the global first-order condition:
\begin{equation}\label{FOC:consumer-global}
    \mathbf{F}(\mathbf{x}) = -\mathbf{B}\mathbf{p}, \qquad \text{where} \quad \mathbf{F}(\mathbf{x}) \triangleq \mathbf{a} - \boldsymbol{\phi}(\mathbf{x}) + \delta \mathbf{G}\mathbf{x},
\end{equation}
with $\boldsymbol{\phi}(\mathbf{x}) \triangleq (\phi_1(x_1), \dots, \phi_N(x_N))^\top$ and $\mathbf{B} \triangleq \operatorname{diag}(b_1, \dots, b_N)$. Consequently, the joint best-response mapping is $\boldsymbol{\Gamma}(\mathbf{x}; \mathbf{p}) = \Pi_{\mathcal{X}} \{ \boldsymbol{\phi}^{-1}(\mathbf{a} - \mathbf{B}\mathbf{p} + \delta \mathbf{G}\mathbf{x}) \}$. The consumer Nash equilibrium $\mathbf{x}^\star(\mathbf{p})$ is precisely the fixed point of $\boldsymbol{\Gamma}(\cdot; \mathbf{p})$, which is unique under appropriate conditions on $\delta$ and $\mathbf{G}$.
\end{remark}

\begin{theorem}[Uniqueness of Nash Equilibrium]
\label{thm:uniqueness_NE}
Under the hypotheses of Theorem~\ref{thm:general_competitive_NE_existence_monopolist} and Assumption~\ref{ass:strong-convexity}, let $|\mathbf{G}|$ denote the entry-wise absolute value of $\mathbf{G}$, and define $\mathbf{D} \triangleq \operatorname{diag}(\mu_1, \dots, \mu_N)$. For every $\mathbf{p} \in \mathcal{P}$, the game admits a unique Nash equilibrium $\mathbf{x}^\star(\mathbf{p})$ if either of the following holds:
\[
    \textbf{Contraction Condition:} \quad \rho(\delta \mathbf{D}^{-1} |\mathbf{G}|) < 1 \qquad \text{or} \qquad \textbf{Variational Condition:} \quad \mathbf{D} - \delta \tfrac{\mathbf{G} + \mathbf{G}^\top}{2} \succ 0.
\]
\end{theorem}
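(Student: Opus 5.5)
Existence is already supplied by Theorem~\ref{thm:general_competitive_NE_existence_monopolist}, so only uniqueness needs an argument, and each of the two conditions gets its own proof.

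\textbf{Contraction route.} We will show that the best-response map $\boldsymbol{\Gamma}(\cdot;\mathbf p)$ of Remark~\ref{rem:BR_fixed_point_characterization} is a contraction in a suitably weighted norm. By Assumption~\ref{ass:strong-convexity}, $\phi_i$ is strongly monotone: $(\phi_i(x)-\phi_i(y))(x-y)\ge \mu_i|x-y|^2$. Consequently $\phi_i^{-1}$ is $(1/\mu_i)$-Lipschitz, and the projection onto the interval $\mathcal X_i$ is nonexpansive. For two profiles $\mathbf x,\mathbf y$ this gives the componentwise bound
\[
|\Gamma_i(\mathbf x)-\Gamma_i(\mathbf y)| \le \tfrac{\delta}{\mu_i}\sum_j |g_{ij}|\,|x_j-y_j|,
\qquad\text{that is,}\qquad
|\boldsymbol\Gamma(\mathbf x)-\boldsymbol\Gamma(\mathbf y)|\le \mathbf M|\mathbf x-\mathbf y|,
\]
with $\mathbf M=\delta\mathbf D^{-1}|\mathbf G|\ge 0$. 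Since $\rho(\mathbf M)<1$, there is a monotone vector norm in which $\mathbf M$ has operator norm below one. Concretely, for any $\varepsilon>0$ with $\rho(\mathbf M)+\varepsilon<1$, take a strictly positive vector $\mathbf w$ with $\mathbf M\mathbf w\le(\rho(\mathbf M)+\varepsilon)\mathbf w$. Such a $\mathbf w$ exists by Perron--Frobenius applied to $\mathbf M+\varepsilon\mathbf 1\mathbf 1^\top$, or alternatively via the Neumann series $\mathbf w=(\mathbf I-\mathbf M/(\rho+\varepsilon))^{-1}\mathbf 1$. The weighted max-norm $\|\mathbf z\|_{\mathbf w}=\max_i |z_i|/w_i$ is then monotone, and $\boldsymbol\Gamma$ is a strict contraction in it. Two fixed points $\mathbf x,\mathbf y$ would therefore satisfy $\|\mathbf x-\mathbf y\|_{\mathbf w}\le(\rho+\varepsilon)\|\mathbf x-\mathbf y\|_{\mathbf w}$, which forces $\mathbf x=\mathbf y$. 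Banach's theorem on the closed set $\mathcal X$ also recovers existence, but we do not need it.

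\textbf{Variational route.} Write the equilibrium as the solution of $\mathrm{VI}(\mathcal X,\mathbf T)$ with $\mathbf T(\mathbf x)=\boldsymbol\phi(\mathbf x)-\delta\mathbf G\mathbf x-\mathbf a+\mathbf B\mathbf p$, using Definition~\ref{def:NE} summed over $i$; since $\mathcal X$ is a product set, the per-coordinate and summed inequalities are equivalent. Suppose $\mathbf x,\mathbf y$ are two solutions. Testing the inequality for $\mathbf x$ at $\mathbf y$ and the one for $\mathbf y$ at $\mathbf x$, and adding, gives $\langle \mathbf T(\mathbf x)-\mathbf T(\mathbf y),\mathbf x-\mathbf y\rangle\le 0$. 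On the other hand, with $\mathbf z=\mathbf x-\mathbf y$,
\[
\langle \mathbf T(\mathbf x)-\mathbf T(\mathbf y),\mathbf z\rangle \ge \sum_i\mu_i z_i^2-\delta\mathbf z^\top\mathbf G\mathbf z=\mathbf z^\top\Big(\mathbf D-\delta\tfrac{\mathbf G+\mathbf G^\top}{2}\Big)\mathbf z .
\]
This is strictly positive unless $\mathbf z=\mathbf 0$. Hence $\mathbf z=\mathbf 0$, i.e. $\mathbf T$ is strongly monotone and the solution is unique.

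The only delicate step is in the contraction route, in the passage from a spectral-radius bound to a genuine norm contraction when $|\mathbf G|$ may be reducible. Perturbing to $\mathbf M+\varepsilon\mathbf 1\mathbf 1^\top$, which is positive, handles this cleanly, because $\rho$ is continuous in the entries. The Lipschitz bound on $\phi_i^{-1}$ also requires a little care: it needs $\phi_i$ to be onto $\mathbb R$, or else the best response must be read directly as the maximizer of a $\mu_i$-strongly concave function of a shifted linear term. In the latter case, one compares the first-order optimality conditions at the two shifts $s$ and $s'$ to get $\mu_i|x-x'|\le|s-s'|$ without inverting $\phi_i$.
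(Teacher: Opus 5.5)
Your proposal is correct and follows the paper's proof essentially step for step. For the contraction condition, the paper also proves the $1/\mu_i$-Lipschitz bound on the best response via first-order optimality conditions (your fallback), derives $|\boldsymbol{\Gamma}(\mathbf x;\mathbf p)-\boldsymbol{\Gamma}(\tilde{\mathbf x};\mathbf p)|\le \mathbf M|\mathbf x-\tilde{\mathbf x}|$, builds the weight $\mathbf w=(\alpha\mathbf I_N-\mathbf M)^{-1}\mathbf 1$ with $\alpha\in(\rho(\mathbf M),1)$ by a Neumann series (your second construction, up to scaling), and applies Banach in the weighted sup norm; for the variational condition, it adds the two VI inequalities and uses $\mathbf z^\top\bigl(\mathbf D-\delta\tfrac{\mathbf G+\mathbf G^\top}{2}\bigr)\mathbf z>0$, taking existence from Theorem~\ref{thm:general_competitive_NE_existence_monopolist}, exactly as you do.
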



This terminology is deliberate: the \emph{contraction condition} applies the Banach contraction theorem to the best-response mapping, while the \emph{variational condition} stems from the variational inequality formulation of the consumer game.

Theorem~\ref{thm:uniqueness_NE} unifies two distinct uniqueness criteria: Theorem~\ref{thm:general_competitive_NE_uniqueness_monopolist} under the contraction condition $\rho(\delta\mathbf{D}^{-1}|\mathbf{G}|)<1$, and Theorem~\ref{thm:general_competitive_NE_uniqueness_VI} under the variational condition $\mathbf{D}-\delta\tfrac{\mathbf{G}+\mathbf{G}^\top}{2}\succ \mathbf{O}_N$. Formal derivations are deferred to Appendix~\ref{appendix:uniqueness}, allowing us to focus our discussion on their interpretation.

Theorem~\ref{thm:uniqueness_NE} combines two distinct proof strategies. On the one hand, we analyze the joint best-response map and prove uniqueness via Banach's contraction theorem, overcoming the mismatch between the spectral-radius condition and standard norm bounds through an ad-hoc weighted sup norm construction. On the other hand, we reformulate the consumer game as a variational inequality and obtain uniqueness from the strict monotonicity of the associated operator, which yields a condition involving only the symmetric part of~$\mathbf G$.

\begin{remark}[Economic Interpretation]
Per the first-order condition \eqref{FOC:consumer}, consumer choice balances individual utility curvature ($\phi_i(x_i)$) against aggregate network effects ($\delta\sum_j g_{ij}x_j$). Both uniqueness criteria require this individual curvature to strictly dominate network effects, albeit in different ways. \textbf{Contraction Condition:} ensures network effects can not trigger amplifying feedback loops. Because it relies on the absolute magnitude $|\mathbf{G}|$, it is a conservative bound that ignores potential cancellations between positive and negative peer effects. \textbf{Variational Condition:} depends exclusively on the symmetric component $\frac{\mathbf{G}+\mathbf{G}^\top}{2}$. It bounds only the mutually reinforcing interactions rather than the worst-case absolute spillovers, ensuring network effects shift baseline consumption without fracturing the game into multiple equilibria. 

Section~\ref{remark:Gershgorin} derives practical sufficient conditions for both bounds via the Gershgorin circle theorem.
\end{remark}

As demonstrated in Examples~\ref{example_complete_Linear} (linear-quadratic utility model)
and~\ref{example_complete_Discrete} (discrete choice model) in Appendix \ref{subsec:contraction-variational-coincide}, the contraction and variational conditions perfectly coincide when $\mathbf{G}$ is symmetric and entry-wise non-negative. In this restricted setting, our dual criteria collapse into the precise spectral thresholds established by \cite{ballester2006s} and \cite{li2025price} (Assumption 2). Consequently, because Theorem~\ref{thm:uniqueness_NE} accommodates arbitrary asymmetry and negative peer effects, it strictly generalizes these classical uniqueness results.

\begin{theorem}[Symmetric Nonnegative Networks]\label{thm:comparison}
If $\mathbf{G}$ is symmetric and entry-wise non-negative, the contraction and variational conditions are equivalent, with both reducing to $\delta \rho(\mathbf{D}^{-1}\mathbf{G}) < 1$.
\end{theorem}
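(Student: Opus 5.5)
Since $\mathbf G$ is entry-wise non-negative, $|\mathbf G|=\mathbf G$, so the contraction condition of Theorem~\ref{thm:uniqueness_NE} reads $\delta\rho(\mathbf D^{-1}\mathbf G)<1$ directly. It remains to show that, for symmetric non-negative $\mathbf G$ and $\delta\ge 0$, the variational condition $\mathbf D-\delta\tfrac{\mathbf G+\mathbf G^\top}{2}=\mathbf D-\delta\mathbf G\succ 0$ is equivalent to $\delta\rho(\mathbf D^{-1}\mathbf G)<1$. The plan is to symmetrize $\mathbf D^{-1}\mathbf G$ by a diagonal similarity and then use Perron--Frobenius to identify the spectral radius with the largest eigenvalue.

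First I would set $\mathbf M\triangleq \mathbf D^{-1/2}\mathbf G\mathbf D^{-1/2}$, which is well defined because $\mu_i>0$. It is symmetric and entry-wise non-negative, and $\mathbf D^{-1}\mathbf G=\mathbf D^{-1/2}\mathbf M\mathbf D^{1/2}$. Hence $\mathbf D^{-1}\mathbf G$ and $\mathbf M$ have the same spectrum, so $\rho(\mathbf D^{-1}\mathbf G)=\rho(\mathbf M)$. Next, congruence by the invertible matrix $\mathbf D^{-1/2}$ preserves positive definiteness:
$$\mathbf D-\delta\mathbf G\succ 0 \iff \mathbf I_N-\delta\mathbf M\succ 0 \iff \delta\lambda_{\max}(\mathbf M)<1.$$
The last equivalence holds because $\mathbf M$ is real symmetric and hence orthogonally diagonalizable. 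Concretely, $\mathbf z^\top(\mathbf D-\delta\mathbf G)\mathbf z=\mathbf w^\top(\mathbf I_N-\delta\mathbf M)\mathbf w$ with $\mathbf w=\mathbf D^{1/2}\mathbf z$.

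The key step is $\lambda_{\max}(\mathbf M)=\rho(\mathbf M)$. By the Perron--Frobenius theorem for non-negative matrices (the weak version, which needs no irreducibility), $\rho(\mathbf M)$ is itself an eigenvalue of $\mathbf M$. Since all eigenvalues of $\mathbf M$ are real and bounded in absolute value by $\rho(\mathbf M)$, we get $\lambda_{\max}(\mathbf M)=\rho(\mathbf M)$.

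If one prefers to avoid citing Perron--Frobenius, a direct Rayleigh-quotient argument works. Let $\mathbf v$ be a unit eigenvector for an eigenvalue $\lambda$ with $|\lambda|=\rho(\mathbf M)$. Non-negativity of the entries gives
$$|\mathbf v|^\top\mathbf M|\mathbf v|\ \ge\ |\mathbf v^\top\mathbf M\mathbf v|=\rho(\mathbf M),$$
so $\lambda_{\max}(\mathbf M)\ge\rho(\mathbf M)$; the reverse inequality is trivial. This is the only nontrivial step, and it is exactly where non-negativity is used. A signed $\mathbf G$ could have its dominant eigenvalue negative, and then the variational condition would be strictly weaker than the contraction condition.

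Combining the steps gives, for $\delta\ge0$,
$$\rho(\delta\mathbf D^{-1}|\mathbf G|)<1\iff\delta\rho(\mathbf M)<1\iff\delta\lambda_{\max}(\mathbf M)<1\iff\mathbf D-\delta\mathbf G\succ0.$$
The case $\delta=0$ is immediate, since then both conditions hold trivially.
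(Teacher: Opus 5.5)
Your proposal is correct and follows the paper's own route: set $|\mathbf G|=\mathbf G$, pass to $\mathbf D^{-1/2}\mathbf G\mathbf D^{-1/2}$ by congruence (which preserves positive definiteness) and by similarity (which preserves the spectral radius), then identify $\lambda_{\max}$ with $\rho$. You justify that last identity through non-negativity, via Perron--Frobenius or the bound $|\mathbf v|^\top\mathbf M|\mathbf v|\ge|\mathbf v^\top\mathbf M\mathbf v|$, and this is the correct reason; the paper instead attributes it to symmetry alone, which would not suffice.
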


\begin{remark}[Contraction vs. Variational Condition]
\label{rem:VI_condition_importance_monopolist}
While Theorem~\ref{thm:comparison} establishes equivalence for symmetric, entry-wise non-negative networks, the two criteria are generally incomparable otherwise. Assuming $\mathbf{D} = \mathbf{I}_2$ and $V>0$, the directed network $\mathbf{G} = \left(\begin{smallmatrix} 0 & V\\ 0 & 0 \end{smallmatrix}\right)$ yields $\rho(|\mathbf{G}|) = 0$ (satisfying contraction for any $\delta>0$), but restricts the variational condition to $\delta < 2/V$. Conversely, the skew-symmetric network $\mathbf{G} = \left(\begin{smallmatrix} 0 & V\\ -V & 0 \end{smallmatrix}\right)$ yields $\tfrac{\mathbf{G}+\mathbf{G}^\top}{2} = \mathbf{O}_2$ (satisfying the variational condition for any $\delta>0$) but restricts contraction to $\delta < 1/V$. Thus, neither bound universally dominates the other. Further examples are provided in Appendix~\ref{sec:contactio_variational_examples}.
\end{remark}

\section{Price discrimination across communities and influencers} \label{sec:influencers}

In this section, we examine two topological settings where network effects naturally induce price discrimination: community structures and influencers.

\textbf{Communities.}
Suppose that, up to node reordering, the interaction matrix is block-diagonal: $\mathbf{G} = \operatorname{diag}(\mathbf{G}^{(1)}, \dots, \mathbf{G}^{(K)})$, where each block $\mathbf{G}^{(k)}$ represents an isolated community. Because there are no cross-community spillovers, both the consumer equilibrium and the overarching monopoly pricing problem decompose additively (as formally proven in Appendix~\ref{app:examples_of_networks}). Consequently, the optimal price $p_i$ for any consumer $i$ in community $k$ depends exclusively on the local network structure $\mathbf{G}^{(k)}$ and the local parameters $(a_j, b_j, h_j)$ of that specific block. This implies the monopolist will optimally price discriminate across communities whenever they differ in size, density, or spectral properties. Notably, two consumers with identical individual parameters $(a_i, b_i, h_i)$ may face entirely different prices simply by residing in different communities.



\textbf{Influencers.}
As demonstrated in Appendix~\ref{app:more_examples}, a monopolist optimally assigns lower prices to consumers who exert stronger influence on the network. To formalize this, we derive a structural measure of influence. Assuming an interior equilibrium $\mathbf{x}^\star(\mathbf{p}) \in \mathring{\mathcal{X}}$ (assumption widely used in the literature \citep{li2025price,chen2021duopoly,candogan2012optimal}), implicitly differentiating \eqref{FOC:consumer-global} yields the price sensitivity of demand:
\[
    \nabla_{\mathbf{p}}\mathbf{x}^\star(\mathbf{p}) = -\mathbf{J}(\mathbf{x}^\star(\mathbf{p}))^{-1} \mathbf{B}, \qquad \text{where} \quad \mathbf{J}(\mathbf{x}) \triangleq D_{\boldsymbol{\phi}}(\mathbf{x}) - \delta \mathbf{G}.
\]
Because the Jacobian $\mathbf{J}$ depends exclusively on the network strength $\delta\mathbf{G}$ and utility curvature $\boldsymbol{\phi}$, and \textit{not on prices}, the sum of the $i$-th column of $\mathbf{J}^{-1}$ naturally captures consumer $i$'s intrinsic power to influence the broader network. 

\begin{definition}[Intrinsic Influential Value (IIV)]
Given a network $\mathbf{G}$ with strength $\delta$ and utility curvature $\boldsymbol{\phi}$, the \emph{intrinsic influential value} (or \emph{generalized Katz--Bonacich centrality}) of consumer $i \in \mathcal{N}$ is defined as:
$V_i \triangleq \sup_{\mathbf{x} \in \mathcal{X}} [\mathbf{1}^\top \mathbf{J}(\mathbf{x})^{-1}]_i$.
This isolates consumer $i$'s structural influence across the network, independent of specific consumption levels or prices.
\end{definition}

When $\mathbf{G}$ is entry-wise nonnegative ($\mathbf{G}=|\mathbf{G}|$) and the contraction condition $\rho(\delta \mathbf{D}^{-1}\mathbf{G}) < 1$ holds, $\mathbf{J}(\mathbf{x})^{-1}$ is nonnegative, meaning $V_i \ge 0$. A higher $V_i$ indicates greater network influence. Proposition~\ref{prop:v_i-equality} provides a clean analytical bound for this value utilizing the utility curvature parameter $\mathbf{D} = \operatorname{diag}(\mu_1, \dots, \mu_N)$.

\begin{proposition}\label{prop:v_i-equality}
Suppose Assumption~\ref{ass:strong-convexity} holds, $\mathbf{G}=|\mathbf{G}|$, and $\rho(\delta \mathbf{D}^{-1}\mathbf{G}) < 1$. Then, $V_i \le [\mathbf{1}^\top(\mathbf{D}-\delta \mathbf{G})^{-1}]_i$. Further, equality holds if there exists $\mathbf{x}^\dagger \in \mathcal{X}$ such that $D_{\boldsymbol{\phi}}(\mathbf{x}^\dagger) = \mathbf{D}$.
\end{proposition}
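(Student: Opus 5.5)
The plan is to compare $\mathbf J(\mathbf x)^{-1}$ with $(\mathbf D-\delta\mathbf G)^{-1}$ entry-wise, using Neumann series and monotonicity of nonnegative matrices. Write $\boldsymbol\Lambda(\mathbf x)\triangleq D_{\boldsymbol\phi}(\mathbf x)=\operatorname{diag}(\phi_1'(x_1),\dots,\phi_N'(x_N))$. Assumption~\ref{ass:strong-convexity} gives $\phi_i'(x_i)\ge\mu_i>0$, at least wherever $\phi_i$ is differentiable, which the definition of $\mathbf J$ already presupposes. Hence $\boldsymbol\Lambda(\mathbf x)\ge\mathbf D$ entry-wise, so $\boldsymbol\Lambda(\mathbf x)^{-1}\le\mathbf D^{-1}$ entry-wise and both are nonnegative diagonal.

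\textbf{Step 1: Neumann expansion.} Since $\mathbf G\ge0$, we have $0\le\delta\boldsymbol\Lambda(\mathbf x)^{-1}\mathbf G\le\delta\mathbf D^{-1}\mathbf G$ entry-wise. Monotonicity of the Perron root for nonnegative matrices then gives $\rho(\delta\boldsymbol\Lambda(\mathbf x)^{-1}\mathbf G)\le\rho(\delta\mathbf D^{-1}\mathbf G)<1$. Consequently
\[
\mathbf J(\mathbf x)^{-1}=\bigl(\mathbf I_N-\delta\boldsymbol\Lambda(\mathbf x)^{-1}\mathbf G\bigr)^{-1}\boldsymbol\Lambda(\mathbf x)^{-1}=\sum_{k\ge0}\bigl(\delta\boldsymbol\Lambda(\mathbf x)^{-1}\mathbf G\bigr)^k\boldsymbol\Lambda(\mathbf x)^{-1},
\]
and the same expansion holds with $\mathbf D$ in place of $\boldsymbol\Lambda(\mathbf x)$, giving $(\mathbf D-\delta\mathbf G)^{-1}$. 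This step also justifies that $\mathbf J(\mathbf x)$ is invertible everywhere on $\mathcal X$, so $V_i$ is well defined.

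\textbf{Step 2: termwise comparison.} Each summand is a product of nonnegative matrices. Replacing every factor $\boldsymbol\Lambda(\mathbf x)^{-1}$ by the entry-wise larger $\mathbf D^{-1}$ can only increase the entries, because products of nonnegative matrices are monotone in each factor. Summing over $k$ yields
\[
0\le\mathbf J(\mathbf x)^{-1}\le(\mathbf D-\delta\mathbf G)^{-1}\quad\text{entry-wise, for every }\mathbf x\in\mathcal X.
\]
Left-multiplying by $\mathbf 1^\top\ge0$ preserves the inequality. This gives $[\mathbf 1^\top\mathbf J(\mathbf x)^{-1}]_i\le[\mathbf 1^\top(\mathbf D-\delta\mathbf G)^{-1}]_i$ for all $\mathbf x$, and taking the supremum over $\mathbf x$ proves $V_i\le[\mathbf 1^\top(\mathbf D-\delta\mathbf G)^{-1}]_i$.

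\textbf{Step 3: equality.} If $\boldsymbol\Lambda(\mathbf x^\dagger)=\mathbf D$, then $\mathbf J(\mathbf x^\dagger)=\mathbf D-\delta\mathbf G$. The supremum defining $V_i$ is therefore at least $[\mathbf 1^\top(\mathbf D-\delta\mathbf G)^{-1}]_i$, which matches the upper bound, so equality holds.

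The only delicate point is Step 1. It needs the spectral radius comparison $\rho(\mathbf A)\le\rho(\mathbf B)$ for $0\le\mathbf A\le\mathbf B$, a standard Perron--Frobenius fact that can be shown via $\|\mathbf A^k\|\le\|\mathbf B^k\|$ in any monotone norm together with Gelfand's formula. Everything else is routine nonnegativity bookkeeping.
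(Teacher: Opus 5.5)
Your proposal is correct and follows the paper's proof essentially step for step. Both arguments use the entry-wise bound $\mathbf{O}_N\le\delta D_{\boldsymbol\phi}(\mathbf x)^{-1}\mathbf G\le\delta\mathbf D^{-1}\mathbf G$, spectral-radius monotonicity (the paper also proves this via Gelfand's formula), and the Neumann-series factorization $\mathbf J(\mathbf x)^{-1}=(\mathbf I_N-\delta D_{\boldsymbol\phi}(\mathbf x)^{-1}\mathbf G)^{-1}D_{\boldsymbol\phi}(\mathbf x)^{-1}\le(\mathbf D-\delta\mathbf G)^{-1}$, then left-multiply by $\mathbf 1^\top$ and evaluate at $\mathbf x^\dagger$ to obtain equality.
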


\begin{remark}
In the linear--quadratic model ($\mathbf{D} = \mathbf{I}_N$), the Jacobian is constant ($\mathbf{J} = \mathbf{I}_N - \delta\mathbf{G}$), and $V_i$ collapses exactly to the standard Katz--Bonacich centrality \citep{kohlberg1983equilibrium, li2025pricing, candogan2012optimal}. Thus, IIV serves as the natural generalization of Katz--Bonacich centrality for nonlinear utility models. We defer the technical discussion regarding signed networks and dual-space interpretations to Appendix~\ref{sec:influencer_appendix}.
\end{remark}
We can now cleanly define a network influencer using this structural metric.
\begin{definition}[Influencer]
A consumer $i \in \mathcal{N}$ is an \emph{influencer} if: (1) they are uninfluenced by others (the $i$-th row of $\mathbf{G}$ is zero), and (2) their IIV ($V_i$) is large.
\end{definition}

This topological definition carries direct pricing implications. In Proposition~\ref{prop:general_pure_influencer}, we demonstrate that if an influencer is powerful enough to warrant an optimal price near zero, forcing $p_i=0$ strictly causes only a second-order loss in monopoly revenue. Conversely, Corollary~\ref{cor:structural_influencer_zero_price} proves that if an uninfluenced consumer has a low $V_i$, their optimal price $p_i^\star$ is strictly bounded away from zero by a constant, independent of the overarching equilibrium price vector $\mathbf{p}^\star$. For details, see Appendix \ref{subsec:influencer-pricing}.

\section{Seller optimization with unknown customer utility} \label{sec:seller-optimization}

This section develops an online learning algorithm to estimate the optimal price $\mathbf{p}^\star$ of \eqref{eq:Opt-prob} when the monotone utility curvature $\boldsymbol{\phi}$ is unknown, assuming the network $\delta \mathbf{G}$ and sensitivities $\mathbf{B}$ remain known. Since the intercepts $\mathbf{a}$ are additive, we absorb them into the unknown mapping by defining $\psi_i(x_i) \triangleq \phi_i(x_i)-a_i, \ (\boldsymbol{\psi}(\mathbf{x}) \triangleq \boldsymbol{\phi}(\mathbf{x}) - \mathbf{a})$.

\begin{assumption}\label{ass:strong-concavity-estim}
There exists a constant $m>0$ such that $\mathbf{D}-\delta\,\tfrac{\mathbf{G}+\mathbf{G}^\top}{2} \succ m\mathbf{I}_N$.
\end{assumption}

Assumption~\ref{ass:strong-concavity-estim} (together with Assumption~\ref{ass:strong-convexity}) implies that the map $\mathbf F(\mathbf x):=\boldsymbol{\psi}(\mathbf x)-\delta \mathbf G\mathbf x$ is $m$-strongly monotone (see Appendix~\ref{app:proof-monotonicity-F}), and implies the variational condition. Hence, by Theorem~\ref{thm:general_competitive_NE_uniqueness_VI}, for every $\mathbf{p} \in \mathcal{P}$, there exists a unique $\mathbf{x}^{\star}(\mathbf{p})$ satisfying the equilibrium condition, which, assuming that $\mathcal{P}\subseteq \mathbf{Q}(\mathring{\mathcal{X}})$, where $\mathbf{Q}=-\mathbf{B}^{-1} \circ \mathbf{F}$ (i.e. the consumption equilibrium falls inside $\mathcal{X}$, assumption widely used in the literature \citep{li2025price,chen2021duopoly,candogan2012optimal}) uniquely satisfies $\mathbf F(\mathbf x^\star(\mathbf p))=-\mathbf B\mathbf p$. Our algorithm (summarized in Algorithm~\ref{alg:isotonic_plugin_pricing}) partitions the horizon $T$ into an \textbf{exploration phase} $E=\{1,\dots, T_0\}$ and an \textbf{exploitation phase} $E'=\{T_0+1,\dots, T\}$.
In $E$, the seller estimates the unknown monotone functions $\psi_i$. At each $t \in E$, the seller posts a randomized $\mathbf{p}^t \in \mathcal{P}$ and observes the resulting consumption $\mathbf{x}^t \in \mathcal{X}$, satisfying:
\begin{equation}\label{eq:coordinate_regression_isotonic}
    \mathbf{F}(\mathbf{x}^t) = -\mathbf{B}\mathbf{p}^t + \boldsymbol{\xi}^t \quad \Longleftrightarrow \quad y_i^t = \psi_i(x_i^t) + \xi_i^t, \quad \forall i \in \mathcal{N},
\end{equation}
where $\boldsymbol{\xi}^t$ is a zero-mean noise vector and $y_i^t \triangleq -b_i p_i^t + \delta(\mathbf{G}\mathbf{x}^t)_i$. Because $\delta \mathbf{G}$ and $\mathbf{B}$ are known, $y_i^t$ is fully observable, reducing the estimation to $N$ \textit{independent one-dimensional} monotone regressions. For each coordinate $i$, we sort the samples such that $x_i^{(1)} \le \cdots \le x_i^{(T_0)}$ and compute the isotonic least-squares estimate via the pool-adjacent-violators algorithm (PAVA) \citep{robertson1988order}:
\begin{equation}\label{eq:isotonic_estimator}
    \textstyle (\widehat{m}_i^{(1)},\dots,\widehat{m}_i^{(T_0)}) \in \arg\min_{m_1 \le \cdots \le m_{T_0}} \sum_{t=1}^{T_0} (y_i^{(t)} - m_t)^2.
\end{equation}
We then define $\widehat{\psi}_i$ on $\mathcal{X}_i$ as the piecewise-linear interpolation of these fitted values, stacking them to form the global estimator $\widehat{\boldsymbol{\psi}}(\mathbf{x}) \triangleq (\widehat{\psi}_1(x_1),\dots,\widehat{\psi}_N(x_N))^\top$. During exploitation ($t \in E'$), we define the empirical operator $\widehat{\mathbf{F}}(\mathbf{x}) \triangleq \widehat{\boldsymbol{\psi}}(\mathbf{x}) - \delta \mathbf{G}\mathbf{x}$ and compute the estimated equilibrium $\widehat{\mathbf{x}}^\star(\mathbf{p})$ by solving $\widehat{\mathbf{F}}(\widehat{\mathbf{x}}^\star(\mathbf{p})) = -\mathbf{B}\mathbf{p}$. The seller then repeatedly posts the price vector that maximizes the empirical revenue: $\mathbf{p}^t \in \arg\max_{\mathbf{p} \in \mathcal{P}} \mathbf{p}^\top \widehat{\mathbf{x}}^\star(\mathbf{p})$. We compare our algorithm against an oracle that knows an optimal solution $\mathbf{p}^{\star}$ to \eqref{eq:Opt-prob} in advance. The cumulative regret is: $R(T) \triangleq \sum_{t=1}^T \bigl( r(\mathbf{p}^\star) - r(\mathbf{p}^t) \bigr)$.

\begin{algorithm}[h]
\caption{}
\label{alg:isotonic_plugin_pricing}
\begin{algorithmic}[1]
\Require network $\mathbf G$, price sensitivities $\mathbf B$, total horizon $T$, exploration horizon $T_0$, feasible set $\mathcal P$.
\For{$t=1,\dots,T_0 \text{ (\textbf{Exploration phase})}$}
    \State $\blacktriangleright$ Sample $\mathbf p^t\sim \operatorname{Unif}(\mathcal P)$ and observe $\mathbf x^t\in\mathcal X$, which is sampled as in \eqref{eq:coordinate_regression_isotonic}.
    \State $\blacktriangleright$ $\forall i\in\mathcal N$, form $y_i^t:=-b_i p_i^t+\delta(\mathbf G\mathbf x^t)_i$.
\EndFor
\State $\blacktriangleright$ $\forall i\in\mathcal N$, estimate $\widehat{\psi}_i$ by isotonic regression using the exploration sample $\{(x_i^t,y_i^t)\}_{t=1}^{T_0}$
\State $\blacktriangleright$ Let $\mathbf{p}\mapsto\widehat{\mathbf{x}}^{\star}(\mathbf{p})$ such that
$\widehat{\mathbf{F}}(\widehat{\mathbf{x}}^{\star}(\mathbf{p}))
=
-\mathbf B\mathbf p$, where $\widehat{\mathbf{F}}(\mathbf x):=\widehat{\boldsymbol{\psi}}(\mathbf x)-\delta \mathbf G\mathbf x$.
\For{$t=T_0+1,\dots,T\textbf{ (Exploitation phase)}$}
    \State $\blacktriangleright$ Return any value  $\mathbf{p}^t \in \arg\max_{\mathbf p\in\mathcal P}\left\{\widehat{r}(\mathbf p)=\mathbf{p}^\top \widehat{\mathbf{x}}^{\star}(\mathbf{p})\right\}$
\EndFor
\end{algorithmic}
\end{algorithm}

\begin{theorem}
\label{thm:dynamic_regret_isotonic_network}
Suppose Assumptions~\ref{ass:strong-convexity} and \ref{ass:strong-concavity-estim} hold. Let the joint action space $\mathcal{X} = \prod_{i=1}^N \mathcal{X}_i$ be compact and $\mathcal{P} \subseteq \mathbf{Q}(\mathring{\mathcal{X}})$, where $\mathbf{Q} \triangleq -\mathbf{B}^{-1} \circ \mathbf{F}$. Assume the unknown functions $\psi_i: \mathcal{X}_i \to \mathbb{R}$ are monotone non-decreasing and $\alpha_i$-H\"older continuous: $|\psi_i(u) - \psi_i(v)| \le L_i|u-v|^{\alpha_i}$ for all $u, v \in \mathcal{X}_i$, with $L_i > 0$ and $\alpha_i \in (0,1]$. Further, let the noise vectors $\boldsymbol{\xi}^t \in \mathbb{R}^N$ have independent, zero-mean sub-Gaussian coordinates with variance proxies $\sigma_i^2$. 

If the exploration length is $T_0 = \lceil T^\beta \rceil$ with $\beta = \frac{2\alpha+1}{3\alpha+1}$ and $\alpha = \min_i \alpha_i$, then for all sufficiently large $T$, with probability at least $1 - \frac{N}{T^{\beta(\gamma-2)}}$, we have:
\[
    \textstyle R(T) \lesssim \sqrt{N}\,T^{\frac{2\alpha+1}{3\alpha+1}} \ln^{\frac{\alpha}{2\alpha+1}}(T), \quad \text{and} \quad \sup_{\mathbf{p} \in \mathcal{P}} \|\widehat{\mathbf{x}}^{\star}(\mathbf{p}) - \mathbf{x}^\star(\mathbf{p})\|_2 \lesssim \sqrt{N} (\nicefrac{\ln T_0}{T_0})^{\frac{\alpha}{2\alpha +1}}.
\]
If the revenue $r$ is additionally strongly concave, then $\|\mathbf{p}^t - \mathbf{p}^\star\|_2^2 \lesssim \sqrt{N} (\nicefrac{\ln T_0}{T_0})^{\frac{\alpha}{2\alpha+1}}$ holds with the same probability.
\end{theorem}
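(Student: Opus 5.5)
The proof splits the regret into the exploration and exploitation phases. During exploration we bound the per-round regret by a constant, since $\mathcal P$ and $\mathcal X$ are compact and $r$ is continuous. This gives a contribution of order $T_0 \asymp T^\beta$. During exploitation every round posts the same maximizer $\widehat{\mathbf p}$ of $\widehat r$, so the regret there is at most $2(T-T_0)\sup_{\mathbf p\in\mathcal P}|\widehat r(\mathbf p)-r(\mathbf p)|$. This follows from the standard plug-in argument
\[
r(\mathbf p^\star)-r(\widehat{\mathbf p}) \le [r(\mathbf p^\star)-\widehat r(\mathbf p^\star)] + [\widehat r(\widehat{\mathbf p})-r(\widehat{\mathbf p})].
\]
Moreover, $|\widehat r(\mathbf p)-r(\mathbf p)|\le \|\mathbf p\|_2\,\|\widehat{\mathbf x}^\star(\mathbf p)-\mathbf x^\star(\mathbf p)\|_2$, and $\|\mathbf p\|_2$ is bounded on $\mathcal P$. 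Hence everything reduces to the uniform equilibrium-error bound stated in the theorem.

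\textbf{Step 1 (stability of the equilibrium map).} Fix $\mathbf p$. By Assumption~\ref{ass:strong-concavity-estim}, $\mathbf F$ is $m$-strongly monotone. Since $\mathbf F(\mathbf x^\star)=-\mathbf B\mathbf p=\widehat{\mathbf F}(\widehat{\mathbf x}^\star)$, we have
\[
m\|\widehat{\mathbf x}^\star-\mathbf x^\star\|_2^2\le\langle \mathbf F(\widehat{\mathbf x}^\star)-\mathbf F(\mathbf x^\star),\widehat{\mathbf x}^\star-\mathbf x^\star\rangle=\langle \boldsymbol\psi(\widehat{\mathbf x}^\star)-\widehat{\boldsymbol\psi}(\widehat{\mathbf x}^\star),\widehat{\mathbf x}^\star-\mathbf x^\star\rangle .
\]
Cauchy--Schwarz then gives $\|\widehat{\mathbf x}^\star-\mathbf x^\star\|_2\le m^{-1}\|\widehat{\boldsymbol\psi}-\boldsymbol\psi\|_{2,\infty}\le m^{-1}\sqrt N\max_i\|\widehat\psi_i-\psi_i\|_\infty$, uniformly in $\mathbf p$. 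This is where the $\sqrt N$ comes from.

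Existence of $\widehat{\mathbf x}^\star$ needs separate care. The estimate $\widehat\psi_i$ is only monotone, not strongly monotone. I would have Algorithm~\ref{alg:isotonic_plugin_pricing} solve the corresponding variational inequality over compact $\mathcal X$ (existence via Theorem~\ref{thm:general_competitive_NE_existence_monopolist}). The same inequality still holds for this solution, because $\mathbf x^\star$ is interior and the VI term has the right sign.

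\textbf{Step 2 (uniform isotonic rate; the main obstacle).} We need, with probability at least $1-NT_0^{-(\gamma-2)}$, that
\[
\max_i\|\widehat\psi_i-\psi_i\|_\infty\lesssim (\ln T_0/T_0)^{\alpha/(2\alpha+1)} .
\]
I would invoke the finite-sample uniform concentration result for isotonic regression (Theorem~\ref{thm:Dumbgen}, in the spirit of D\"umbgen's sup-norm bounds) coordinatewise and take a union bound over $i$. The hypotheses to verify are the following. First, the design $x_i^t=x_i^\star(\mathbf p^t)$ (up to noise) has a density bounded below on $\mathcal X_i$. This should follow from $\mathbf p^t\sim\operatorname{Unif}(\mathcal P)$ and the fact that $\mathbf Q$ is a bi-Lipschitz diffeomorphism between $\mathring{\mathcal X}$ and $\mathcal P$, by strong monotonicity and continuity. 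Second, the noise is sub-Gaussian and independent across $t$. Third, $\psi_i$ is H\"older with exponent $\alpha_i\ge\alpha$.

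The proof of the isotonic bound itself has two parts. A bias part uses local averaging windows of width $h$ and gives $L h^{\alpha}$. A variance part uses maxima of partial sums of sub-Gaussian variables over all intervals; it gives $\sqrt{\gamma\ln T_0/(T_0h)}$ after a union bound over $O(T_0^2)$ intervals, which is where $\gamma$ enters. Balancing the two at $h\asymp(\ln T_0/T_0)^{1/(2\alpha+1)}$ gives the rate, and linear interpolation preserves it. A further subtlety is that the noise enters the design through $\mathbf x^t$. I would either condition on the design (treating $\boldsymbol\xi^t$ as independent of $x_i^t$, as the model \eqref{eq:coordinate_regression_isotonic} implicitly posits) or flag this as a modeling assumption.

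\textbf{Step 3 (balancing).} Combining the two phases,
\[
R(T)\lesssim T_0+T\sqrt N(\ln T_0/T_0)^{\alpha/(2\alpha+1)} .
\]
With $T_0=T^\beta$, the exploitation term is of order $\sqrt N\,T^{1-\beta\alpha/(2\alpha+1)}(\ln T)^{\alpha/(2\alpha+1)}$. Equating exponents, $\beta=1-\beta\alpha/(2\alpha+1)$, gives $\beta=(2\alpha+1)/(3\alpha+1)$, which matches the stated rate. The failure probability $N T_0^{-(\gamma-2)}=N T^{-\beta(\gamma-2)}$ matches the statement.

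Finally, if $r$ is $\kappa$-strongly concave, then
\[
\tfrac{\kappa}{2}\|\widehat{\mathbf p}-\mathbf p^\star\|_2^2\le r(\mathbf p^\star)-r(\widehat{\mathbf p})\le 2\sup|\widehat r-r| .
\]
This gives the stated squared-distance bound on the same event.
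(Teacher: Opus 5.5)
Your architecture is the paper's. It has the exploration cost $\lesssim T_0$, the plug-in bound $r(\mathbf p^\star)-r(\widehat{\mathbf p})\le 2P_{\max}\sup_{\mathbf p}\|\widehat{\mathbf x}^\star(\mathbf p)-\mathbf x^\star(\mathbf p)\|_2$, and the strong-monotonicity stability inequality; your Cauchy--Schwarz step replaces the paper's $\ell_\infty/\ell_1$ step and gives the same $\sqrt N/m$. It then uses Theorem~\ref{thm:Dumbgen} with a union bound and the same balancing of $\beta$. Defining $\widehat{\mathbf x}^\star(\mathbf p)$ through a variational inequality is a real improvement, since the paper simply assumes $\widehat{\mathbf F}(\mathbf x)=-\mathbf B\mathbf p$ is solvable, and your sign argument for it is correct.

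The gap is in Step~2, at the boundary.
\begin{enumerate}
\item Theorem~\ref{thm:Dumbgen} does not give $\max_i\|\widehat\psi_i-\psi_i\|_\infty\lesssim(\ln T_0/T_0)^{\alpha/(2\alpha+1)}$ on all of $\mathcal X_i$. It controls the error only on $\mathcal X_{i,n}$, at distance at least $\delta_n$ from the edge of the region where the design density is bounded below. This restriction cannot be removed, because the isotonic least-squares estimate is not consistent at the edge of the design.
\item Your design argument misreads the hypothesis. $\mathcal P$ is closed, while $\mathbf Q(\mathring{\mathcal X})$ is open (invariance of domain) and bounded. Hence $\mathcal P\subsetneq\mathbf Q(\mathring{\mathcal X})$, and $\mathbf Q$ is not a bijection from $\mathring{\mathcal X}$ onto $\mathcal P$. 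If the noise is treated as independent of the design, then $x_i^t$ is a coordinate of $\mathbf Q^{-1}(\mathbf p^t)$. It is supported on the proper sub-interval $\pi_i(\mathbf Q^{-1}(\mathcal P))$ of $\mathring{\mathcal X}_i$ and has no density elsewhere. The endpoints of that interval are exactly where $x_i^\star(\mathbf p)$ sits for extreme $\mathbf p\in\partial\mathcal P$, so the supremum over $\mathcal P$ in your stability bound is not controlled.
\item Your stability bound evaluates $\widehat{\boldsymbol\psi}$ at $\widehat{\mathbf x}^\star(\mathbf p)$, and a VI posed on all of $\mathcal X$ does not keep that point away from $\partial\mathcal X$.
\end{enumerate}

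Closing the gap needs three ingredients.
\begin{enumerate}
\item[(i)] You need a design density bounded below on a fixed neighborhood of $\mathbf Q^{-1}(\mathcal P)$. The paper gets one on all of $\mathcal X$ from the noise itself: it writes $\mathbf x^t=\mathbf F^{-1}(-\mathbf B\mathbf p^t+\boldsymbol\xi^t)$, assumes a positive noise density, and changes variables with $|\det\nabla\mathbf F|\ge m^N$. That is precisely the noise--design dependence you flag, and the paper's Hoeffding step (Lemma~\ref{lemma:Hoeffding}) leaves it unaddressed. So under your independence reading this source of coverage is unavailable, and you would need another one.
\item[(ii)] Use $\mathcal P\subseteq\mathbf Q(\mathring{\mathcal X})$ for what it actually provides. $\mathbf Q^{-1}(\mathcal P)$ is compact in $\mathring{\mathcal X}$, hence at distance $d>0$ from $\partial\mathcal X$. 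So once $\delta_n<d$, the true equilibria lie in $\mathcal X_n$.
\item[(iii)] Localize the plug-in equilibrium by posing your VI on a compact box $\mathcal K$ with $\mathbf Q^{-1}(\mathcal P)\subseteq\mathcal K\subseteq\mathcal X_n$. Your sign argument only uses $\mathbf x^\star(\mathbf p)\in\mathcal K$, so it goes through verbatim. Then $\widehat{\boldsymbol\psi}$ is evaluated only where Theorem~\ref{thm:Dumbgen} applies. This is a cleaner version of the paper's informal ``for $n$ large enough'' remark.
\end{enumerate}
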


\begin{remark}[The Low-Dimensional Regime]\label{rem:low-dim-N}
Theorem~\ref{thm:dynamic_regret_isotonic_network} operates in a \emph{low-dimensional} regime where the number of consumers $N$ is much smaller than the time horizon ($N \ll T$). Because the seller must estimate $N$ distinct coordinate-wise response functions from $T_0$ exploration samples, both the equilibrium estimation error and the cumulative regret scale as $\mathcal{O}(\sqrt{N})$. Consequently, the high-probability guarantee of $1-\frac{N}{T^{\beta(\gamma-2)}}$ remains meaningful only if $N$ grows sufficiently slowly relative to $T$.
\end{remark}

\begin{remark}[Market Segmentation and Shared Utilities]
\label{rem:segmentation}
Market segmentation drastically mitigates the $\mathcal{O}(\sqrt{N})$ dependence of Remark~\ref{rem:low-dim-N}. In practice, consumers are often grouped by observable context into $K \ll N$ types sharing identical utilities ($h_i = h_k$). Such structural clustering is fairly standard in dynamic pricing \citep{ban2021personalized, ferreira2016analytics} and network learning \citep{cesa2013gang, gentile2014online}. By pooling exploration samples within each segment, the seller estimates only $K$ scalar functions. This reduces the effective dimensionality, allowing regret and estimation errors to scale gracefully as $\mathcal{O}(\sqrt{K})$. Consequently, our guarantees extend to large-scale networks ($N \gg T$), provided $K$ remains small.
\end{remark}

\begin{remark}[Computational Complexity]
The total complexity of the plug-in procedure is:
$$
\underbrace{\mathcal{O} \bigl(T_0\,\mathrm{nnz}(\mathbf{G})\bigr)}_{\text{constructing responses } y_i^t}
\;+\;
\underbrace{\mathcal{O} \bigl(NT_0\ln T_0\bigr)}_{\text{isotonic estimation of } \widehat{\boldsymbol{\psi}}}
\;+\;
\underbrace{\mathcal{O}\bigl(N+\mathrm{nnz}(\mathbf{G})\bigr)}_{\text{evaluating } \widehat{\mathbf{x}}^{\star} \text{ and optimizing}},
$$
where $\mathrm{nnz}(\mathbf{G})$ denotes the number of nonzero entries in the network matrix $\mathbf{G}$. We defer a full discussion of the computational methods for Algorithm~\ref{alg:isotonic_plugin_pricing} to Appendix~\ref{app:comp_comp}.
\end{remark}

\paragraph{Experiments.} We evaluate the finite-sample performance of Algorithm~\ref{alg:isotonic_plugin_pricing} on synthetic network markets. Consistently with Remark~\ref{rem:low-dim-N}, we focus on a low-dimensional regime in which the number of consumers is fixed at $N=20$, while the time horizon varies over $\mathcal{T}=\{25,50,75,100,125\}$. A detailed description of the experimental setup is deferred to Appendix~\ref{app:experiments}. We consider several specifications of the convex regularizer $h_i=h$, equivalently of its derivative $\phi=h'$:
\begin{itemize}[leftmargin=1em,noitemsep,topsep=0pt]
    \item \textbf{Linear--Quadratic utility:} $h(x)=\tfrac{1}{2}x^2$. In this case, $\phi(x)=x$, so the marginal response map is linear and belongs to the H\"older class with exponent $\alpha=1$.
    
    \item \textbf{Power utility:} $h(x)=\tfrac{1}{\gamma+1}x^{\gamma+1}$, with $\gamma\in\{0.2,0.4,0.6,0.8\}$. Here $\phi(x)=x^{\gamma}$, and the corresponding regularity $\alpha$ is indexed by the exponent $\gamma$, allowing us to test the algorithm under progressively less regular nonlinear responses.
    
    \item \textbf{Discrete-choice utility:} $h(x)=x\ln x+(1-x)\ln(1-x)$. In this case, $\phi(x)=\ln \bigl(\frac{x}{1-x}\bigr)$ diverges at the boundary of $[0,1]$, and therefore does not satisfy a global H\"older condition on the whole domain. We nevertheless include this specification in the experiments to assess the robustness of the proposed procedure beyond the exact scope of the theory.
\end{itemize}

We report the results for two different network structures $\mathbf G$. More network structures experiments validating our results in Theorem~\ref{thm:dynamic_regret_isotonic_network} can be found in Appendix~\ref{app:experiments}.

\noindent
\begin{minipage}[t]{0.33\textwidth}
\vspace{0pt}
\textbf{1. Sparse circular $\mathbf{G}$.} We generate $\mathbf G$ as a sparse directed circular network. Specifically, for each $i$ we set $g_{i,(i+1)\pmod N}=w$, $g_{i,(i+2)   \pmod N}=\tfrac{w}{2}$, $g_{ii}=0$, with $w=0.08$ and all remaining entries equal to zero. Hence, each node is influenced by its first successor on the circle and, more weakly, by its second successor. To introduce signed interactions, we flip the sign of $10\%$ of the nonzero entries. This yields a sparse directed network with an underlying circular topology and a few negative spillovers. Figure~\ref{fig:tau_2} illustrates the convergence of the total regret, validating our results in Theorem~\ref{thm:dynamic_regret_isotonic_network}.

\end{minipage}
\hspace{0.005\textwidth}
\begin{minipage}[t]{0.68\textwidth}
\vspace{0pt}
\centering
\includegraphics[width=\linewidth]{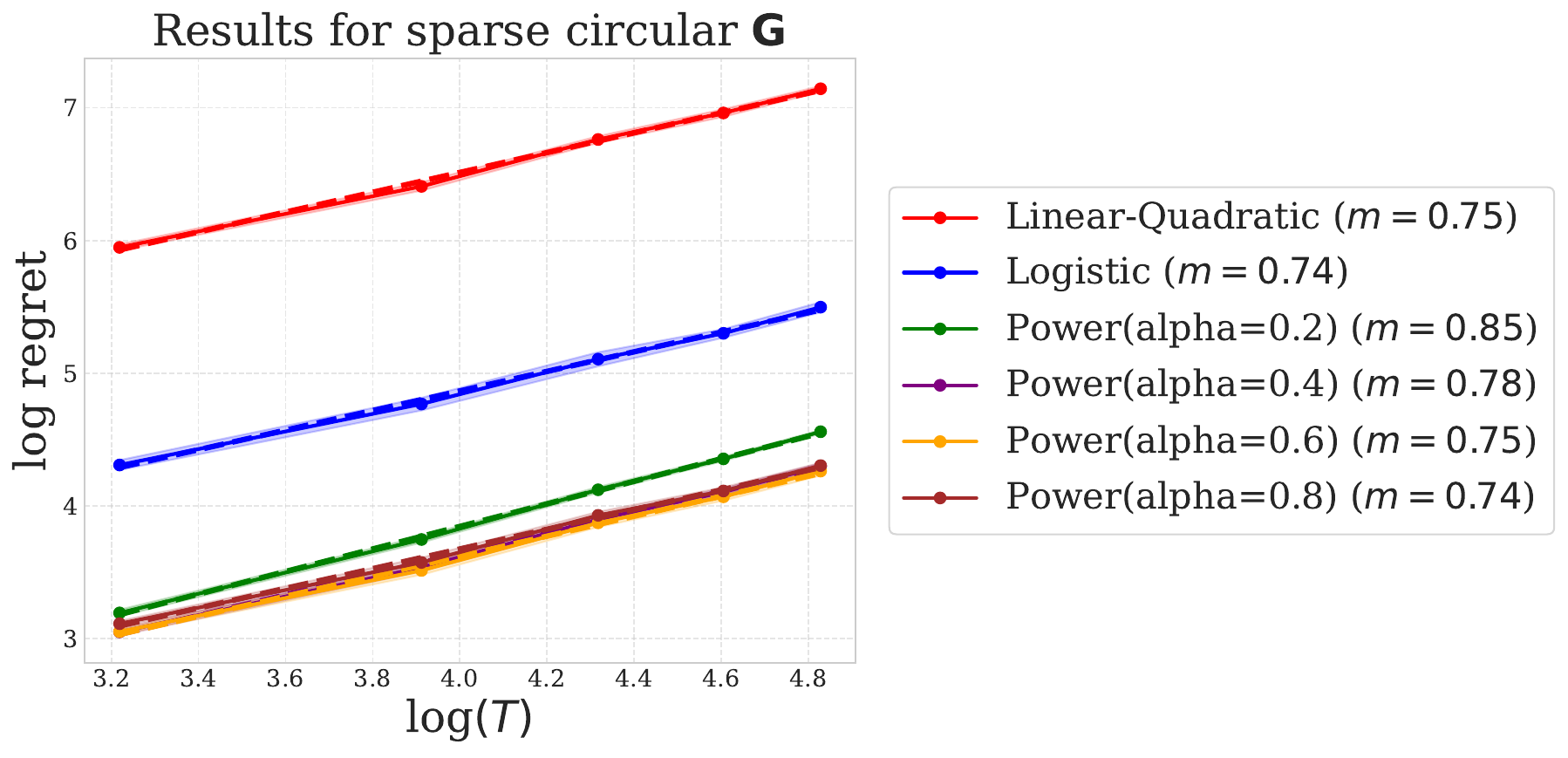}
\captionof{figure}{Average across the $N$ sellers of the cumulative regret as a function of the horizon $T\in\mathcal T$, displayed on a log--log scale. For each value of $T$, we run $10$ independent repetitions and report $95\%$ confidence intervals. The empirical slopes $m$'s are consistent with the theoretical regret rates predicted by our Theorem~\ref{thm:dynamic_regret_isotonic_network} (same or lower rate), which are respectively (following the legend from top to bottom): $0.75$, NA, $0.87$, $0.81$, $0.78$, and $0.76$.}
\label{fig:tau_2}
\end{minipage}

\textbf{2. $\mathbf{G}$ with an influencer.} We also consider a network with the presence of an influencer, indexed by $i=0$. This node is characterized by a null $i$-th row of $\mathbf G$ and large positive entries in the $i$-th column, so that it affects the rest of the network while remaining unaffected by others. Figure~\ref{fig:influencer_prices} displays the resulting optimal price distribution. Consistently with the theory in Section~\ref{sec:influencers}, the influencer receives a substantially lower optimal price than the rest of the network, in some cases close to zero.

\begin{figure}[h]
\centering
\includegraphics[width=1\linewidth]{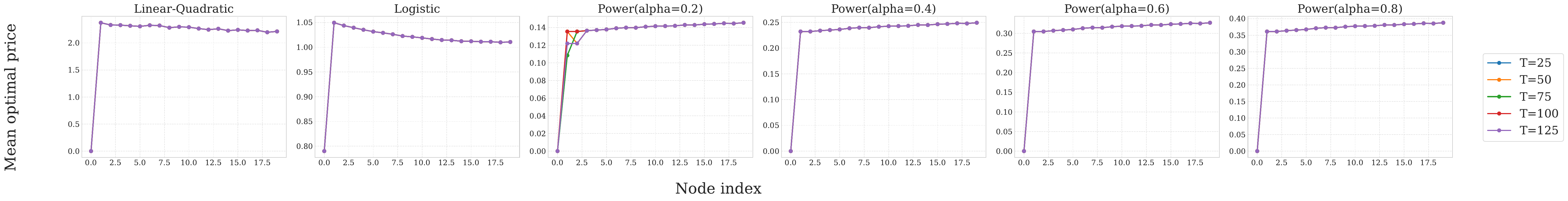}
\caption{Average optimal prices across nodes in the influencer network. The influencer, indexed by $i=0$, is characterized by a null $i$-th row of $\mathbf G$ and large positive entries in the $i$-th column.}
\label{fig:influencer_prices}
\end{figure}




\section{Future directions: addressing limitations}\label{sec:future_directions}

Several directions remain open. First, in line with Remark~\ref{rem:low-dim-N}, it would be important to study regimes in which the number of consumers $N$ is of the same order as the horizon $T$, or even larger. In such settings, additional structure (such as sparsity, clustering, or low-rank parameterizations of the functions $\psi_i$) will be necessary to obtain meaningful statistical and computational guarantees. In particular, identifying latent clustering structure appears especially promising, since it could reduce the pricing problem to a smaller number of effective communities, as discussed in Remark \ref{rem:segmentation} and also suggested by the decomposition results in Section~\ref{sec:influencers}. A second direction concerns the learning problem itself. Our regret analysis relies on H\"older regularity of the unknown marginal response functions, whereas certain economically relevant models such as the discrete-choice specification fall outside this class because of boundary singularities. Extending the theory to such settings would considerably broaden the scope of the algorithm. Finally, an ambitious theoretical challenge is to study seller-side optimality without imposing the interiority condition that the equilibrium consumption lies in $\mathring{\mathcal X}$. This assumption is convenient because it enables a clean first-order characterization of the seller problem, but removing it would require a more general treatment, likely involving variational-inequality methods directly at the seller level.

\newpage

\section{Broader Impact}

This paper studies monopoly pricing over consumer networks with nonlinear utilities, with a focus on equilibrium analysis, price discrimination, and online learning. On the positive side, the framework may improve the modeling of markets in which consumers interact through peer effects, congestion, or adoption spillovers, thereby enabling pricing decisions that better reflect the underlying economic structure. More broadly, the methodological contributions on equilibrium characterization and shape-constrained learning may be useful beyond pricing, for example in networked decision systems and structural demand estimation.

At the same time, the tools developed here could be used to support increasingly personalized pricing strategies. While such strategies may improve efficiency or targeting, they also raise concerns about fairness, transparency, and the potential exploitation of vulnerable or highly influential consumers. In particular, models that identify influential nodes in a network may be used to selectively discount or surcharge specific individuals in ways that are difficult to detect or regulate. Moreover, if deployed with real consumer data, errors in network estimation or utility learning could amplify existing biases and lead to systematically unequal treatment across groups.

For these reasons, we view the present work primarily as a theoretical and methodological contribution. Any real-world deployment should be accompanied by appropriate safeguards, including transparency about pricing policies, robustness checks, and, where relevant, fairness or regulatory constraints. We hope this work contributes to a more principled understanding of network-based pricing while also encouraging careful consideration of its societal implications.

\section{Details of LLM usage}
We used generative AI tools when preparing the manuscript to polish our sentences and correct potential typos; we remain responsible for all opinions, findings, and conclusions or recommendations expressed in the paper.

\bibliographystyle{apalike}
\bibliography{main}


\newpage
\section*{\centering \Large Appendix of \\
Equilibrium and Pricing in Consumer Networks with
Nonlinear Utilities: An Online Shape-Constrained
Learning Approach
}
\appendix

\startcontents[appendix]

\section*{Appendix Contents}
\printcontents[appendix]{}{1}{}

\section*{Appendix Roadmap}

For the reader’s convenience, we briefly summarize the content of each Appendix.

\begin{itemize}[leftmargin=1.5em,itemsep=0.25em,topsep=0.25em]
    \item \textbf{Appendix A: Uniqueness of NE via Contraction and via Variational Inequalities.}
    Details the two analytical routes establishing Theorem~\ref{thm:uniqueness_NE}: the Banach contraction and variational inequality arguments. It explores the role of the weighted sup-norm, provides counterexamples showing the two conditions are generally incomparable, and derives practical sufficient conditions via Gershgorin-type bounds.

    \item \textbf{Appendix B: Missing Proofs.}
    Collects all mathematical proofs omitted from the main text. This includes Theorem~\ref{thm:comparison}, the invertibility properties of the Jacobian $\mathbf{J}(\mathbf{x})$, Proposition~\ref{prop:v_i-equality}, the influencer-pricing results, and the dynamic regret bounds of Theorem~\ref{thm:dynamic_regret_isotonic_network}.

    \item \textbf{Appendix C: Optimal Price for Symmetric and Non-Negative Networks.}
    Contrasts benchmark network structures under linear--quadratic and nonlinear settings, characterizing precisely when the optimal monopoly price is invariant to the underlying network topology and when this property breaks down.
    
    \item \textbf{Appendix D: Network Communities and their Pricing Policies.}
    Analyzes block-diagonal network structures, demonstrating how both the consumer equilibrium and the seller’s pricing optimization perfectly decompose across independent network communities.

    \item \textbf{Appendix E: Supplementary Details on Network Influencers.}
    Expands on the structural definition of network influencers introduced in the main text. It examines the Jacobian's invertibility, interprets intrinsic influential value across specific models, and elaborates on the pricing implications for highly connected consumers.

    \item \textbf{Appendix F: Examples of Utility Functions Covered by our Model.}
    Catalogues concrete utility specifications accommodated by our general framework, including linear--quadratic, Stone--Geary, discrete-choice, exponential, and isoelastic models.

    \item \textbf{Appendix G: Examples of Monopolistic Euclidean Network Topologies.}
    Constructs explicit examples of follower and influencer network structures in the Euclidean setting, detailing their implied optimal pricing strategies.

    \item \textbf{Appendix H: Finite-Sample Convergence of the Isotonic Estimates.}
    Develops the finite-sample uniform convergence guarantees for isotonic regression with continuous responses. These concentration bounds form the technical foundation for the main regret analysis.

    \item \textbf{Appendix I: Computational Complexity.}
    Breaks down the computational cost of Algorithm~\ref{alg:isotonic_plugin_pricing}, detailing the overhead for response construction, isotonic estimation, and the final plug-in pricing optimization step.

    \item \textbf{Appendix J: Experiments.}
    Provides comprehensive details on the empirical evaluation, including hyperparameter selections, network generation, oracle baselines, and supplementary numerical experiments that complement the main text.
\end{itemize}

\newpage

\section{Uniqueness of NE via Contraction and via Variational Inequalities}\label{appendix:uniqueness}

\subsection{Uniqueness of Nash equilibrium via contraction}\label{appendix:uniqueness-contration}
To prove the uniqueness via the Banach contraction theorem, we first prove that the best-response map is Lipschitz, then impose a contraction condition. 

\begin{lemma}[Lipschitz continuity of the regularized best response]
\label{lem:BR_Lipschitz_general_monopolist}
Let Assumption~\ref{ass:strong-convexity} hold. For each $z\in\mathbb R$, define
\[
\mathcal T_i(z)\triangleq \arg\max_{x_i\in\mathcal X_i}\{z x_i-h_i(x_i)\}.
\]
Then $\mathcal T_i(z)$ is single-valued, and for all $z,\tilde z\in\mathbb R$,
\[
\textstyle \bigl|\mathcal T_i(z)-\mathcal T_i(\tilde z)\bigr|
\le \frac{1}{\mu_i}|z-\tilde z|.
\]
\end{lemma}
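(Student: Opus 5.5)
The plan is to prove this via the first-order optimality conditions for the strongly concave maximization, combined with strong monotonicity of $h_i'$.

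\textbf{Existence and uniqueness.} For fixed $z$, the objective $x\mapsto zx-h_i(x)$ is $\mu_i$-strongly concave on the convex set $\mathcal X_i$, by Assumption~\ref{ass:strong-convexity}. It is also continuous. Strong concavity gives the upper bound $zx-h_i(x)\le zx_0-h_i(x_0)+(z-h_i'(x_0))(x-x_0)-\frac{\mu_i}{2}|x-x_0|^2$, which makes the objective coercive. Hence the superlevel sets are compact, so a maximizer exists on the closed interval $\mathcal X_i$ even when $\mathcal X_i$ is unbounded. Strict concavity then rules out two distinct maximizers, so $\mathcal T_i(z)$ is single-valued.

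\textbf{Optimality conditions.} Write $x=\mathcal T_i(z)$ and $\tilde x=\mathcal T_i(\tilde z)$. Since $h_i$ is $C^1$ and $\mathcal X_i$ is convex, the first-order condition for a concave maximization over a convex set gives
\[
(z-h_i'(x))(y-x)\le 0 \quad\forall y\in\mathcal X_i, \qquad (\tilde z-h_i'(\tilde x))(y-\tilde x)\le 0 \quad\forall y\in\mathcal X_i.
\]
Take $y=\tilde x$ in the first inequality and $y=x$ in the second, then add them. This yields
\[
(z-\tilde z)(\tilde x-x) - \bigl(h_i'(x)-h_i'(\tilde x)\bigr)(\tilde x-x)\le 0,
\]
which rearranges to $(h_i'(x)-h_i'(\tilde x))(x-\tilde x)\le (z-\tilde z)(x-\tilde x)$.

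\textbf{Strong monotonicity.} Write the strong convexity inequality of Assumption~\ref{ass:strong-convexity} twice, with the roles of $x$ and $\tilde x$ swapped, and add the two copies. This gives $(h_i'(x)-h_i'(\tilde x))(x-\tilde x)\ge \mu_i|x-\tilde x|^2$. Combining with the previous step and applying Cauchy--Schwarz (here just $|ab|=|a||b|$) yields $\mu_i|x-\tilde x|^2\le |z-\tilde z|\,|x-\tilde x|$. Dividing by $|x-\tilde x|$ when it is nonzero, and noting the case $x=\tilde x$ is trivial, gives the claimed bound $|\mathcal T_i(z)-\mathcal T_i(\tilde z)|\le \frac{1}{\mu_i}|z-\tilde z|$.

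\textbf{Main obstacle.} The only mildly delicate step is existence when $\mathcal X_i$ is unbounded. The coercivity supplied by strong concavity handles this, and the rest of the argument is routine. An alternative route is the projection formula of Remark~\ref{rem:BR_fixed_point_characterization} together with the facts that $\phi_i^{-1}$ is $1/\mu_i$-Lipschitz and that projection onto an interval is nonexpansive. However, that route requires checking that the projection formula is valid when $\phi_i$ is not surjective onto $\mathbb R$, so the variational argument above is the cleaner choice.
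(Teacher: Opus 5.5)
Your proof is correct and follows essentially the same route as the paper: you write the first-order variational inequalities at the two maximizers, add them, apply strong monotonicity of $h_i'$, and finish with Cauchy--Schwarz and a division by $|x-\tilde x|$. Your explicit coercivity argument for existence and uniqueness fills in a step the paper only asserts; it needs $\mathcal X_i$ to be closed, as in Theorem~\ref{thm:general_competitive_NE_existence_monopolist}. Using $h_i'$ where the paper uses a subgradient makes no difference under Assumption~\ref{ass:strong-convexity}.
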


\begin{proof}[Proof of Lemma~\ref{lem:BR_Lipschitz_general_monopolist}]
Let
\[
x=\mathcal T_i(z),
\qquad
\tilde x=\mathcal T_i(\tilde z).
\]
Since $x$ maximizes $u\mapsto z \cdot u-h_i(u)$ over $\mathcal{X}_i$, the first-order optimality condition gives
\[
(z-g)\cdot (u-x) \le 0
\qquad
\forall u\in\mathcal{X}_i,
\]
for some $g\in \partial h_i(x)$. Choosing $u=\tilde x$, we obtain
\[
(z-g) \cdot (\tilde x-x ) \le 0.
\]
Similarly, since $\tilde x$ maximizes $u\mapsto \tilde z \cdot u-h_i(u)$, there exists $\tilde g\in\partial h_i(\tilde x)$ such that
\[
(\tilde z-\tilde g) \cdot (x-\tilde x ) \le 0.
\]
Adding the two inequalities yields
\[
( z-\tilde z) \cdot (x-\tilde x )
\ge
(g-\tilde g) \cdot (x-\tilde x).
\]
By strong convexity, the subdifferential of $h_i$ is strongly monotone:
\[
(g-\tilde g) \cdot (x-\tilde x ) \ge \mu_i |x-\tilde x |^2.
\]
Therefore
\[
(z-\tilde z ) \cdot (x-\tilde x )
\ge
\mu_i |x-\tilde x |^2.
\]
Using Hölder's inequality,
\[
|z-\tilde z | \cdot  |x-\tilde x |
\ge
(z-\tilde z) \cdot (x-\tilde x),
\]
hence
\[
|z-\tilde z | \cdot |x-\tilde x |
\ge
\mu_i |x-\tilde x |^2.
\]
If $x\neq \tilde x$, dividing by $ |x-\tilde x |$ gives
\[
|x-\tilde x | \le \frac{1}{\mu_i} |z-\tilde z |.
\]
The same inequality is trivial if $x=\tilde x$. This proves the claim.
\end{proof}

\begin{theorem}[Uniqueness of Nash equilibrium via contraction]
\label{thm:general_competitive_NE_uniqueness_monopolist}
Assume the hypotheses of Theorem~\ref{thm:general_competitive_NE_existence_monopolist} and let Assumption~\ref{ass:strong-convexity} hold. Let
\[
\mathbf{M}\triangleq \delta\,\mathbf{D}^{-1}\,\mathbf{|G|},
\]
where $|\mathbf{G}|=(|g_{ij}|)_{i,j\in\mathcal{N}}$ denotes the entrywise absolute value of $\mathbf{G}$ and $\mathbf{D}=  \operatorname{diag}(\mu_1,\mu_2,\dots,\mu_N)$. If
\[
\rho(\mathbf{M})<1,
\]
then, for every $\mathbf p \in \mathcal{P}$, the best-response map $\boldsymbol{\Gamma}(\cdot;\mathbf p):\mathcal X\to\mathcal X$ is a contraction (with respect to some weighted sup norm on $\mathbb R^N$, uniformly in $\mathbf p$). Consequently, the game admits a unique Nash equilibrium $\mathbf x^\star(\mathbf p)$.
\end{theorem}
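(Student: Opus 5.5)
The plan is to show that $\boldsymbol{\Gamma}(\cdot;\mathbf p)$ is a contraction in a weighted sup norm $\|\mathbf z\|_{\mathbf w}\triangleq\max_i |z_i|/w_i$ built from a positive vector $\mathbf w$ with $\mathbf M\mathbf w<\mathbf w$ componentwise. Banach's fixed point theorem then gives the conclusion: $\mathcal X$ is closed, hence complete, under either hypothesis of Theorem~\ref{thm:general_competitive_NE_existence_monopolist}. Existence is in fact already known, so uniqueness is what really matters.

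\textbf{Step 1 (componentwise Lipschitz bound).} The $i$-th component of the best response is
\[
\Gamma_i(\mathbf x;\mathbf p)=\mathcal T_i\bigl(a_i-b_ip_i+\delta(\mathbf G\mathbf x)_i\bigr).
\]
Applying Lemma~\ref{lem:BR_Lipschitz_general_monopolist} with $z=a_i-b_ip_i+\delta(\mathbf G\mathbf x)_i$ and $\tilde z$ defined in the same way from $\tilde{\mathbf x}$, the price terms cancel and
\[
|\Gamma_i(\mathbf x;\mathbf p)-\Gamma_i(\tilde{\mathbf x};\mathbf p)|\le \tfrac{\delta}{\mu_i}\textstyle\sum_j |g_{ij}|\,|x_j-\tilde x_j| = (\mathbf M|\mathbf x-\tilde{\mathbf x}|)_i .
\]
Because $\mathbf p$ drops out, every later constant is uniform in $\mathbf p$.

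\textbf{Step 2 (choice of weights, the main obstacle).} Since $\rho(\mathbf M)<1$ does not imply $\|\mathbf M\|<1$ in any standard norm, we need an adapted norm. $\mathbf M$ is nonnegative but possibly reducible, so Perron--Frobenius does not directly give a strictly positive eigenvector. To get around this, fix $\varepsilon>0$ small enough that $\rho(\mathbf M+\varepsilon\mathbf 1\mathbf 1^\top)<1$; this is possible by continuity of the spectral radius. The perturbed matrix is strictly positive, so Perron--Frobenius gives a vector $\mathbf w>0$ with
\[
(\mathbf M+\varepsilon\mathbf 1\mathbf 1^\top)\mathbf w=\lambda\mathbf w,\qquad \lambda<1 .
\]
Then $\mathbf M\mathbf w\le\lambda\mathbf w$ componentwise. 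An alternative choice is $\mathbf w=(\mathbf I-\mathbf M-\varepsilon\mathbf 1\mathbf 1^\top)^{-1}\mathbf 1$, obtained from the Neumann series.

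\textbf{Step 3 (contraction).} Use $|x_j-\tilde x_j|\le w_j\|\mathbf x-\tilde{\mathbf x}\|_{\mathbf w}$ and Step 1:
\[
\frac{|\Gamma_i(\mathbf x)-\Gamma_i(\tilde{\mathbf x})|}{w_i}\le \frac{(\mathbf M\mathbf w)_i}{w_i}\,\|\mathbf x-\tilde{\mathbf x}\|_{\mathbf w}\le\lambda\|\mathbf x-\tilde{\mathbf x}\|_{\mathbf w}.
\]
Taking the maximum over $i$ gives a contraction with modulus $\lambda<1$, independent of $\mathbf p$. Since $\boldsymbol\Gamma$ maps $\mathcal X$ into itself, Banach's theorem yields a unique fixed point. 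By Remark~\ref{rem:BR_fixed_point_characterization}, this fixed point is exactly the unique Nash equilibrium $\mathbf x^\star(\mathbf p)$.
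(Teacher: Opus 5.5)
Your proposal is correct and follows essentially the same route as the paper: the componentwise bound $|\boldsymbol\Gamma(\mathbf x;\mathbf p)-\boldsymbol\Gamma(\tilde{\mathbf x};\mathbf p)|\le \mathbf M|\mathbf x-\tilde{\mathbf x}|$ from Lemma~\ref{lem:BR_Lipschitz_general_monopolist}, then a weighted sup norm with $\mathbf M\mathbf w\le\lambda\mathbf w$, then Banach's theorem. The only difference is the choice of weights: the paper takes $\mathbf w=(\alpha\mathbf I_N-\mathbf M)^{-1}\mathbf 1$ for $\alpha\in(\rho(\mathbf M),1)$ via the Neumann series, which needs neither your $\varepsilon$-perturbation nor Perron--Frobenius (your Neumann-series alternative also works with $\varepsilon=0$).
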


\begin{proof}
By applying Lemma~\ref{lem:BR_Lipschitz_general_monopolist}, with $\Gamma_i(\mathbf{x}_{-i};p_i)=\mathcal T_i\big(s_i(\mathbf{x}_{-i};p_i)\big)$, for every $i\in\mathcal{N}$ we have
\[
\textstyle |\Gamma_i(\mathbf{x}_{-i};p_i)-\Gamma_i(\tilde{\mathbf{x}}_{-i};p_i)|
\le
\frac{1}{\mu_i}\,
|s_i(\mathbf{x}_{-i};p_i)-s_i(\tilde{\mathbf{x}}_{-i};p_i)|.
\]
Since
\[
\textstyle s_i(\mathbf{x}_{-i};p_i)-s_i(\tilde{\mathbf{x}}_{-i};p_i)
=
\delta \sum_{j=1}^N g_{ij}(x_j-\tilde x_j),
\]
it follows that
\[
\textstyle |\Gamma_i(\mathbf{x}_{-i};p_i)-\Gamma_i(\tilde{\mathbf{x}}_{-i};p_i)|
\le
\frac{\delta}{\mu_i}\sum_{j=1}^N |g_{ij}|\,|x_j-\tilde x_j|.
\]
Define the nonnegative matrix
\[
\mathbf{M}\triangleq \delta\,\mathbf{D}^{-1}\,|\mathbf{G}|=\delta\,\mathbf{D}^{-1}\,|\mathbf{G}|,
\]
where $|\mathbf{G}|=(|g_{ij}|)_{i,j\in\mathcal{N}}$ denotes the entrywise absolute value of $\mathbf{G}$ and $\mathbf{D}=\operatorname{diag}(\mu_1,\mu_2,\dots,\mu_N)$. Then the previous estimate can be written component-wise as
\begin{equation}\label{ineq:M_matrix}
|\boldsymbol{\Gamma}(\mathbf{x};\mathbf p)-\boldsymbol{\Gamma}(\tilde{\mathbf{x}};\mathbf p)|
\le
\mathbf{M}\,|\mathbf{x}-\tilde{\mathbf{x}}|,
\end{equation}
where $|\cdot|$ applied to a vector is also understood componentwise.

Now assume that $\rho(\mathbf{M})<1$. Fix any $\alpha\in(\rho(\mathbf{M}),1)$. Then
$\rho(\alpha^{-1}\mathbf M)=\alpha^{-1}\rho(\mathbf M)<1$, so
$\alpha \mathbf I_N-\mathbf M$ is invertible and
\[
\textstyle (\alpha \mathbf I_N-\mathbf M)^{-1}
=
\alpha^{-1}(\mathbf I_N-\alpha^{-1}\mathbf M)^{-1}
=
\sum_{k=0}^\infty \alpha^{-(k+1)}\mathbf M^k .
\]
Since $\mathbf M\ge 0$, each term in the series is componentwise nonnegative, hence
\[
\textstyle (\alpha \mathbf I_N-\mathbf M)^{-1}
=
\alpha^{-1}\mathbf I_N+\sum_{k=1}^\infty \alpha^{-(k+1)}\mathbf M^k
\ge \alpha^{-1}\mathbf I_N.
\]
Define
\[
\mathbf w\triangleq (\alpha \mathbf I_N-\mathbf M)^{-1}\mathbf 1.
\]
Then $\mathbf w\ge \alpha^{-1}\mathbf 1>0$, so $\mathbf w\in(0,\infty)^N$. Moreover, $(\alpha \mathbf I_N-\mathbf M)\mathbf w=\mathbf 1$, and therefore $\mathbf M\mathbf w=\alpha \mathbf w-\mathbf 1$. Since $\mathbf w\ge \alpha^{-1}\mathbf 1$, we have $\mathbf M\mathbf w=\alpha \mathbf w-\mathbf 1\ge 0$, and also $\mathbf M\mathbf w=\alpha \mathbf w-\mathbf 1<\alpha \mathbf w$. Hence $\mathbf 0 \le \mathbf M\mathbf w\le \alpha \mathbf w$. For this $\mathbf w$, define the weighted sup norm
\[
\textstyle \|\mathbf x\|_{\mathbf{w},\infty}:=\max_{i\in\mathcal{N}}\frac{|x_i|}{w_i},
\]
and the induced matrix norm
\[
\textstyle \|\mathbf M\|_{\mathbf{w},\infty}
:=
\max_{i\in\mathcal{N}}\frac{1}{w_i}\sum_{j=1}^N M_{ij}w_j
=
\max_{i\in\mathcal{N}}\frac{[\mathbf M\mathbf w]_i}{w_i}
\le
\max_{i\in\mathcal{N}}\frac{\alpha w_i}{w_i}
=
\alpha<1.
\]
Hence, from \eqref{ineq:M_matrix},
\[
|\boldsymbol{\Gamma}(\mathbf x;\mathbf p)-\boldsymbol{\Gamma}(\tilde{\mathbf x};\mathbf p)|_i
\le
(\mathbf M|\mathbf x-\tilde{\mathbf x}|)_i,
\qquad \forall i\in\mathcal{N}.
\]
Dividing by $w_i>0$, we get for every $i\in\mathcal{N}$,
\[
\textstyle \frac{|\Gamma_i(\mathbf x;\mathbf p)-\Gamma_i(\tilde{\mathbf x};\mathbf p)|}{w_i}
\le
\frac{1}{w_i}\sum_{j=1}^N M_{ij}|x_j-\tilde x_j|.
\]
Now, by definition of $\|\cdot\|_{\mathbf{w},\infty}$,
\[
|x_j-\tilde x_j|
\le
\|\mathbf x-\tilde{\mathbf x}\|_{\mathbf{w},\infty}\, w_j,
\qquad \forall j\in\mathcal{N}.
\]
Substituting this bound into the previous display yields
\[
\textstyle \frac{|\Gamma_i(\mathbf x;\mathbf p)-\Gamma_i(\tilde{\mathbf x};\mathbf p)|}{w_i}
\le
\left(\frac{1}{w_i}\sum_{j=1}^N M_{ij} w_j\right)
\|\mathbf x-\tilde{\mathbf x}\|_{\mathbf{w},\infty}.
\]
Taking the maximum over $i\in\mathcal{N}$, we obtain
\[
\textstyle \|\boldsymbol{\Gamma}(\mathbf x;\mathbf p)-\boldsymbol{\Gamma}(\tilde{\mathbf x};\mathbf p)\|_{\mathbf{w},\infty}
\le
\left(\max_{i\in\mathcal{N}}\frac{1}{w_i}\sum_{j=1}^N M_{ij}w_j\right)
\|\mathbf x-\tilde{\mathbf x}\|_{\mathbf{w},\infty}.
\]
By the definition of the induced weighted sup norm,
\[
\textstyle \max_{i\in\mathcal{N}}\frac{1}{w_i}\sum_{j=1}^N M_{ij}w_j
=
\|\mathbf M\|_{\mathbf{w},\infty},
\]
and therefore
\[
\|\boldsymbol{\Gamma}(\mathbf x;\mathbf p)-\boldsymbol{\Gamma}(\tilde{\mathbf x};\mathbf p)\|_{\mathbf{w},\infty}
\le
\|\mathbf M\|_{\mathbf{w},\infty}\,
\|\mathbf x-\tilde{\mathbf x}\|_{\mathbf{w},\infty}.
\]
Therefore,
\[
\|\boldsymbol{\Gamma}(\mathbf x;\mathbf p)-\boldsymbol{\Gamma}(\tilde{\mathbf x};\mathbf p)\|_{\mathbf{w},\infty}
\le
\alpha\|\mathbf x-\tilde{\mathbf x}\|_{\mathbf{w},\infty},
\]
so $\boldsymbol{\Gamma}(\cdot;\mathbf p)$ is a contraction.Thus $\boldsymbol{\Gamma}(\cdot;\mathbf p)$ is a contraction on the complete metric space $(\mathcal X,\|\cdot\|_{\mathbf{w},\infty})$, with contraction constant independent of $\mathbf p$. By Banach's fixed-point theorem, for every $\mathbf p \in \mathcal{P}$, $\boldsymbol{\Gamma}(\cdot;\mathbf p)$ admits a unique fixed point $\mathbf{x}^\star(\mathbf p)\in\mathcal X$. Since fixed points of $\boldsymbol{\Gamma}(\cdot;\mathbf p)$ are exactly profiles of mutual best responses, $\mathbf{x}^\star(\mathbf p)$ is the unique Nash equilibrium.
\end{proof}

\begin{remark}[Why the weighted sup norm is important]
\label{rem:weighted_norm_importance_monopolist}
The contraction condition $\rho(\mathbf{M})<1$ is the intrinsic condition in our setting, in the sense that it does not depend on the choice of a norm. Indeed, \textit{\textbf{the spectral radius is the quantity that lies below every sub-multiplicative matrix norm}}, in the sense that
\[
\rho(\mathbf{M})\leq \|\mathbf{M}\|
\]
for every sub-multiplicative matrix norm $\|\cdot\|$. To see this, fix any norm $\|\cdot \|$ such that $\| \mathbf{M} \mathbf{v}\|\leq \| \mathbf{M}\| \| \mathbf{v}\|$ for all $\mathbf{M} \in \mathbb{R}^{N \times N}$ and $\mathbf{v} \in \mathbb{R}^N$ (i.e. $\| \cdot \|$ is sub-multiplicative) and let $\lambda \in \sigma (\mathbf{M})$ be any eigenvalue of $\mathbf{M}$ with eigenvector $\mathbf{v}$, then
$$
|\lambda|\|\mathbf{v}\|=\|\lambda \mathbf{v}\|=\|\mathbf{M} \mathbf{v}\| \leq\|\mathbf{M}\|\|\mathbf{v}\|,
$$
so $|\lambda| \leq \| \mathbf{M} \|$, and taking the maximum over all the eigenvalues we get 
$$
\rho ( \mathbf{M} ) \triangleq \max \{ |\lambda|: \lambda \in \sigma( \mathbf{M})\} \leq \| \mathbf{M} \|.
$$
Thus, if one fixes a norm from the beginning and applies Banach's fixed-point theorem directly, one is naturally led to a sufficient condition of the form
\[
\|\mathbf{M}\|<1,
\]
which may be strictly stronger than $\rho(\mathbf{M})<1$. The difficulty is that Banach's fixed-point theorem is formulated in terms of a norm, whereas the spectral radius itself does not define a norm. One is therefore tempted to choose a standard norm, such as $\|\cdot\|_\infty$, $\|\cdot\|_1$, or $\|\cdot\|_2$, and prove contraction under the corresponding stronger requirement
\[
\|\mathbf{M}\|_\infty<1,\qquad
\|\mathbf{M}\|_1<1,\qquad
\text{or}\qquad
\|\mathbf{M}\|_2<1.
\]
However, these are only sufficient conditions, and can be substantially more restrictive than the contraction condition, as we show in the following examples. For instance, fix $\mathbf{D} = \mathbf{I}_N$ as in the Linear-Quadratic Utility Model in Example~\ref{example_complete_Linear}, and consider
\[
\mathbf G=
\begin{pmatrix}
0 & V\\
0 & 0
\end{pmatrix},
\qquad V>0.
\]
Then $\rho(|\mathbf G|)=\rho(\mathbf G)=0$, so the contraction condition $\delta \rho(|\mathbf G|)<1$ is automatically satisfied for every $\delta>0$. 
However, $\|\mathbf G\|_1=V$, which implies that a norm-based condition such as $\delta\|\mathbf G\|_1<1$ holds for $\delta< 1/V$, which is more restrictive than necessary. As a symmetric example, consider
\[
\mathbf G=
\begin{pmatrix}
0 & 1 & 1\\
1 & 0 & 0\\
1 & 0 & 0
\end{pmatrix}.
\]
Then $\mathbf G$ is symmetric, so $\rho(|\mathbf G|)=\rho(\mathbf G)=\lambda_{\max}(\mathbf G)=\sqrt{2}$, whereas $\|\mathbf G\|_1=2$. Thus, the norm-based condition $\delta<1/2$ is strictly stronger than the sharp contraction condition (or variational condition, since they coincide when $\mathbf{G}$ is symmetric and entry-wise positive), i.e., $\delta<\frac{1}{\sqrt{2}}$. This illustrates again why the weighted norm construction is essential to obtain the least conservative contraction criterion.

The role of the weighted sup norm $\|\cdot\|_{\mathbf{w},\infty}$ is precisely to provide the correct bridge between the spectral-radius assumption and Banach's theorem. Starting from $\rho(\mathbf{M})<1$, one can choose $\alpha\in(\rho(\mathbf{M}),1)$ and construct a vector $\mathbf{w}\in(0,\infty)^N$ such that
\[
\|\mathbf{M}\|_{\mathbf{w},\infty}\leq \alpha<1.
\]
In this way, the contraction argument is carried out in a normed space, as required by Banach's theorem, but without replacing the intrinsic condition $\rho(\mathbf{M})<1$ by a stronger norm bound. In this sense, the construction of $\|\cdot\|_{\mathbf{w},\infty}$ allows one to recover a condition that is uniform over all possible choices of norm, rather than tied to a particular one.
\end{remark}

\subsection{Uniqueness of Nash equilibrium via variational inequality}\label{appendix:uniqueness-variational}

We now provide a uniqueness result that characterizes the Nash equilibrium as the solution of a variational inequality and prove uniqueness by strict monotonicity of the corresponding operator, under Assumption~\ref{ass:strong-convexity}, i.e., strong convexity of $h_i$ for all $i\in \mathcal{N}$.

For every $\mathbf{x} \in \mathcal{X}$ and $\mathbf{p} \in \mathcal{P}$, define
\[
\mathbf F(\mathbf x;\mathbf p)
\triangleq
\boldsymbol{\phi}(\mathbf x)-\delta \mathbf G\mathbf x-(\mathbf a-\mathbf B\mathbf p),
\]
and recall from Remark~\ref{rem:BR_fixed_point_characterization} that, under strong convexity of $h_i$ for all $i\in \mathcal{N}$, that for every $\mathbf{p} \in \mathcal{P}$, a NE can be found by solving $F(\mathbf x;\mathbf p)= \mathbf{0}$ with respect to $\mathbf{x}$, and projecting the solution into $\mathcal{X}$.

\begin{remark}[Variational characterization of Nash equilibrium]
\label{rem:VI_characterization_NE}
Fix $\mathbf p \in \mathcal{P}$. Since $h_i$ is strictly convex, the function
\[
x_i\mapsto s_i(\mathbf x_{-i};p_i)x_i-h_i(x_i)
\]
is strictly concave on $\mathcal X_i$, and therefore player $i$'s best response is unique. Moreover, following the definition~\ref{def:NE} of NE via variational inequality, for every fixed $\mathbf{p}$, a point $\mathbf x^\star= \mathbf x^{\star}(\mathbf{p})\in\mathcal X$ is a Nash equilibrium if and only if, for every $i\in\mathcal{N}$,
\[
\bigl(\phi_i(x_i^\star)-a_i+b_ip_i-\delta(\mathbf G\mathbf x^\star)_i\bigr)(y_i-x_i^\star)\ge 0,
\qquad \forall y_i\in\mathcal X_i.
\]
Stacking these conditions across $i$, we see that $\mathbf x^\star$ is a Nash equilibrium if and only if
\begin{equation}\label{eq:VI_NE}
\langle \mathbf F(\mathbf x^\star;\mathbf p),\mathbf y-\mathbf x^\star\rangle \ge 0,
\qquad \forall \mathbf y\in\mathcal X.
\end{equation}
Thus, a Nash equilibrium is exactly a solution of the variational inequality \eqref{eq:VI_NE}.

\end{remark}

The next theorem gives a sufficient condition for uniqueness.

\begin{theorem}[Uniqueness of Nash equilibrium via strict monotonicity]
\label{thm:general_competitive_NE_uniqueness_VI}
Assume the hypotheses of Theorem~\ref{thm:general_competitive_NE_existence_monopolist} and let Assumption~\ref{ass:strong-convexity} hold. Define $\mathbf D\triangleq \operatorname{diag}(\mu_1,\dots, \mu_N)$. If
\begin{equation}\label{eq:strict_monotonicity_condition}
\textstyle \mathbf z^\top
\left(
\mathbf D-\delta\,\frac{\mathbf G+\mathbf G^\top}{2}
\right)
\mathbf z>0,
\qquad \forall \mathbf z\neq 0,
\end{equation}
then, for every $\mathbf p \in \mathcal{P}$, the game admits a unique Nash equilibrium.
\end{theorem}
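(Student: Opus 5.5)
Existence of a Nash equilibrium for each fixed $\mathbf p\in\mathcal P$ is given by Theorem~\ref{thm:general_competitive_NE_existence_monopolist}, so only uniqueness needs proof. I will use the variational characterization of Remark~\ref{rem:VI_characterization_NE}: a profile $\mathbf x^\star$ is a Nash equilibrium if and only if it solves the variational inequality \eqref{eq:VI_NE} with operator $\mathbf F(\cdot;\mathbf p)$. Uniqueness will then follow from the standard fact that a strictly monotone operator admits at most one solution of a VI.

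\textbf{Step 1: strong monotonicity of $\mathbf F(\cdot;\mathbf p)$.} Fix $\mathbf x,\tilde{\mathbf x}\in\mathcal X$ and set $\mathbf z=\mathbf x-\tilde{\mathbf x}$. The affine term $\mathbf a-\mathbf B\mathbf p$ cancels, so
\[
\langle \mathbf F(\mathbf x;\mathbf p)-\mathbf F(\tilde{\mathbf x};\mathbf p),\mathbf z\rangle=\sum_{i=1}^N(\phi_i(x_i)-\phi_i(\tilde x_i))z_i-\delta\,\mathbf z^\top\mathbf G\mathbf z .
\]
By Assumption~\ref{ass:strong-convexity}, $\phi_i=h_i'$ is $\mu_i$-strongly monotone. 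This follows by adding the two strong-convexity inequalities with $x,y$ swapped, exactly as in the proof of Lemma~\ref{lem:BR_Lipschitz_general_monopolist}, and gives $(\phi_i(x_i)-\phi_i(\tilde x_i))z_i\ge\mu_i z_i^2$. Together with $\mathbf z^\top\mathbf G\mathbf z=\mathbf z^\top\frac{\mathbf G+\mathbf G^\top}{2}\mathbf z$, this yields
\[
\langle \mathbf F(\mathbf x;\mathbf p)-\mathbf F(\tilde{\mathbf x};\mathbf p),\mathbf x-\tilde{\mathbf x}\rangle\ \ge\ \mathbf z^\top\Bigl(\mathbf D-\delta\tfrac{\mathbf G+\mathbf G^\top}{2}\Bigr)\mathbf z .
\]
The right-hand side is strictly positive whenever $\mathbf z\ne\mathbf 0$, by \eqref{eq:strict_monotonicity_condition}. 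Hence $\mathbf F(\cdot;\mathbf p)$ is strictly monotone. In fact it is strongly monotone with constant $\lambda_{\min}$ of the symmetric matrix, which is positive in finite dimension.

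\textbf{Step 2: uniqueness.} Suppose $\mathbf x^\star$ and $\tilde{\mathbf x}^\star$ both solve \eqref{eq:VI_NE}. Test the first VI with $\mathbf y=\tilde{\mathbf x}^\star$ and the second with $\mathbf y=\mathbf x^\star$, then add the two inequalities. This gives $\langle \mathbf F(\mathbf x^\star;\mathbf p)-\mathbf F(\tilde{\mathbf x}^\star;\mathbf p),\mathbf x^\star-\tilde{\mathbf x}^\star\rangle\le 0$. By Step 1 this quantity is strictly positive unless $\mathbf x^\star=\tilde{\mathbf x}^\star$, so the equilibrium is unique. Since $\mathbf p\in\mathcal P$ was arbitrary, the conclusion holds for every price vector.

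\textbf{Main obstacle.} The only delicate point is making the strong monotonicity of $\phi_i$ on $\mathcal X_i$ rigorous and checking that the VI characterization applies. On the first point, Assumption~\ref{ass:strong-convexity} is stated on $\mathbb R$, while $h_i$ may be defined only on $\mathcal X_i$, so I would restrict the swap-and-add argument to points of $\mathcal X_i$, which suffices. On the second point, the VI characterization requires $h_i$ to be $C^1$, which Assumption~\ref{ass:strong-convexity} provides. Everything else is routine algebra.
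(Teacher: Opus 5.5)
Your proposal is correct and matches the paper's proof essentially step for step. You establish strict monotonicity of $\mathbf F(\cdot;\mathbf p)$ from the $\mu_i$-strong monotonicity of $\phi_i$ and the identity $\mathbf z^\top\mathbf G\mathbf z=\mathbf z^\top\frac{\mathbf G+\mathbf G^\top}{2}\mathbf z$, obtain uniqueness by adding the two variational inequalities, and take existence from Theorem~\ref{thm:general_competitive_NE_existence_monopolist}; your extra remarks (the swap-and-add derivation and the uniform constant $\lambda_{\min}$) only make explicit details the paper leaves implicit.
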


\begin{proof}
Fix $\mathbf p \in \mathcal{P}$, and let $\mathbf x,\mathbf y\in\mathcal X$. Then
\[
\mathbf F(\mathbf x;\mathbf p)-\mathbf F(\mathbf y;\mathbf p)
=
\boldsymbol{\phi}(\mathbf x)-\boldsymbol{\phi}(\mathbf y)-\delta \mathbf G(\mathbf x-\mathbf y).
\]
Therefore,
\begin{align*}
\textstyle \langle \mathbf F(\mathbf x;\mathbf p)-\mathbf F(\mathbf y;\mathbf p),\mathbf x-\mathbf y\rangle
=
\sum_{i=1}^N
\bigl(\phi_i(x_i)-\phi_i(y_i)\bigr)(x_i-y_i)
-\delta (\mathbf x-\mathbf y)^\top \mathbf G(\mathbf x-\mathbf y).
\end{align*}
Since
\[
\textstyle \mathbf z^\top \mathbf G \mathbf z
=
\mathbf z^\top \frac{\mathbf G+\mathbf G^\top}{2}\mathbf z,
\qquad \forall \mathbf z\in\mathbb R^N,
\]
and by Assumption~\ref{ass:strong-convexity},
\[
\bigl(\phi_i(x_i)-\phi_i(y_i)\bigr)(x_i-y_i)\ge \mu_i|x_i-y_i|^2,
\]
we obtain
\begin{align*}
\textstyle \langle \mathbf F(\mathbf x;\mathbf p)-\mathbf F(\mathbf y;\mathbf p),\mathbf x-\mathbf y\rangle
\ge
(\mathbf x-\mathbf y)^\top
\left(
\mathbf D-\delta\,\frac{\mathbf G+\mathbf G^\top}{2}
\right)
(\mathbf x-\mathbf y).
\end{align*}
Hence, by \eqref{eq:strict_monotonicity_condition},
\[
\langle \mathbf F(\mathbf x;\mathbf p)-\mathbf F(\mathbf y;\mathbf p),\mathbf x-\mathbf y\rangle>0,
\qquad \forall \mathbf x\neq \mathbf y,
\]
so $\mathbf F(\cdot;\mathbf p)$ is strictly monotone on $\mathcal X$. Now let $\mathbf x^\star,\tilde{\mathbf x}^\star\in\mathcal X$ be two solutions of $\mathrm{VI}(\mathcal X,\mathbf F(\cdot;\mathbf p))$. Then
\[
\langle \mathbf F(\mathbf x^\star;\mathbf p),\tilde{\mathbf x}^\star-\mathbf x^\star\rangle\ge 0,
\qquad
\langle \mathbf F(\tilde{\mathbf x}^\star;\mathbf p),\mathbf x^\star-\tilde{\mathbf x}^\star\rangle\ge 0.
\]
Adding the two inequalities yields
\[
\langle \mathbf F(\mathbf x^\star;\mathbf p)-\mathbf F(\tilde{\mathbf x}^\star;\mathbf p),\mathbf x^\star-\tilde{\mathbf x}^\star\rangle\le 0.
\]
By strict monotonicity, this is possible only if $\mathbf x^\star=\tilde{\mathbf x}^\star$. Therefore, the variational inequality admits at most one solution. Since existence was established in Theorem~\ref{thm:general_competitive_NE_existence_monopolist}, the Nash equilibrium exists and is unique.
\end{proof}

\subsection{Contraction and variational conditions coincide for $\mathbf G$ symmetric and non-negative}
\label{subsec:contraction-variational-coincide}

Using the linear--quadratic and discrete-choice models, the following examples illustrate that both uniqueness criteria coincide whenever the network is symmetric and entry-wise non-negative.

\begin{example}[The Linear-Quadratic Utility Model.]\label{example_complete_Linear}
We continue from the Linear-Quadratic Utility Model in Example~\ref{example_1}. In this case, all consumers share the same regularization function $h_i(x)=h(x)=\frac{1}{2}x^2$, which is $1$-strongly convex on $\mathbb{R}$, therefore, the monotonicity constant is $\mu_i=1$ for every $i$, and $\mathbf D=\mathbf I_N$. Consequently $\phi_i = (\nabla h_i)^{-1}=\mathrm{Id}$, so, conditional on $\mathbf p \in \mathcal{P}$, the equilibrium condition in \eqref{FOC:consumer-global} solves $\mathbf a-\mathbf B\mathbf p+\delta \mathbf G\mathbf x-\mathbf x=\mathbf 0$. Therefore, whenever $\mathbf I_N-\delta \mathbf G$ is invertible, the consumer Nash equilibrium satisfies $\mathbf x^*(\mathbf p)
=
\Pi_{\mathcal X} ((\mathbf I_N-\delta \mathbf G)^{-1}(\mathbf a-\mathbf B\mathbf p))$. Since $h$ is $1$-strongly convex, Theorem~\ref{thm:uniqueness_NE} implies that, for every $\mathbf p \in \mathcal{P}$, the consumer game admits a unique Nash equilibrium whenever $\delta\rho(\,\mathbf{|G|})<1$ or $\delta\,\lambda_{\max} (\tfrac{\mathbf G+\mathbf G^\top}{2})<1$. In particular, if $\mathbf G$ is symmetric and with non-negative entries, $\mathbf{|G|}= \tfrac{\mathbf{G}+\mathbf{G^{\top}}}{2}= \mathbf{G}$, then $\rho(\mathbf{G}) = \lambda_{\max}(\mathbf{G})$ and two conditions reduce to $\delta\,\lambda_{\max}(\mathbf G)<1$.
\end{example}

\begin{example}[Discrete Choice Model.]\label{example_complete_Discrete}
We continue from the Discrete Choice Model in Example~\ref{example_1}. Similar to Example~\ref{example_complete_Linear}, using that $ h\left(x\right)=x \ln x+\left(1-x\right) \ln \left(1-x\right)$, $x \in [0,1]$ satisfies $h^{\prime \prime}(x)=\frac{1}{x(1-x)}\geq 4$, it is $4$-strongly convex in $[0,1]$, and then $\mathbf D=4\mathbf I_N$. Hence the contraction condition becomes $\delta\rho(\mathbf{|G|})<4$ and the variational condition $\delta\,\lambda_{\max}(\frac{\mathbf G+\mathbf G^\top}{2})<4$. Similarly to the Example~\ref{example_complete_Linear}, we have that, if $\mathbf G$ is symmetric and with non-negative entries, then the two conditions reduce to $\delta\,\lambda_{\max}(\mathbf G)<4$.
\end{example}

\subsection{More examples showing that the contraction and variational conditions are not comparable}\label{sec:contactio_variational_examples}

\textbf{Example 1: contraction condition weaker than variational condition.} Fix $\mathbf D=\mathbf I$. Consider first

\begin{figure}[h]
\centering
\begin{minipage}[c]{0.38\textwidth}
\centering
\[
\mathbf G=
\begin{pmatrix}
0 & V & V & V\\
0 & 0 & 0 & 0\\
0 & 0 & 0 & 0\\
0 & 0 & 0 & 0
\end{pmatrix},
\qquad V>0
\]
\end{minipage}
\hfill
\begin{minipage}[c]{0.48\textwidth}
\centering
\begin{tikzpicture}[
  >=Latex,
  node/.style={circle, draw=black, thick, fill=black!8, font=\scriptsize, inner sep=1.2pt},
  edge/.style={->, draw=black!60, line width=0.8pt},
  elabel/.style={font=\tiny, fill=white, inner sep=0.8pt}
]
\node[node, minimum size=9mm]   (1) at (0,0.2) {$1$};
\node[node, minimum size=6.5mm] (2) at (-1.7,1.1) {$2$};
\node[node, minimum size=6.5mm] (3) at (-1.8,-0.9) {$3$};
\node[node, minimum size=6.5mm] (4) at (1.8,0.2) {$4$};

\draw[edge] (2) -- node[elabel, above left] {$V$} (1);
\draw[edge] (3) -- node[elabel, below left] {$V$} (1);
\draw[edge] (4) -- node[elabel, above] {$V$} (1);
\end{tikzpicture}
\end{minipage}
\caption{Network associated with the matrix $\mathbf G$.}
\label{fig:network_one_influencer}
\end{figure}

which network is represented in Figure~\ref{fig:network_one_influencer}. Since $\mathbf G$ is nilpotent, $\rho(|\mathbf G|)=0$, so the contraction condition is automatically satisfied for every $\delta>0$. However,
\[
\frac{\mathbf G+\mathbf G^\top}{2}
=
\begin{pmatrix}
0 & V/2 & V/2 & V/2\\
V/2 & 0 & 0 & 0\\
V/2 & 0 & 0 & 0\\
V/2 & 0 & 0 & 0
\end{pmatrix},
\]
whose largest eigenvalue is $\frac{\sqrt{3}}{2}V$. Hence, the variational condition requires
\[
\delta<\frac{2}{\sqrt{3}\,V},
\]
and is therefore strictly stronger.

\textbf{Example 2: variational condition weaker than contraction condition.} Conversely, consider

\begin{figure}[ht]
\centering
\begin{minipage}[c]{0.34\textwidth}
\centering
\[
\mathbf G=
\begin{pmatrix}
0 & V & -V\\
-V & 0 & V\\
V & -V & 0
\end{pmatrix},
\qquad V>0
\]
\end{minipage}
\hfill
\begin{minipage}[c]{0.52\textwidth}
\centering
\begin{tikzpicture}[
  >=Latex,
  node/.style={circle, draw=black, thick, fill=black!8, font=\scriptsize, inner sep=1.2pt},
  edge/.style={->, draw=black!60, line width=0.8pt},
  elabel/.style={font=\tiny, fill=white, inner sep=0.8pt}
]
\node[node, minimum size=7mm] (1) at (0,1.5) {$1$};
\node[node, minimum size=7mm] (2) at (-1.6,-0.8) {$2$};
\node[node, minimum size=7mm] (3) at (1.6,-0.8) {$3$};

\draw[edge, bend left=18] (2) to node[elabel, left] {$V$} (1);
\draw[edge, bend left=18] (1) to node[elabel, left] {$-V$} (2);

\draw[edge, bend left=18] (3) to node[elabel, below] {$V$} (2);
\draw[edge, bend left=18] (2) to node[elabel, below] {$-V$} (3);

\draw[edge, bend left=18] (1) to node[elabel, right] {$V$} (3);
\draw[edge, bend left=18] (3) to node[elabel, right] {$-V$} (1);
\end{tikzpicture}
\end{minipage}
\caption{Signed skew-symmetric triangle associated with the matrix $\mathbf G$.}
\label{fig:network_circle}
\end{figure}

which network is represented in Figure~\ref{fig:network_circle}. This matrix is skew-symmetric, so
\[
\frac{\mathbf G+\mathbf G^\top}{2}=0,
\]
and the variational condition is automatically satisfied for every $\delta>0$. On the other hand,
\[
|\mathbf G|=
\begin{pmatrix}
0 & V & V\\
V & 0 & V\\
V & V & 0
\end{pmatrix},
\]
whose spectral radius is $2V$. Therefore the contraction condition requires
\[
\delta<\frac{1}{2V},
\]
and is thus strictly stronger.

These examples show that, in the absence of symmetry and sign restrictions on $\mathbf G$, neither condition dominates the other in general.

\begin{remark}
The contraction condition and the varitional condition emphasize different aspects of the network. The contraction condition controls the magnitude of all spillovers, regardless of their sign, and is therefore particularly well suited to networks with weak feedback loops, such as triangular or nilpotent structures. By contrast, the variational condition depends only on the symmetric part of $G$, and is therefore insensitive to purely directional or antisymmetric interactions. As a result, it is especially favorable in networks where influence is largely one-way or sign-changing, since those features disappear after symmetrization. 
\end{remark}

\subsection{Sufficient Conditions via Gershgorin disc theorem.}\label{remark:Gershgorin}

The contraction condition and the variational conditions can also be interpreted through the Gershgorin disc theorem. Recall that, for any square matrix $\mathbf M=(m_{ij})$, every eigenvalue of $\mathbf M$ lies in at least one disc
\[
\textstyle \left\{z\in\mathbb C:\ |z-m_{ii}|\le \sum_{j\neq i}|m_{ij}|\right\}.
\]
Therefore, Gershgorin's theorem allows one to replace global spectral conditions with simpler row-wise dominance conditions. In the present setting, this yields a transparent customer-level interpretation of both uniqueness assumptions.

For the contraction condition, let
\[
\mathbf M:=\delta \mathbf D^{-1}|\mathbf G|.
\]
Since $g_{ii}=0$, the diagonal entries of $\mathbf M$ are zero, and the $i$-th Gershgorin disc is centered at the origin with radius
\[
\textstyle  \sum_{j\neq i}\frac{\delta |g_{ij}|}{\mu_i}.
\]
Hence a sufficient condition for all eigenvalues of $\mathbf M$ to lie strictly inside the unit disc, and therefore for $\rho(\mathbf M)<1$, is
\[
\textstyle  \frac{\delta}{\mu_i}\sum_{j\neq i}|g_{ij}|<1,
\qquad \forall i\in\mathcal N,
\]
or equivalently
\[
\textstyle  \mu_i>\delta\sum_{j\neq i}|g_{ij}|,
\qquad \forall i\in\mathcal N.
\]
This shows that the contraction condition holds whenever, for each consumer $i$, the stabilizing force generated by diminishing returns exceeds the total absolute influence exerted by that consumer's neighbors. In this sense, the best-response effect of the network is locally damped at each node, so that a shock to one consumer's demand cannot be amplified indefinitely through repeated interactions.

For the variational condition, define
\[
\textstyle  \mathbf H:=\mathbf D-\delta\frac{\mathbf G+\mathbf G^\top}{2}.
\]
Because $\mathbf H$ is symmetric, a convenient sufficient condition for positive definiteness is strict diagonal dominance with positive diagonal entries. By Gershgorin's theorem, this is ensured if
\[
\textstyle  \mu_i
>
\delta \sum_{j\neq i}\left|\frac{g_{ij}+g_{ji}}{2}\right|,
\qquad \forall i\in\mathcal N.
\]
Thus, for every consumer $i$, the individual curvature $\mu_i$ must dominate the total magnitude of the \emph{symmetric} interaction with the rest of the network. Economically, this means that the mutually reinforcing part of the local network effects around consumer $i$ is not strong enough to offset the stabilizing effect of diminishing returns.

Therefore, Gershgorin's theorem gives a simple node-level interpretation of both conditions: uniqueness is guaranteed if, at every consumer, the stabilizing force coming from the curvature of utility is stronger than the aggregate social influence coming from the network. The difference between the two conditions is that the contraction condition uses the full absolute row sum $\sum_j |g_{ij}|$, while the variational condition uses only the symmetric component $\sum_j |(g_{ij}+g_{ji})/2|$. Hence the contraction condition is generally more conservative, whereas the variational condition ignores purely directional asymmetries that do not contribute to mutual reinforcement.
\newpage

\section{Missing Proofs}\label{app:missing-proof}

\subsection{Proof of Theorem~\ref{thm:comparison}}\label{app:missing-proof-comparison}
\begin{proof}[Proof of Theorem~\ref{thm:comparison}]
Since $\mathbf G$ is symmetric and entry-wise non-negative, we have
\[
|\mathbf G|=\mathbf G,
\qquad
\frac{\mathbf G+\mathbf G^\top}{2}=\mathbf G.
\]
Therefore, the contraction condition becomes $\rho(\delta \mathbf D^{-1}\mathbf G)<1$, which is equivalent to $\delta\,\rho(\mathbf D^{-1}\mathbf G)<1$. On the other hand, the variational condition becomes $\mathbf D-\delta \mathbf G\succ \mathbf{O}_N$. Since $\mathbf D$ is diagonal with positive entries, this is equivalent to $\mathbf I-\delta \mathbf D^{-1/2}\mathbf G\mathbf D^{-1/2}\succ \mathbf{O}_N$. Because $\mathbf D^{-1/2}\mathbf G\mathbf D^{-1/2}$ is symmetric, the latter holds if and only if $\lambda_{\max} \left(\delta \mathbf D^{-1/2}\mathbf G\mathbf D^{-1/2}\right)<1$. Since $\mathbf D^{-1/2}\mathbf G\mathbf D^{-1/2}$ is a real value symmetric matrix, the eigenvalues are real, and its largest eigenvalue equals its spectral radius, that is $\lambda_{\max} \left(\mathbf D^{-1/2}\mathbf G\mathbf D^{-1/2}\right) = \rho \left(\mathbf D^{-1/2}\mathbf G\mathbf D^{-1/2}\right)$. Because $\mathbf D^{-1/2}\mathbf G\mathbf D^{-1/2}$ is similar to $\mathbf D^{-1}\mathbf G$ (with orthogonal matrix $\mathbf D^{-1/2}$), we obtain $\rho\left(\mathbf D^{-1/2}\mathbf G\mathbf D^{-1/2}\right)
=
\rho(\mathbf D^{-1}\mathbf G)$. Hence $\lambda_{\max} \left(\mathbf D^{-1/2}\mathbf G\mathbf D^{-1/2}\right)
=
\rho(\mathbf D^{-1}\mathbf G)$, and therefore the variational condition is also equivalent to $\delta\,\rho(\mathbf D^{-1}\mathbf G)<1$. Thus, in the symmetric and entry-wise non-negative case, the contraction condition and the variational condition coincide with the condition $\delta \,\rho(\mathbf D^{-1}\mathbf G)<1$.
\end{proof}

\newpage

\subsection{On the invertibility of $\mathbf J(\mathbf x)$}\label{sec:invertibility_of_J}

\subsubsection{On the invertibility of $\mathbf J(\mathbf x)$ via variational condition}\label{sec:invertibility_of_J-variational}

\begin{proposition}
\label{prop:invertibility_jacobian_phi}
Let
\[
\boldsymbol{\phi}(\mathbf x)
=
\bigl(\phi_1(x_1),\dots,\phi_N(x_N)\bigr)^\top,
\qquad
D_{\boldsymbol{\phi}}(\mathbf x)
:=
\operatorname{diag}\bigl(\phi_1'(x_1),\dots,\phi_N'(x_N)\bigr),
\]
and fix $\mathbf x\in\mathbb R^N$. If
\begin{equation}\label{eq:local_variational_condition}
D_{\boldsymbol{\phi}}(\mathbf x)-\delta\,\frac{\mathbf G+\mathbf G^\top}{2}\succ \mathbf{O}_N,
\end{equation}
then the matrix
\[
D_{\boldsymbol{\phi}}(\mathbf x)-\delta \mathbf G
\]
is invertible.
\end{proposition}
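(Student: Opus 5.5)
The argument is a direct kernel argument, using only that a real quadratic form depends only on the symmetric part of its matrix. Write $\mathbf A \triangleq D_{\boldsymbol{\phi}}(\mathbf x)-\delta\mathbf G$. Since $\mathbf A$ is a square real $N\times N$ matrix, invertibility is equivalent to $\ker \mathbf A=\{\mathbf 0\}$. So it suffices to show that $\mathbf A\mathbf z=\mathbf 0$ forces $\mathbf z=\mathbf 0$.

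The key identity is that, for every $\mathbf z\in\mathbb R^N$,
\[
\mathbf z^\top \mathbf A\mathbf z=\mathbf z^\top\Bigl(D_{\boldsymbol{\phi}}(\mathbf x)-\delta\,\tfrac{\mathbf G+\mathbf G^\top}{2}\Bigr)\mathbf z .
\]
This holds because $\mathbf z^\top\mathbf G\mathbf z=\mathbf z^\top\mathbf G^\top\mathbf z$ (the same observation used in the proof of Theorem~\ref{thm:general_competitive_NE_uniqueness_VI}), and because $D_{\boldsymbol{\phi}}(\mathbf x)$ is diagonal and hence already symmetric. Now suppose $\mathbf A\mathbf z=\mathbf 0$. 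Then $\mathbf z^\top\mathbf A\mathbf z=0$, so by the identity the quadratic form of the symmetric part vanishes at $\mathbf z$. By the positive definiteness hypothesis \eqref{eq:local_variational_condition}, this forces $\mathbf z=\mathbf 0$. Hence $\mathbf A$ is injective, and therefore invertible.

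As an optional quantitative strengthening, let $\lambda>0$ be the smallest eigenvalue of the symmetric part. Cauchy--Schwarz then gives $\|\mathbf A\mathbf z\|_2\|\mathbf z\|_2\ge \mathbf z^\top\mathbf A\mathbf z\ge\lambda\|\mathbf z\|_2^2$, which yields $\|\mathbf A^{-1}\|_2\le 1/\lambda$. This bound would be useful later, for instance under Assumption~\ref{ass:strong-concavity-estim} with $\lambda\ge m$. There is no real obstacle here. The only point needing care is to note that the hypothesis concerns only the symmetric part, so one must argue through the quadratic form rather than through eigenvalues of $\mathbf A$, which may be complex because $\mathbf A$ is nonsymmetric.
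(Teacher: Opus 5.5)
Your proof is correct and follows essentially the same argument as the paper. The paper likewise assumes a nonzero $\mathbf v$ with $\mathbf J(\mathbf x)\mathbf v=\mathbf 0$, notes that the antisymmetric part of $\mathbf G$ vanishes in the quadratic form, and so contradicts the positive definiteness of $D_{\boldsymbol{\phi}}(\mathbf x)-\delta\frac{\mathbf G+\mathbf G^\top}{2}$; your optional bound $\|\mathbf A^{-1}\|_2\le 1/\lambda$ is a valid extra that the paper does not state.
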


\begin{proof}
Define
\[
\mathbf J(\mathbf x):=D_{\boldsymbol{\phi}}(\mathbf x)-\delta \mathbf G.
\]
We prove that $\mathbf J(\mathbf x)$ has trivial kernel. Suppose by contradiction that there exists $\mathbf v\neq \mathbf 0$ such that
\[
\mathbf J(\mathbf x)\mathbf v=\mathbf 0.
\]
Multiplying on the left by $\mathbf v^\top$, we get
\[
0
=
\mathbf v^\top \mathbf J(\mathbf x)\mathbf v
=
\mathbf v^\top D_{\boldsymbol{\phi}}(\mathbf x)\mathbf v
-
\delta\,\mathbf v^\top \mathbf G \mathbf v.
\]
Now observe that
\[
\mathbf v^\top \mathbf G \mathbf v
=
\mathbf v^\top \frac{\mathbf G+\mathbf G^\top}{2}\mathbf v,
\]
because the anti-symmetric part of $\mathbf G$ vanishes in the quadratic form. Therefore,
\[
0
=
\mathbf v^\top
\left(
D_{\boldsymbol{\phi}}(\mathbf x)-\delta\,\frac{\mathbf G+\mathbf G^\top}{2}
\right)\mathbf v.
\]
This contradicts \eqref{eq:local_variational_condition}, since the latter implies
\[
\mathbf v^\top
\left(
D_{\boldsymbol{\phi}}(\mathbf x)-\delta\,\frac{\mathbf G+\mathbf G^\top}{2}
\right)\mathbf v
>0
\qquad \forall \mathbf v\neq \mathbf 0.
\]
Hence $\ker(\mathbf J(\mathbf x))=\{\mathbf 0\}$, and since $\mathbf J(\mathbf x)$ is square, it is invertible.
\end{proof}

\begin{corollary}\label{cor:comparison_local_jacobian_variational}
If $\mathbf D-\delta\,\frac{\mathbf G+\mathbf G^\top}{2}\succ \mathbf{O}_N$ then $\boldsymbol{J}(\mathbf{x})=D_{\boldsymbol{\phi}}(\mathbf x)-\delta \mathbf G$ is invertible for every $\mathbf{x}$.
\end{corollary}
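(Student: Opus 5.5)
The plan is to reduce the corollary to Proposition~\ref{prop:invertibility_jacobian_phi}. That proposition gives invertibility of $\mathbf J(\mathbf x)$ as soon as the local condition \eqref{eq:local_variational_condition} holds at $\mathbf x$. It therefore suffices to show that the global variational condition $\mathbf D-\delta\,\tfrac{\mathbf G+\mathbf G^\top}{2}\succ \mathbf{O}_N$ implies the local one at every $\mathbf x$ where $D_{\boldsymbol{\phi}}(\mathbf x)$ is defined.

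\textbf{Step 1: diagonal comparison.} I will show that $D_{\boldsymbol{\phi}}(\mathbf x)\succeq \mathbf D$ entrywise on the diagonal, that is, $\phi_i'(x_i)\ge \mu_i$ for all $i$. Assumption~\ref{ass:strong-convexity} gives $h_i(y)\ge h_i(x)+h_i'(x)(y-x)+\tfrac{\mu_i}{2}|y-x|^2$. Writing this inequality with the roles of $x$ and $y$ swapped and adding the two yields strong monotonicity of $\phi_i=h_i'$:
\[
(\phi_i(y)-\phi_i(x))(y-x)\ge \mu_i|y-x|^2 .
\]
Dividing by $(y-x)^2$ and letting $y\to x$ gives $\phi_i'(x)\ge\mu_i$ wherever the derivative exists. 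The corollary tacitly assumes $\phi_i$ is differentiable, since $D_{\boldsymbol{\phi}}$ would not otherwise be defined.

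\textbf{Step 2: quadratic-form comparison.} For any $\mathbf v\ne\mathbf 0$,
\[
\mathbf v^\top\Bigl(D_{\boldsymbol{\phi}}(\mathbf x)-\delta\tfrac{\mathbf G+\mathbf G^\top}{2}\Bigr)\mathbf v
=\sum_i(\phi_i'(x_i)-\mu_i)v_i^2+\mathbf v^\top\Bigl(\mathbf D-\delta\tfrac{\mathbf G+\mathbf G^\top}{2}\Bigr)\mathbf v .
\]
The first sum is nonnegative by Step 1, and the second term is strictly positive by hypothesis. Hence \eqref{eq:local_variational_condition} holds at $\mathbf x$. Proposition~\ref{prop:invertibility_jacobian_phi} then gives invertibility of $\mathbf J(\mathbf x)$, and since $\mathbf x$ was arbitrary this holds for every $\mathbf x$.

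The only step with any content is Step 1, namely passing from strong convexity of $h_i$ to the pointwise bound $\phi_i'\ge\mu_i$. The obstacle there is the differentiability caveat, which I plan to handle by the limit argument above at every point where $D_{\boldsymbol{\phi}}(\mathbf x)$ makes sense.
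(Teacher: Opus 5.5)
Your proposal is correct and follows essentially the same route as the paper: show $D_{\boldsymbol{\phi}}(\mathbf x)\succeq \mathbf D$ from strong convexity, conclude that the global variational condition implies the local condition \eqref{eq:local_variational_condition} at every $\mathbf x$, and invoke Proposition~\ref{prop:invertibility_jacobian_phi}. Your Step 1 obtains $\phi_i'\ge\mu_i$ from strong monotonicity of $h_i'$ by a limit argument, which is slightly more careful than the paper's direct appeal to $h_i''\ge\mu_i$ under an assumption of twice differentiability.
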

\begin{proof}
Proposition~\ref{prop:invertibility_jacobian_phi} gives a \emph{pointwise} sufficient condition for invertibility of the Jacobian
\[
D_{\boldsymbol{\phi}}(\mathbf x)-\delta \mathbf G
\]
at a fixed point $\mathbf x$. This condition is the local analogue of the variational condition in Theorem~\ref{thm:uniqueness_NE}. Indeed, if each $h_i$ is $\mu_i$-strongly convex and twice differentiable, then
\[
\phi_i'(x_i)=h_i''(x_i)\ge \mu_i,
\qquad \forall x_i\in\mathcal X_i,
\]
so that
\[
D_{\boldsymbol{\phi}}(\mathbf x)\succeq \mathbf D,
\qquad
\mathbf D:=\operatorname{diag}(\mu_1,\dots,\mu_N).
\]
Therefore, if the variational condition
\[
\mathbf D-\delta\,\frac{\mathbf G+\mathbf G^\top}{2}\succ \mathbf{O}_N
\]
holds, then for every $\mathbf x\in\mathcal X$,
\[
D_{\boldsymbol{\phi}}(\mathbf x)-\delta\,\frac{\mathbf G+\mathbf G^\top}{2}\succ \mathbf{O}_N.
\]
By Proposition~\ref{prop:invertibility_jacobian_phi}, this implies that
\[
D_{\boldsymbol{\phi}}(\mathbf x)-\delta \mathbf G
\]
is invertible for every $\mathbf x\in\mathcal X$.

Hence, the variational condition is a \emph{uniform} sufficient condition for invertibility of the Jacobian over the whole domain, whereas \eqref{eq:local_variational_condition} is only a \emph{local} sufficient condition at a given point $\mathbf x$. In particular, the pointwise condition in Proposition~\ref{prop:invertibility_jacobian_phi} is generally weaker, because it depends on the realized local slopes $\phi_i'(x_i)$, which may be strictly larger than the global lower bounds $\mu_i$.
\end{proof}



\subsubsection{On the invertibility and non-negativity of $\mathbf J(\mathbf x)$ via the contraction condition}\label{sec:invertibility_of_J-contraction}

\begin{proposition}\label{prop:comparison_local_jacobian_contraction}
If $\mathbf{G}=|\mathbf{G}|$ and the contraction condition $\rho(\delta \mathbf{G})<1$ holds, then the inverse of $\mathbf J(\mathbf x)
=
D_{\boldsymbol{\phi}}(\mathbf x)-\delta \mathbf G$ exists for every $\mathbf{x}$ and has non-negative entries. Moreover,
$$
\mathbf J(\mathbf x)^{-1}
=
\sum_{k=0}^\infty
\delta^k D_{\boldsymbol{\phi}}(\mathbf x)^{-k}\mathbf G^k
D_{\boldsymbol{\phi}}(\mathbf x)^{-1}.
$$
\end{proposition}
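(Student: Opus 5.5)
The plan is to factor the Jacobian as $\mathbf J(\mathbf x)=D_{\boldsymbol\phi}(\mathbf x)\bigl(\mathbf I_N-\delta D_{\boldsymbol\phi}(\mathbf x)^{-1}\mathbf G\bigr)$ and then invert the second factor by a Neumann series. For this the spectral radius of $\mathbf K(\mathbf x):=\delta D_{\boldsymbol\phi}(\mathbf x)^{-1}\mathbf G$ must be strictly below one. I read the hypothesis ``contraction condition'' as the one in Theorem~\ref{thm:uniqueness_NE}, namely $\rho(\delta\mathbf D^{-1}\mathbf G)<1$ with $|\mathbf G|=\mathbf G$. This coincides with $\rho(\delta\mathbf G)<1$ in the normalized case $\mathbf D=\mathbf I_N$, and also follows from it when all $\mu_i\ge 1$, by the monotonicity argument below. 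As in Corollary~\ref{cor:comparison_local_jacobian_variational}, I assume each $h_i$ is twice differentiable, so that $\phi_i'(x_i)=h_i''(x_i)\ge\mu_i>0$. Hence $D_{\boldsymbol\phi}(\mathbf x)$ is diagonal, positive and invertible, and $\mathbf 0\le D_{\boldsymbol\phi}(\mathbf x)^{-1}\le\mathbf D^{-1}$ entrywise.

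The first step is a spectral comparison. Since $\mathbf G\ge 0$, we have $\mathbf 0\le\mathbf K(\mathbf x)\le\delta\mathbf D^{-1}\mathbf G$ entrywise. The spectral radius is monotone on nonnegative matrices: from $\mathbf 0\le\mathbf A\le\mathbf B$ we get $\mathbf A^k\le\mathbf B^k$, hence $\|\mathbf A^k\|_\infty\le\|\mathbf B^k\|_\infty$, and Gelfand's formula $\rho(\mathbf A)=\lim_k\|\mathbf A^k\|^{1/k}$ gives $\rho(\mathbf A)\le\rho(\mathbf B)$. Therefore $\rho(\mathbf K(\mathbf x))\le\rho(\delta\mathbf D^{-1}\mathbf G)<1$ for every $\mathbf x$. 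This is the step I regard as the crux, since it transfers the global condition to each point $\mathbf x$. I would cite Perron--Frobenius monotonicity, or include the short Gelfand argument just given.

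The second step is the expansion. Because $\rho(\mathbf K(\mathbf x))<1$, the Neumann series $\sum_{k\ge0}\mathbf K(\mathbf x)^k$ converges, for instance in the weighted sup norm of Theorem~\ref{thm:general_competitive_NE_uniqueness_monopolist} built from $\mathbf K(\mathbf x)$. Its sum is $(\mathbf I_N-\mathbf K(\mathbf x))^{-1}$. Consequently
\[
\mathbf J(\mathbf x)^{-1}=\bigl(\mathbf I_N-\mathbf K(\mathbf x)\bigr)^{-1}D_{\boldsymbol\phi}(\mathbf x)^{-1}=\sum_{k=0}^\infty\bigl(\delta D_{\boldsymbol\phi}(\mathbf x)^{-1}\mathbf G\bigr)^kD_{\boldsymbol\phi}(\mathbf x)^{-1}.
\]
Here the $k$-th term is the product $\delta^k\,(D_{\boldsymbol\phi}^{-1}\mathbf G)\cdots(D_{\boldsymbol\phi}^{-1}\mathbf G)\,D_{\boldsymbol\phi}^{-1}$. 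The displayed formula in the statement should be read this way. It equals the literal form $\delta^kD_{\boldsymbol\phi}^{-k}\mathbf G^kD_{\boldsymbol\phi}^{-1}$ only when $D_{\boldsymbol\phi}(\mathbf x)$ commutes with $\mathbf G$, e.g.\ when it is a scalar multiple of the identity. I would flag this explicitly in the write-up.

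Finally, nonnegativity follows termwise. Each factor $D_{\boldsymbol\phi}(\mathbf x)^{-1}$ and $\mathbf G$ is entrywise nonnegative, so every summand is nonnegative. A convergent series of nonnegative matrices has a nonnegative limit, so $\mathbf J(\mathbf x)^{-1}\ge 0$ for every $\mathbf x$. Invertibility holds everywhere because the factorization exhibits $\mathbf J(\mathbf x)$ as a product of two invertible matrices.
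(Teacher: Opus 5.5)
Your proposal is correct and follows the paper's proof essentially step for step. The shared steps are the entrywise sandwich $\mathbf O_N\le\delta D_{\boldsymbol{\phi}}(\mathbf x)^{-1}\mathbf G\le\delta\mathbf D^{-1}\mathbf G$, monotonicity of the spectral radius via Gelfand's formula, the factorization $\mathbf J(\mathbf x)=D_{\boldsymbol{\phi}}(\mathbf x)\bigl(\mathbf I_N-\delta D_{\boldsymbol{\phi}}(\mathbf x)^{-1}\mathbf G\bigr)$ inverted by a Neumann series, and termwise nonnegativity.

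Both of your caveats are also right. The paper's argument actually uses $\rho(\delta\mathbf D^{-1}\mathbf G)<1$ rather than the stated $\rho(\delta\mathbf G)<1$, and the latter alone need not suffice when the $\mu_i$ are small. The series is properly $\sum_{k}\bigl(\delta D_{\boldsymbol{\phi}}(\mathbf x)^{-1}\mathbf G\bigr)^kD_{\boldsymbol{\phi}}(\mathbf x)^{-1}$, which matches the displayed form only when $D_{\boldsymbol{\phi}}(\mathbf x)$ commutes with $\mathbf G$.
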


\begin{proof}
Since $\phi_i'(x_i)\ge \mu_i$ for all $i \in \mathcal{N}$, we have
\[
D_{\boldsymbol{\phi}}(\mathbf x)\succeq \mathbf D=\operatorname{diag}(\mu_1,\mu_2,\dots, \mu_N)
\qquad\text{diagonally},
\]
and therefore
\[
D_{\boldsymbol{\phi}}(\mathbf x)^{-1}\preceq \mathbf D^{-1}
\qquad\text{diagonally}.
\]
Because $\mathbf G\ge 0$ entrywise, it follows that
\begin{equation}\label{ineq:entri-wise-nonneg}
\mathbf O_N\le \delta D_{\boldsymbol{\phi}}(\mathbf x)^{-1}\mathbf G
\le \delta \mathbf D^{-1}\mathbf G
\qquad\text{entrywise}.
\end{equation}

\begin{lemma}[Monotonicity of the spectral radius for nonnegative matrices]\label{lemma:Monotonicity-spectral-radius}
Let $\mathbf A,\mathbf B\in\mathbb R^{N \times N}$ be entrywise nonnegative matrices. If $\mathbf O_N\le \mathbf A\le \mathbf B$
entrywise, then $\rho(\mathbf A)\le \rho(\mathbf B)$, where $\rho(\cdot)$ denotes the spectral radius.
\end{lemma}

\begin{proof}
Since $\mathbf O_N\le \mathbf A\le \mathbf B$ entry-wise and both matrices are nonnegative, we first show that
\[
\mathbf O_N \le \mathbf A^k\le \mathbf B^k
\qquad\text{entrywise for every } k\ge 1.
\]
The case $k=1$ is exactly the assumption. Suppose now that $0\le \mathbf A^k\le \mathbf B^k$. Since $\mathbf A\le \mathbf B$ and all matrices involved are entrywise nonnegative, multiplication preserves entrywise inequalities. Hence
\[
\mathbf A^{k+1}
=
\mathbf A^k \mathbf A
\le
\mathbf B^k \mathbf B
=
\mathbf B^{k+1}.
\]
Therefore, by induction, $\mathbf{O}_N\le \mathbf A^k\le \mathbf B^k$ for all $k\ge 1$. Now consider the matrix norm $\|\mathbf M\|_{\infty} := \max_{1\le i\le n}\sum_{j=1}^n |M_{ij}|$. Since $\mathbf O_N\le \mathbf A^k\le \mathbf B^k$ entrywise, we have $\|\mathbf A^k\|_{\infty}
\le
\|\mathbf B^k\|_{\infty}$ for every $k\ge 1$. Taking $k$-th roots gives
\[
\|\mathbf A^k\|_{\infty}^{1/k}
\le
\|\mathbf B^k\|_{\infty}^{1/k}.
\]
By Gelfand's formula,
\[
\rho(\mathbf A)
=
\lim_{k\to\infty}\|\mathbf A^k\|_{\infty}^{1/k},
\qquad
\rho(\mathbf B)
=
\lim_{k\to\infty}\|\mathbf B^k\|_{\infty}^{1/k}.
\]
Passing to the limit in the previous inequality yields $\rho(\mathbf A)\le \rho(\mathbf B)$.
\end{proof}

By \eqref{ineq:entri-wise-nonneg} and Lemma~\ref{lemma:Monotonicity-spectral-radius}, we have
\[
\rho \left(\delta D_{\boldsymbol{\phi}}(\mathbf x)^{-1}\mathbf G\right)
\le
\rho \left(\delta \mathbf D^{-1}\mathbf G\right)
<1.
\]
Therefore, $\left(\mathbf I_N-\delta D_{\boldsymbol{\phi}}(\mathbf x)^{-1}\mathbf G\right)^{-1}$ exists and can be written as the Neumann series:
\[
\left(\mathbf I_N-\delta D_{\boldsymbol{\phi}}(\mathbf x)^{-1}\mathbf G\right)^{-1}
=
\sum_{k=0}^\infty
\left(\delta D_{\boldsymbol{\phi}}(\mathbf x)^{-1}\mathbf G\right)^k,
\]
and every term in the series is entrywise nonnegative. Since
\[
\mathbf J(\mathbf x)
=
D_{\boldsymbol{\phi}}(\mathbf x)-\delta \mathbf G
=
D_{\boldsymbol{\phi}}(\mathbf x)(\mathbf I_N-\delta D_{\boldsymbol{\phi}}(\mathbf x)^{-1}\mathbf G),
\]
we obtain that $\mathbf J(\mathbf x)^{-1}$ exists and
\[
\mathbf J(\mathbf x)^{-1}
=
\Bigl(\mathbf I_N-\delta D_{\boldsymbol{\phi}}(\mathbf x)^{-1}\mathbf G\Bigr)^{-1}
D_{\boldsymbol{\phi}}(\mathbf x)^{-1}\ge \mathbf O_N 
\]
entrywise. Moreover 
\[
\mathbf J(\mathbf x)^{-1}
=
\sum_{k=0}^\infty
\delta^k D_{\boldsymbol{\phi}}(\mathbf x)^{-k}\mathbf G^k
D_{\boldsymbol{\phi}}(\mathbf x)^{-1}
\]
\end{proof}

\newpage

\subsection{Proof of Proposition~\ref{prop:v_i-equality}}\label{app:proof-prop-v_i-equality}

\begin{proof}[Proof of Proposition~\ref{prop:v_i-equality}]
By Assumption~\ref{ass:strong-convexity}, $\phi_i'(x_i)\ge \mu_i$ for all $i$, hence
\[
D_{\boldsymbol\phi}(\mathbf x)\succeq \mathbf D
\qquad\text{and}\qquad
D_{\boldsymbol\phi}(\mathbf x)^{-1}\preceq \mathbf D^{-1}.
\]
Therefore
\[
\mathbf O_N\le \delta D_{\boldsymbol\phi}(\mathbf x)^{-1}\mathbf G
\le \delta \mathbf D^{-1}\mathbf G
\qquad\text{entrywise}.
\]
Since $\rho(\delta \mathbf D^{-1}\mathbf G)<1$, the Neumann series is valid and monotone for nonnegative matrices, so
\[
\bigl(\mathbf I-\delta D_{\boldsymbol\phi}(\mathbf x)^{-1}\mathbf G\bigr)^{-1}
\preceq
\bigl(\mathbf I-\delta \mathbf D^{-1}\mathbf G\bigr)^{-1}.
\]
Using
\[
\mathbf J(\mathbf x)^{-1}
=
\bigl(\mathbf I-\delta D_{\boldsymbol\phi}(\mathbf x)^{-1}\mathbf G\bigr)^{-1}
D_{\boldsymbol\phi}(\mathbf x)^{-1},
\]
we obtain
\[
\mathbf J(\mathbf x)^{-1}
\preceq
\bigl(\mathbf I-\delta \mathbf D^{-1}\mathbf G\bigr)^{-1}\mathbf D^{-1}
=
(\mathbf D-\delta \mathbf G)^{-1}.
\]
Multiplying on the left by $\mathbf 1^\top$ and taking the $i$-th coordinate gives
\[
\bigl[\mathbf 1^\top \mathbf J(\mathbf x)^{-1}\bigr]_i
\le
\bigl[\mathbf 1^\top(\mathbf D-\delta \mathbf G)^{-1}\bigr]_i,
\qquad \forall \mathbf x\in\mathcal X.
\]
Taking the supremum over $\mathbf x\in\mathcal X$ proves
\[
V_i\le \bigl[\mathbf 1^\top(\mathbf D-\delta \mathbf G)^{-1}\bigr]_i.
\]

If there exists $\mathbf x^\dagger\in\mathcal X$ such that $D_{\boldsymbol\phi}(\mathbf x^\dagger)=\mathbf D$, then
\[
\mathbf J(\mathbf x^\dagger)=\mathbf D-\delta \mathbf G,
\]
hence
\[
\bigl[\mathbf 1^\top \mathbf J(\mathbf x^\dagger)^{-1}\bigr]_i
=
\bigl[\mathbf 1^\top(\mathbf D-\delta \mathbf G)^{-1}\bigr]_i.
\]
Combining this with the upper bound yields equality.
\end{proof}
\newpage

\subsection{Proof of Proposition \ref{prop:general_pure_influencer}}\label{app:proof-prop-general-pure-infl}

\begin{proof}[Proof of Proposition \ref{prop:general_pure_influencer}]
Since $\mathbf x^\star(\mathbf p)\in\mathring{\mathcal X}$, it satisfies
\[
\boldsymbol{\phi}(\mathbf x^\star(\mathbf p))-\delta \mathbf G\mathbf x^\star(\mathbf p)
=
\mathbf a-\mathbf B\mathbf p.
\]
\emph{Step 1.} Since the $i$-th row of $\mathbf G$ is zero, the $i$-th equilibrium equation becomes
\[
\phi_i(x_i^\star(\mathbf p))=a_i-b_i p_i.
\]
Because $\phi_i$ is strictly increasing, it is invertible, and therefore
\[
x_i^\star(\mathbf p)=\phi_i^{-1}(a_i-b_i p_i).
\]

\medskip
\noindent
\emph{Step 2.}
Differentiating
\[
\boldsymbol{\phi}(\mathbf x^\star(\mathbf p))-\delta \mathbf G\mathbf x^\star(\mathbf p)
=
\mathbf a-\mathbf B\mathbf p.
\]
with respect to $\mathbf p$, we obtain
\[
\bigl(D_{\boldsymbol{\phi}}(\mathbf x^\star(\mathbf p))-\delta \mathbf G\bigr)\,
D_{\mathbf p}\mathbf x^\star(\mathbf p)
=
-\mathbf B,
\]
where $\mathbf{J}(\mathbf{x})^{-1}$ exists for every $\mathbf{x} \in \mathring{\mathcal{X}}$ (see Appendix~\ref{sec:invertibility_of_J}). Hence
\begin{equation}\label{eq:implicit_dxdp_general}
D_{\mathbf p}\mathbf x^\star(\mathbf p)
=
-\mathbf J(\mathbf x^\star(\mathbf p))^{-1}\mathbf B.
\end{equation}
By the product rule and \eqref{eq:implicit_dxdp_general},
\[
\nabla r(\mathbf p)
=
\mathbf x^\star(\mathbf p)
+
\bigl(D_{\mathbf p}\mathbf x^\star(\mathbf p)\bigr)^\top \mathbf p
=
\mathbf x^\star(\mathbf p)
-
\mathbf B\,\mathbf J(\mathbf x^\star(\mathbf p))^{-\top}\mathbf p.
\]
Thus, the $i$-th first-order condition at the interior maximizer $\mathbf p^\star$ is
\[
0
=
x_i^\star(\mathbf p^\star)
-
b_i\bigl(\mathbf J(\mathbf x^\star(\mathbf p^\star))^{-\top}\mathbf p^\star\bigr)_i.
\]
Equivalently,
\[
x_i^\star(\mathbf p^\star)
=
b_i\sum_{j=1}^N
\bigl[\mathbf J(\mathbf x^\star(\mathbf p^\star))^{-1}\bigr]_{ji}p_j^\star.
\]

Using Step 1, we have $x_i^\star(\mathbf p^\star)=\phi_i^{-1}(a_i-b_i p_i^\star)$. Hence, we obtain
\[
\phi_i^{-1}(a_i-b_i p_i^\star)=b_i\sum_{j=1}^N
\bigl[\mathbf J(\mathbf x^\star(\mathbf p^\star))^{-1}\bigr]_{ji}p_j^\star.
\]
Applying $\phi_i$ to both sides gives
\[
a_i-b_i p_i^\star=\phi_i\left(b_i\sum_{j=1}^N
\bigl[\mathbf J(\mathbf x^\star(\mathbf p^\star))^{-1}\bigr]_{ji}p_j^\star\right),
\]
hence
\[
p_i^\star
=
\frac{a_i-\phi_i\left(b_i\sum_{j=1}^N
\bigl[\mathbf J(\mathbf x^\star(\mathbf p^\star))^{-1}\bigr]_{ji}p_j^\star\right)}{b_i}.
\]
which has to be strictly positive because we are assuming that $\mathbf{p}^{\star} \in \mathring{\mathcal{P}}$.

\medskip
\noindent
\emph{Step 3.} Wlof let assume that $i=1$. Let $\widetilde{\mathcal{P}}=\{\mathbf{p} \in \mathcal{P}:p_1=0\}$, and let $\widetilde{\mathbf p}$ be the value in $\widetilde{\mathcal{P}}$ that maximizes $r$. Since $\mathbf p^\star \in \mathcal{P}$ is an interior local maximizer and is twice differentiable around $\mathbf{p}^{\star}$, the Taylor expansion around $\mathbf p^\star$ gives
\[
r(\mathbf p)
=
r(\mathbf p^\star)
+\frac12(\mathbf p-\mathbf p^\star)^\top \mathbf Q(\mathbf p-\mathbf p^\star)
+
o(\|\mathbf p-\mathbf p^\star\|_2^2),
\qquad \mathbf Q:=\nabla^2 r(\mathbf p^\star)\prec 0,
\]
so for $\mathbf{p}=\widetilde{\mathbf p}$ we have
\[
r(\mathbf p^\star)-r(\widetilde{\mathbf p})
=
-\frac12(\widetilde{\mathbf p}-\mathbf p^\star)^\top \mathbf Q(\widetilde{\mathbf p}-\mathbf p^\star)
+
o(\|\widetilde{\mathbf p}-\mathbf p^\star\|_2^2),
\]
Now $\widetilde{\mathbf p}$ is constrained by
$$
\mathbf e_1^{\top} \widetilde{\mathbf p}=0 \quad \Longrightarrow \quad \mathbf e_1^{\top}\left(\widetilde{\mathbf p}-\mathbf p^{\star}\right)=-p_1^*,
$$

where $\mathbf e_1=(1,0, \ldots, 0)^{\top}$.
To second order, $\widetilde{\mathbf p}$ is obtained by maximizing the quadratic approximation of $r$, i.e. by solving

$$
\max _{\Delta \in \mathcal{P}} \frac{1}{2} \Delta^{\top} \mathbf Q \Delta \quad \text { s.t. } \quad \mathbf e_1^{\top} \Delta=-p_1^*,
$$

with $\Delta=\widetilde{\mathbf p}-\mathbf{p}^{\star}$.
which concludes the proof. Since $\mathbf Q \prec 0$, the solution is $\Delta=-\frac{p_1^*}{\mathbf e_1^{\top} \mathbf Q^{-1} \mathbf e_1} \mathbf Q^{-1} \mathbf e_1 \quad$ to leading order. Plugging this into the quadratic term,
$$
\Delta^{\top} \mathbf Q \Delta=\frac{\left(p_1^*\right)^2}{\mathbf e_1^{\top} \mathbf Q^{-1} \mathbf e_1} .
$$
\[
r(\mathbf p^\star)-r(\widetilde{\mathbf p})
=
\underset{=:\theta_i>0}{\underbrace{-\frac12 \frac{1}{\mathbf e_1^{\top} \mathbf Q^{-1} \mathbf e_1}}}\left(p_1^*\right)^2
+
o((p_1^{\star})^2),.
\]
This concludes the proof.
\end{proof}

\newpage

\subsection{Proof of Corollary \ref{cor:structural_influencer_zero_price}}\label{proof-cor-structural-influencer}

\begin{proof}[Proof of Corollary \ref{cor:structural_influencer_zero_price}]
First, note that by Proposition \ref{prop:comparison_local_jacobian_contraction}, $\mathbf{J}(\mathbf{x})^{-1}$ exists for every $\mathbf{x}$ and has non-negative entries. Now, from Proposition~\ref{prop:general_pure_influencer},
\[
p_i^\star
=
\frac{a_i-\phi_i(\alpha_i^\star p_i^\star+\beta_i^\star)}{b_i},
\]
where
\[
\alpha_i^\star p_i^\star+\beta_i^\star
=
b_i\sum_{j=1}^N
\bigl[\mathbf J(\mathbf x^\star(\mathbf p^\star))^{-1}\bigr]_{ji}p_j^\star.
\]
Since $p_j^\star\le \overline p$ for all $j$,
\[
\alpha_i^\star p_i^\star+\beta_i^\star
\le
\overline p\, b_i
\sum_{j=1}^N
\bigl[\mathbf J(\mathbf x^\star(\mathbf p^\star))^{-1}\bigr]_{ji}
\le
\overline p\, b_i\, V_i.
\]
Hence, if
\[
\overline p\, b_i\, V_i<\phi_i^{-1}(a_i),
\]
then, since $\phi_i$ is increasing
\[
\phi(\alpha_i^\star p_i^\star+\beta_i^\star)\leq \phi(\bar{p}b_iV_i)<a_i,
\]
Therefore
\[
p_i^{\star}=\frac{a_i-\phi_i(\alpha_i^\star p_i^\star+\beta_i^\star)}{b_i}\geq \frac{a_i-\phi_i(\bar{p}b_iV_i)}{b_i}>0.
\]
This completes the proof.
\end{proof}

\newpage

\subsection{Strong-monotonicity of $\mathbf{F}$}\label{app:proof-monotonicity-F}

We prove that if Assumption~\ref{ass:strong-convexity} and Assumption~\ref{ass:strong-concavity-estim} hold, then the map $\mathbf F(\mathbf x):=\boldsymbol{\psi}(\mathbf x)-\delta \mathbf G\mathbf x$ is $m$-strongly monotone.
\begin{proof}
For any $\mathbf x,\mathbf y\in\mathcal X$, we have 
\[
\mathbf F(\mathbf x)-\mathbf F(\mathbf y)
=
\boldsymbol{\phi}(\mathbf x)-\boldsymbol{\phi}(\mathbf y)-\delta \mathbf G(\mathbf x-\mathbf y).
\]
Therefore,
\begin{align*}
\textstyle \langle \mathbf F(\mathbf x)-\mathbf F(\mathbf y),\mathbf x-\mathbf y\rangle
=
\sum_{i=1}^N
\bigl(\phi_i(x_i)-\phi_i(y_i)\bigr)(x_i-y_i)
-\delta (\mathbf x-\mathbf y)^\top \mathbf G(\mathbf x-\mathbf y).
\end{align*}
Since
\[
\textstyle \mathbf z^\top \mathbf G \mathbf z
=
\mathbf z^\top \frac{\mathbf G+\mathbf G^\top}{2}\mathbf z,
\qquad \forall \mathbf z\in\mathbb R^N,
\]
and by Assumption~\ref{ass:strong-convexity},
\[
\bigl(\phi_i(x_i)-\phi_i(y_i)\bigr)(x_i-y_i)\ge \mu_i|x_i-y_i|^2,
\]
we obtain
\begin{align*}
\textstyle \langle \mathbf F(\mathbf x;\mathbf p)-\mathbf F(\mathbf y;\mathbf p),\mathbf x-\mathbf y\rangle
\ge
(\mathbf x-\mathbf y)^\top
\left(
\mathbf D-\delta\,\frac{\mathbf G+\mathbf G^\top}{2}
\right)
(\mathbf x-\mathbf y),
\end{align*}
and by Assumption~\ref{ass:strong-concavity-estim}
\begin{align*}
\textstyle \langle \mathbf F(\mathbf x;\mathbf p)-\mathbf F(\mathbf y;\mathbf p),\mathbf x-\mathbf y\rangle
\ge
(\mathbf x-\mathbf y)^\top
m \mathbf{I}_N
(\mathbf x-\mathbf y) = m \|\mathbf x-\mathbf y\|_2.
\end{align*}
This completes the proof.
\end{proof}

\newpage

\subsection{Proof of Theorem~\ref{thm:dynamic_regret_isotonic_network}}\label{app:thm:dynamic_regret}

\begin{proof}[Proof of Theorem~\ref{thm:dynamic_regret_isotonic_network}]
Let $E = \{1,2,\dots, n\}$ be the exploration phase and $E' = \{n+1,n+2,\dots, T\}$ the exploitation phase. Let $n:=T_0=\lceil T^\beta\rceil$, where $\beta\in(0,1)$ will be chosen later.

\textbf{Step 1: Exploration regret.} Define 
\[
R_{\max}:=\sup_{\mathbf p\in\mathcal P,\mathbf x\in\mathcal X} |\mathbf p^\top \mathbf x|<\infty,
\]
which exists because $\mathcal{X}\times \mathcal{P}$ is bounded. Since $|\mathbf p^\top \mathbf x|\le R_{\max}$ for all $(\mathbf p,\mathbf x)\in\mathcal P\times\mathcal X$, the exploration contribution is bounded by
\[
\sum_{t\in E}
\bigl(
r(\mathbf p^\star)-r(\mathbf p^t)
\bigr)
\le
2R_{\max}\,n.
\]

\medskip
\noindent
\textbf{Step 2: Stability of the equilibrium map.}

Fix $t \in E'$. Recall that
$$
\mathbf p^t\in\arg\max_{\mathbf p\in\mathcal P} \, \{\widehat{r}(\mathbf p)=\mathbf p^\top \widehat{\mathbf x}^\star(\mathbf p)\}
,\qquad \mathbf p^{\star}\in\arg\max_{\mathbf p\in\mathcal P} \, \{r(\mathbf p)=\mathbf p^\top \mathbf x^\star(\mathbf p)\}.
$$

$$
\begin{aligned}
r(\mathbf p^{\star})-r(\mathbf p^t) &=\left\{r(\mathbf p^{\star})-\widehat{r}(\mathbf p^{\star}) \right\}+\underset{\leq 0}{\underbrace{\left\{\widehat{r}(\mathbf p^{\star})-\widehat{r}(\mathbf p^t)\right\}}}+ \left\{r(\mathbf p^t)-\widehat{r}(\mathbf p^t)\right\} \nonumber\\
&\leq 2\sup_{\mathbf p \in \mathcal{P}} |\widehat{r}(\mathbf p)-r(\mathbf p) | = 2\sup_{\mathbf p \in \mathcal{P}} |\mathbf{p}^{\top} (\widehat{\mathbf{x}}(\mathbf{p})-\mathbf{x}^{\star}(\mathbf{p}))|\nonumber\\
&\leq 2 \sup_{\mathbf p \in \mathcal{P}} \|\mathbf{p}\|_2\|\widehat{\mathbf{x}}(\mathbf{p})-\mathbf{x}^{\star}(\mathbf{p}))\|_2 \nonumber\\
&\leq 2P_{\max}\sup_{\mathbf p \in \mathcal{P}} \|\widehat{\mathbf{x}}(\mathbf{p})-\mathbf{x}^{\star}(\mathbf{p}))\|_2,
\end{aligned}
$$
where \[
P_{\max}:=\sup_{\mathbf p\in\mathcal P}\|\mathbf p\|_2<\infty.
\]

Now, by assumptions, for all $i\in \mathcal{N}$, $\psi_i$ is $\alpha_i$-H\"older continuous in $\mathcal{X}_i$ for some $\alpha_i \in (0,1]$. Hence, since $\mathcal{X_i}$ are compact, each $\psi_i$ is $\alpha = \min_i \alpha_i$ H\"older continuous in $\mathcal{X}_i$.

\begin{lemma}\label{lem:J2J3}
Let assumptions of Theorem~\ref{thm:dynamic_regret_isotonic_network} hold. Then for every $\kappa >0$ and $\gamma>2$ there exists $n_0=n_0(\gamma,\kappa,\alpha) \in \mathbb{N}$ and $C_i= C_i(L_i,|\mathcal{X}|,\kappa,\alpha)>0$ such that 
$$
\mathbb{P}\left\{\sup_{x_i \in \mathcal{X}_{i,n}}|\widehat{\psi}_i(x_i)-\psi_i(x_i))|\lesssim C_i \rho_n^{\nicefrac{\alpha}{2\alpha+1}}\right\}\geq 1-\tfrac{1}{n^{\gamma -2}},\quad n \geq n_0,
$$
where $\rho_n=\ln (n) / n$ and $\mathcal{X}_{i,n} = \{x_i \in \mathcal{X}_i:\left[x_i \pm \delta_n\right] \subset \mathcal{X}_i\}$, with $\delta_n=\kappa  \rho_n^{1 /(2\alpha +1)}$. Specifically, $n_0$ is the smallest integer $n$ that satisfies $\epsilon_n<1$, where
$$
\epsilon_n\triangleq\max \left(c_n / \delta_n, \sqrt{2 c_n / \delta_n}\right)+\left(n \delta_n\right)^{-1},\qquad c_n\triangleq \gamma \ln (n+2) /(n+1).
$$
\end{lemma}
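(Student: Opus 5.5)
We plan to derive Lemma~\ref{lem:J2J3} from the uniform concentration inequality for isotonic regression proved in Appendix~H (Theorem~\ref{thm:Dumbgen}), a finite-sample version of D\"umbgen's bound. We apply it coordinate by coordinate. Fix $i\in\mathcal N$. The exploration sample $\{(x_i^t,y_i^t)\}_{t=1}^n$ follows the regression model \eqref{eq:coordinate_regression_isotonic}: $y_i^t=\psi_i(x_i^t)+\xi_i^t$, with $\psi_i$ non-decreasing and $\alpha$-H\"older on the compact $\mathcal X_i$, and $\xi_i^t$ sub-Gaussian with proxy $\sigma_i^2$. The design points $x_i^t$ are produced by uniformly random prices through the equilibrium map. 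We must therefore check the design-density condition that the concentration result needs: the empirical measure of the $x_i^t$ should put mass of order at least $\delta_n$ in every interval of length $\delta_n$ inside $\mathcal X_i$. We would get this from Assumption~\ref{ass:strong-concavity-estim}, since $\mathbf Q$ is Lipschitz, so its inverse pushes $\operatorname{Unif}(\mathcal P)$ forward to a law with density bounded below on $\mathcal X$. Marginalizing gives a density bounded below on each $\mathcal X_i$.

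The core argument is the standard local-averaging sandwich for the PAVA estimator. For an interior point $x$ and a bandwidth $\delta_n=\kappa\rho_n^{1/(2\alpha+1)}$, the min--max characterization of isotonic regression gives
\[
\min_{v\ge x}\ \bar y_{[x-\delta_n, v]} \;\ge\; \widehat\psi_i(x)\;\ge\;\dots,
\]
and in simplified form
\[
\bar y_{[x,x+\delta_n]}\ \ge\ \widehat\psi_i(x)\ \ge\ \bar y_{[x-\delta_n,x]}.
\]
Each local average splits into a bias term and a noise term:
\[
\bar y_{I}=\bar\psi_{I}+\bar\xi_{I}.
\]
The bias satisfies $|\bar\psi_I-\psi_i(x)|\le L_i\delta_n^\alpha$ by monotonicity and H\"older continuity. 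For the noise we use a union bound over all $O(n^2)$ intervals with data-point endpoints, together with sub-Gaussian tails. This gives, with probability at least $1-n^{-(\gamma-2)}$,
\[
|\bar\xi_I|\lesssim\sigma_i\sqrt{\gamma\ln n/\#I}\quad\text{simultaneously for all such } I.
\]
The exponent $\gamma-2$ comes from paying $n^2$ in the union bound. The design condition gives $\#I\gtrsim n\delta_n$, so the noise term is at most $\sqrt{\ln n/(n\delta_n)}$. Balancing $\delta_n^\alpha$ against $\sqrt{\rho_n/\delta_n}$ yields the choice $\delta_n\asymp\rho_n^{1/(2\alpha+1)}$ and the rate $\rho_n^{\alpha/(2\alpha+1)}$. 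The requirement $[x\pm\delta_n]\subset\mathcal X_i$ is exactly why the supremum is taken over $\mathcal X_{i,n}$. Piecewise-linear interpolation between fitted values changes nothing on this set, by monotonicity. Both the H\"older bias and the noise term are preserved between neighbouring design points, whose spacing is $o(\delta_n)$ with high probability.

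\textbf{Main obstacle.} The hardest step is controlling the count $\#I$ uniformly from below, that is, comparing empirical interval masses to $\delta_n$. This is where $\epsilon_n$ and $n_0$ enter. We would use a relative (Bernstein-type) deviation bound for the empirical measure over all intervals, at confidence level $c_n=\gamma\ln(n+2)/(n+1)$. It gives that every interval of length $\delta_n$ has empirical mass at least $\delta_n(1-\epsilon_n)$, with $\epsilon_n=\max(c_n/\delta_n,\sqrt{2c_n/\delta_n})+(n\delta_n)^{-1}$; the last term accounts for discretization at the endpoints. Once $n\ge n_0$ we have $\epsilon_n<1$, so the lower bound is nontrivial, and the constant $C_i$ absorbs $L_i$, $\sigma_i$, $\kappa$, $(1-\epsilon_{n_0})^{-1/2}$ and the design density. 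Everything else is bookkeeping, so we would mainly verify that Theorem~\ref{thm:Dumbgen} is stated with exactly these quantities and simply invoke it.
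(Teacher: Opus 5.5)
Your skeleton matches the paper's proof: apply Theorem~\ref{thm:Dumbgen} to each coordinate, so the only model-specific work is showing that the design $x_i^t$ has a density bounded below on $\mathcal X_i$. Your sandwich, bias, union-bound and empirical-measure sketch simply reproduces Appendix~H. The gap is in that one verification step.

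You push $\operatorname{Unif}(\mathcal P)$ forward through $\mathbf Q^{-1}$, i.e.\ you take $\mathbf x^t=\mathbf Q^{-1}(\mathbf p^t)$. This is not the model \eqref{eq:coordinate_regression_isotonic}: if $\mathbf F(\mathbf x^t)=-\mathbf B\mathbf p^t$ held exactly, then $y_i^t=\psi_i(x_i^t)$ with no noise at all. More importantly, the resulting law is supported only on $\mathbf Q^{-1}(\mathcal P)$.

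Since $\mathcal P$ is compact and $\mathcal P\subseteq\mathbf Q(\mathring{\mathcal X})$, the set $\mathbf Q^{-1}(\mathcal P)$ is a compact subset of $\mathring{\mathcal X}$. So in coordinate $i$ your design stays a fixed distance $d_i>0$ away from the endpoints of $\mathcal X_i$. Once $\delta_n<d_i$, the set $\mathcal X_{i,n}$ contains points with no data nearby. There the hypothesis $\inf_{\mathcal U}f_w\ge C_2$ fails for $\mathcal U=\mathcal X_i$, and $\widehat\psi_i$ is pure extrapolation. Hence the uniform bound over $\mathcal X_{i,n}$ cannot follow.

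There are two smaller errors in the same sentence:
\begin{itemize}
\item Lipschitz continuity of $\mathbf Q$ bounds $|\det D\mathbf Q|$, and hence the pushed-forward density $f_{\mathbf P}(\mathbf Q(\mathbf x))|\det D\mathbf Q(\mathbf x)|$, from \emph{above}. A lower bound needs the reverse inequality $\|\mathbf F(\mathbf x)-\mathbf F(\mathbf y)\|_2\ge m\|\mathbf x-\mathbf y\|_2$, which comes from strong monotonicity and gives $|\det\nabla\mathbf F|\ge m^N$.
\item Marginalizing transfers a lower bound only when the joint density is bounded below on a product set. On a curved set such as $\mathbf Q^{-1}(\mathcal P)$, the marginal can vanish at the ends of its support.
\end{itemize}

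The paper closes this step by keeping the noise in the design: $\mathbf x^t=\mathbf F^{-1}(\mathbf z^t)$ with $\mathbf z^t=-\mathbf B\mathbf p^t+\boldsymbol\xi^t$. The argument then runs as follows:
\begin{itemize}
\item Convolving the uniform price law with a noise density that is positive on $\mathbb R^N$ gives $f_{\mathbf Z}>0$ everywhere. The paper uses this positivity implicitly; it is not among the stated assumptions.
\item The change of variables gives $f_{\mathbf X}(\mathbf x)=f_{\mathbf Z}(\mathbf F(\mathbf x))|\det\nabla\mathbf F(\mathbf x)|\ge m^N f_{\mathbf Z}(\mathbf F(\mathbf x))>0$ on all of $\mathcal X$.
\item Compactness, together with continuity, yields a positive lower bound.
\item Because $\mathcal X=\prod_i\mathcal X_i$ is a product, each marginal inherits that bound.
\end{itemize}

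Be aware that this route has a cost the paper does not address. Once $\boldsymbol\xi^t$ enters $\mathbf x^t$, the response noise $\xi_i^t$ is no longer independent of the design $x_i^t$. Lemma~\ref{lemma:Hoeffding}, however, treats the noise as mean-zero sub-Gaussian conditionally on the design points.
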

The proof can be found in Section~\ref{proof:lemma-convergence}, and is an application of the more general Theorem~\ref{thm:Dumbgen}.

Now, define 
\[
\mathcal E_n:=
\left\{
\max_{i\in \mathcal{N}}
\sup_{x_i\in\mathcal X_{i,n}}
\bigl|
\widehat{\psi}_{i}(x_i)-\psi_i(x_i)
\bigr|
\le C_i \rho_n^{\nicefrac{\alpha}{2\alpha+1}}
\right\}=\left\{
\sup_{\mathbf{x}\in\mathcal X_{n}}
\|
\widehat{\boldsymbol{\psi}}(\mathbf{x})-\boldsymbol{\psi}(\mathbf{x})
\|_{\infty}
\lesssim \rho_n^{\nicefrac{\alpha}{2\alpha+1}}
\right\},
\]
where $\mathcal{X}_n = \prod_{i \in \mathcal{N}}\mathcal{X}_{i,n}$ and $\lesssim$ absorbs the maximum of the $C_i$ over $i\in \mathcal{N}$. Using Lemma~\ref{lem:J2J3} an union bound over $i \in \mathcal{N}$ we get that 
$$
\mathbb{P} (\mathcal{E}_n)\geq 1-\frac{N}{n^{\gamma-2}}, \qquad n \geq n_0.
$$

Now, on the event $\mathcal E_n$ fix any $\mathbf p\in\mathcal P$, and let
\[
\mathbf x^\star(\mathbf p)
\quad\text{and}\quad
\widehat{\mathbf x}^{\star}(\mathbf p)
\]
be the true and plug-in equilibria. Since $\mathbf F(\mathbf x^\star(\mathbf p))=\mathbf 0$ and
\[
\mathbf{F}(\widehat{\mathbf x}^{\star}(\mathbf p))=\widehat{\boldsymbol{\psi}}(\widehat{\mathbf x}^{\star}(\mathbf p))-\delta \mathbf G\widehat{\mathbf x}^{\star}(\mathbf p)
=
-\mathbf B\mathbf p,
\]
we have
\[
\mathbf F(\widehat{\mathbf x}^{\star}(\mathbf p))
= \boldsymbol{\psi}(\widehat{\mathbf x}^{\star}(\mathbf p))-\delta \mathbf G\widehat{\mathbf x}^{\star}(\mathbf p)+\mathbf B\mathbf p = 
\boldsymbol{\psi}(\widehat{\mathbf x}^{\star}(\mathbf p))
-
\widehat{\boldsymbol{\psi}}(\widehat{\mathbf x}^{\star}(\mathbf p)).
\]
The assumption 
    \[
\textstyle \mathbf D-\delta\,\frac{\mathbf G+\mathbf G^\top}{2}\succ m\mathbf{I}_N.
\]
for some $m>0$ implies that for every $\mathbf p\in\mathcal P$, the operator
    \[
    \mathbf F(\mathbf x):=
    \boldsymbol{\psi}(\mathbf x)-\delta \mathbf G\mathbf x+\mathbf B\mathbf p
    \]
    is $m$-strongly monotone on $\mathcal X$ (see Appendix~\ref{app:proof-monotonicity-F}), that is,
    \[
    \langle \mathbf F(\mathbf x)-\mathbf F(\mathbf y),\mathbf x-\mathbf y\rangle
    \ge m\|\mathbf x-\mathbf y\|_2^2,
    \qquad \forall \mathbf x,\mathbf y\in\mathcal X.
    \]
    
Therefore, by strong monotonicity of $\mathbf F$,
\begin{align*}
m\|\widehat{\mathbf x}^{\star}(\mathbf p)-\mathbf x^\star(\mathbf p)\|_2^2
&\le
\left\langle
\mathbf F(\widehat{\mathbf x}^{\star}(\mathbf p))
-
\mathbf F(\mathbf x^\star(\mathbf p)),
\widehat{\mathbf x}^{\star}(\mathbf p)-\mathbf x^\star(\mathbf p)
\right\rangle \\
&=
\left\langle
\boldsymbol{\psi}(\widehat{\mathbf x}^{\star}(\mathbf p))
-
\widehat{\boldsymbol{\psi}}(\widehat{\mathbf x}^{\star}(\mathbf p)),
\widehat{\mathbf x}^{\star}(\mathbf p)-\mathbf x^\star(\mathbf p)
\right\rangle \\
&\leq \|
\boldsymbol{\psi}(\widehat{\mathbf x}^{\star}(\mathbf p))
-
\widehat{\boldsymbol{\psi}}(\widehat{\mathbf x}^{\star}(\mathbf p))\|_{\infty}
\|\widehat{\mathbf x}^{\star}(\mathbf p)-\mathbf x^\star(\mathbf p)
\|_1 \\
& \leq \|
\boldsymbol{\psi}(\widehat{\mathbf x}^{\star}(\mathbf p))
-
\widehat{\boldsymbol{\psi}}(\widehat{\mathbf x}^{\star}(\mathbf p))\|_{\infty}\sqrt N
\|\widehat{\mathbf x}^{\star}(\mathbf p)-\mathbf x^\star(\mathbf p)\|_2.
\end{align*}

then, on $\mathcal{E}_n$ we have
\begin{equation}\label{eq:equilibrium_stability_bound}
\sup_{\mathbf p\in\mathcal P}
\|\widehat{\mathbf x}^{\star}(\mathbf p)-\mathbf x^\star(\mathbf p)\|_2
\le
\sqrt N\,\rho_n^{\nicefrac{\alpha}{2\alpha +1}}
\end{equation}
where in the second inequality we used that $\mathcal{P}\subset \mathbf{Q}(\mathring{\mathcal{X}})$, where $\mathbf{Q}=-\mathbf{B}^{-1} \circ \mathbf{F}$, hence for every $\mathbf{p} \in \mathcal{P}$, $\mathbf{x}^{\star}(\mathbf{p})$ is inside $\mathring{\mathcal{X}}$ and bounded away from the boundary of $\mathcal{X}$, hence for $n$ large enough 

$$
\sup_{\mathbf p \in \mathcal{P}}\|
\boldsymbol{\psi}(\widehat{\mathbf x}^{\star}(\mathbf p))
-
\widehat{\boldsymbol{\psi}}(\widehat{\mathbf x}^{\star}(\mathbf p))\|_{\infty} \leq \sup_{\mathbf x \in \mathring{\mathcal{X}}}\|
\boldsymbol{\psi}(\mathbf x)
-
\widehat{\boldsymbol{\psi}}(\mathbf x)\|_{\infty}  \lesssim \rho_n^{\nicefrac{\alpha}{2\alpha +1}}
$$

Hence, on $\mathcal E_n$, we have that for every $t \in E'$,
\begin{equation}\label{eq:conv_single_rev}
r(\mathbf p^{\star})-r(\mathbf p^t) \le
\frac{2P_{\max}\sqrt N}{m}\rho_n^{\nicefrac{\alpha}{2\alpha +1}}\cong \sqrt{N}\left(\frac{\ln n}{n}\right)^{\frac{\alpha}{2\alpha+1}} ,
\end{equation}
and the total exploitation contribution is bounded, on $\mathcal{E}_n$, by
\[
\sum_{t\in E'}
r(\mathbf p^\star)-r(\mathbf p^t)
\lesssim
|E'|\,\sqrt{N}\left(\frac{\ln n}{n}\right)^{\frac{\alpha}{2\alpha+1}}\leq T \sqrt{N}\left(\frac{\ln n}{n}\right)^{\frac{\alpha}{2\alpha+1}}
\]
\medskip
\noindent
\textbf{Step 3: Uniform error of the revenue function.}

Combining Steps 1 and 2, we get that, on $\mathcal{E}_n$, the total regret is
\[
\sum_{t\in [T]}
r(\mathbf p^\star)-r(\mathbf p^t)
\leq 2R_{\max}\,n
+T\left(\frac{\ln n}{n}\right)^{\frac{\alpha}{2\alpha+1}} \lesssim n+ \sqrt{N}T n^{-\frac{\alpha}{2\alpha+1}} \ln^{\frac{\alpha}{2\alpha+1}}(n)
\]

Now we choose $n = \lceil T^{\beta}\rceil$ for some $\beta\in (0,1)$. Then the previous display becomes
\[
\sum_{t\in [T]}
r(\mathbf p^\star)-r(\mathbf p^t)
\lesssim
T^\beta
+
\sqrt{N}\,T^{\,1-\beta\frac{\alpha}{2\alpha+1}}
\ln^{\frac{\alpha}{2\alpha+1}}(T).
\]
Balancing the two powers of $T$ gives
\[
\beta
=
1-\beta\frac{\alpha}{2\alpha+1},
\qquad\text{hence}\qquad
\beta=\frac{2\alpha+1}{3\alpha+1}.
\]
With this choice, and considering that our calculations are true on $\mathcal{E}_n$, where 
$$
\mathbb{P} (\mathcal{E}_n)\geq 1-\frac{N}{n^{\gamma-2}}, \qquad n \geq n_0.
$$
i.e., 
$$
\mathbb{P} (\mathcal{E}_n)\geq 1-\frac{N}{T^{\beta(\gamma-2)}}, \qquad T \geq \frac{\ln(n_0)}{\beta},\qquad n_0 \text{ defined in Lemma~\ref{lem:J2J3}},
$$
we conclude that, for $T$ sufficiently large ($T \geq \nicefrac{\ln(n_0)}{\beta}$), then
\[
R(T)=\sum_{t\in [T]}
r(\mathbf p^\star)-r(\mathbf p^t)
\lesssim \sqrt{N}\,
T^{\frac{2\alpha+1}{3\alpha+1}}
\ln^{\frac{\alpha}{2\alpha+1}}(T)
\]
with probability at least $1-\frac{N}{T^{\beta(\gamma-2)}}$, for any $\gamma>2$. This proves the first part of the Theorem. 

\medskip
\noindent
\textbf{Ste4: convergence of $\|\mathbf{p}^t-\mathbf{p}^{\star}\|_2$.}

We fix any $t \in E'$ and assume that $r$ is $\mu_r$-strongly concave. Then, since $\mathbf{p}^{\star}$ is the minimizer of $r$ over $\mathcal{P}$, we have

$$
\|\mathbf{p}^{t}-\mathbf{p}^{\star}\|_2^2 \lesssim r(\mathbf{p}^t) -
r(\mathbf{p}^{\star})
$$
and by \eqref{eq:conv_single_rev} we get 

$$
\|\mathbf{p}^{t}-\mathbf{p}^{\star}\|_2^2 \lesssim r(\mathbf{p}^t) -
r(\mathbf{p}^{\star})\lesssim \sqrt{N}\left(\frac{\ln n}{n}\right)^{\frac{\alpha}{2\alpha+1}}. 
$$

\end{proof}

\newpage

\section{Optimal Price for symmetric and non-negative networks}\label{app:linear-logistic}

In this section, we compare two extreme benchmark networks:
\[
\mathbf G=\mathbf O_N
\qquad\text{and}\qquad
\mathbf G=\mathbf 1\mathbf 1^\top-\mathbf I_N,
\]
that is, the no-network case and the complete network with unit off-diagonal interactions. We show that, in the Linear--Quadratic model with homogeneous consumers, the equilibrium consumption changes but the optimal monopoly price does not. By contrast, this invariance generally fails beyond the Linear--Quadratic case.

Throughout this subsection, assume homogeneous consumers:
\[
a_i=a,\qquad b_i=b,\qquad h_i=h,
\qquad \forall i\in\mathcal N.
\]
By symmetry, in both benchmark networks any interior equilibrium and any optimal price vector are necessarily symmetric, so that
\[
x_i^\star=x^\star,\qquad p_i^\star=p^\star,
\qquad \forall i\in\mathcal N.
\]

\begin{proposition}[Linear--Quadratic case]
\label{prop:complete_vs_null_linear}
Consider the Linear--Quadratic model
\[
\textstyle h(x)=\frac12 x^2,
\qquad \phi(x)=h'(x)=x.
\]
Then the following hold.

\begin{enumerate}[ leftmargin=1.2em]
    \item If $\mathbf G=\mathbf O_N$, then
    \[
    \textstyle x^\star=a-bp^\star,
    \qquad
    p^\star=\frac{a}{2b},
    \qquad
    x^\star=\frac{a}{2}.
    \]

    \item If $\mathbf G=\mathbf 1\mathbf 1^\top-\mathbf I_N$, and $\delta(N-1)<1$, then
    \[
    \textstyle x^\star=a-bp^\star+\delta (N-1)x^\star,
    \]
    hence
    \[
    \textstyle x^\star=\frac{a-bp^\star}{1-\delta(N-1)}.
    \]
    The seller's optimal price is still
    \[
    \textstyle p^\star=\frac{a}{2b},
    \]
    while the equilibrium consumption becomes
    \[
    \textstyle x^\star=\frac{a}{2(1-\delta(N-1))}.
    \]
\end{enumerate}

Therefore, in the Linear--Quadratic case, the complete network changes the equilibrium consumption level but not the optimal monopoly price.
\end{proposition}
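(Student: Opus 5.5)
Both parts come down to solving the interior first-order condition \eqref{FOC:consumer-global} explicitly and then maximizing a one-variable revenue function. Using the symmetry remark that precedes the proposition, we restrict to symmetric profiles $x_i^\star=x^\star$, $p_i^\star=p^\star$. With $\phi(x)=x$, the $i$-th equation of \eqref{FOC:consumer} reads $x^\star = a-bp^\star+\delta(\mathbf G\mathbf x^\star)_i$. For $\mathbf G=\mathbf O_N$ the spillover term vanishes and $x^\star=a-bp$. For $\mathbf G=\mathbf 1\mathbf 1^\top-\mathbf I_N$, every row of $\mathbf G$ sums to $N-1$, so $(\mathbf G\mathbf x)_i=(N-1)x^\star$ on symmetric profiles, and this yields $x^\star(1-\delta(N-1))=a-bp$.

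The assumption $\delta(N-1)<1$ does two jobs. First, $\rho(\mathbf G)=N-1$ for this nonnegative symmetric $\mathbf G$, so $\delta(N-1)<1$ is exactly the condition of Theorem~\ref{thm:comparison} with $\mathbf D=\mathbf I_N$. Theorem~\ref{thm:uniqueness_NE} then guarantees that the symmetric solution is the unique equilibrium. Second, the denominator $1-\delta(N-1)$ is positive, which gives the closed form $x^\star(p)=(a-bp)/(1-\delta(N-1))$.

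We then write total revenue as $r=N\,p\,x^\star(p)$ in both cases. The null case gives $r(p)=Np(a-bp)$, and the complete case gives $r(p)=\frac{N}{1-\delta(N-1)}\,p(a-bp)$. In both, $r$ is a positive constant times the same strictly concave quadratic $p(a-bp)$. Setting the derivative $a-2bp$ to zero gives the unique maximizer $p^\star=a/(2b)$. Substituting back gives $x^\star=a/2$ for the null network and $x^\star=a/(2(1-\delta(N-1)))$ for the complete network.

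The one step that needs care is justifying the reduction to uniform prices $p_i=p$ for the seller. Rather than rely only on the symmetry remark, I would argue directly. In the general case, $\mathbf x^\star(\mathbf p)=(\mathbf I-\delta\mathbf G)^{-1}(\mathbf a-b\mathbf p)$, so revenue is $r(\mathbf p)=\mathbf p^\top\mathbf M(a\mathbf 1-b\mathbf p)$ with $\mathbf M=(\mathbf I-\delta\mathbf G)^{-1}$. This matrix is symmetric, has $\mathbf 1$ as an eigenvector with eigenvalue $1/(1-\delta(N-1))>0$, and is positive definite by the variational condition. Hence $r$ is strictly concave with gradient $a\mathbf M\mathbf 1-2b\mathbf M\mathbf p$, which vanishes exactly when $\mathbf p=\frac{a}{2b}\mathbf 1$. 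This also shows the invariance holds beyond the symmetric ansatz.

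Finally, I will assume interiority, $\mathcal P\ni p^\star$ with $x^\star$ in the interior of $\mathcal X$, as the paper does throughout, so that the projection $\Pi_{\mathcal X}$ is inactive.
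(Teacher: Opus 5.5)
Your proposal is correct, and its core computation matches the paper's: both restrict to symmetric profiles, solve $x=a-bp$ or $x=(a-bp)/(1-\delta(N-1))$, and note that revenue is a positive constant times $p(a-bp)$. Hence the maximizer is $a/(2b)$ in both networks.

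Where you go beyond the paper is the uniform-price reduction. The paper only asserts that ``by symmetry'' any optimal price vector is symmetric. Symmetry of the problem alone shows that the set of maximizers is permutation-invariant, not that each maximizer is symmetric. Your direct argument closes this gap. You write $r(\mathbf p)=\mathbf p^\top\mathbf M(a\mathbf 1-b\mathbf p)$ with $\mathbf M=(\mathbf I_N-\delta\mathbf G)^{-1}\succ\mathbf O_N$, so $r$ is strictly concave with the unique stationary point $\frac{a}{2b}\mathbf 1$. This treats both networks at once ($\mathbf M=\mathbf I_N$ for the null network). It also proves the invariance for any symmetric $\mathbf G$ with $\mathbf I_N-\delta\mathbf G\succ\mathbf O_N$ under the same homogeneity of $a,b$, not just the two benchmarks.

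The paper's scalar symmetric reduction, by contrast, is the route it reuses in the nonlinear comparison of Proposition~\ref{prop:complete_vs_null_general}. There revenue is no longer a concave quadratic in $\mathbf p$, so your matrix argument would not carry over directly.

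One small point: state your interiority assumption on all of $\mathcal P$, as in $\mathcal P\subseteq\mathbf Q(\mathring{\mathcal X})$, not only at $\mathbf p^\star$. The strict-concavity argument uses the linear formula for $\mathbf x^\star(\mathbf p)$ at every feasible $\mathbf p$. This holds automatically when $\mathcal X_i=\mathbb R$.
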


\begin{proof}
If $\mathbf G=\mathbf O_N$, then the equilibrium condition reduces to
\[
x=a-bp.
\]
The seller's per-consumer revenue is
\[
r(p)=px=p(a-bp),
\]
whose first-order condition is
\[
a-2bp=0.
\]
Hence
\[
\textstyle p^\star=\frac{a}{2b},
\qquad
x^\star=a-bp^\star=\frac{a}{2}.
\]

Now let $\mathbf G=\mathbf 1\mathbf 1^\top-\mathbf I_N$. Since each row sum is $N-1$, the symmetric equilibrium condition is
\[
x=a-bp+\delta (N-1)x,
\]
that is,
\[
\textstyle x=\frac{a-bp}{1-\delta(N-1)}.
\]
The per-consumer revenue is therefore
\[
\textstyle r(p)=px
=
\frac{p(a-bp)}{1-\delta(N-1)}.
\]
Since the multiplicative factor $(1-\delta(N-1))^{-1}$ is constant in $p$, maximizing $r(p)$ is equivalent to maximizing $p(a-bp)$. Hence the first-order condition is again
\[
a-2bp=0,
\]
which gives
\[
\textstyle p^\star=\frac{a}{2b}.
\]
Substituting into the equilibrium expression yields
\[
\textstyle x^\star=\frac{a-bp^\star}{1-\delta(N-1)}
=
\frac{a}{2(1-\delta(N-1))}.
\]
\end{proof}

The previous proposition shows that the invariance of the optimal price is a special feature of the Linear--Quadratic model. In general, once $h$ is nonlinear, the complete network changes not only the equilibrium consumption, but also the seller's optimal price.

\begin{proposition}[General nonlinear case]
\label{prop:complete_vs_null_general}
Let $h$ be any continuously differentiable, strictly convex function, and let $\phi=h'$. Under the same homogeneity assumptions as above, the optimal monopoly price under $\mathbf G=\mathbf O_N$ is characterized by
\[
\textstyle a-\phi(x)-x\phi'(x)=0,
\qquad
p=\frac{a-\phi(x)}{b}.
\]
By contrast, under $\mathbf G=\mathbf 1\mathbf 1^\top-\mathbf I_N$, the optimal monopoly price is characterized by
\[
\textstyle a-\phi(x)-x\phi'(x)+2\delta (N-1)x=0,
\qquad
p=\frac{a-\phi(x)+\delta (N-1)x}{b}.
\]
Hence, except in special cases, the optimal price differs between the no-network and the complete-network benchmarks.
\end{proposition}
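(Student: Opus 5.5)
The plan is to mirror the proof of Proposition~\ref{prop:complete_vs_null_linear}: reduce both benchmarks to a scalar problem in the common consumption level $x$, and then maximize per-consumer revenue over $x$ instead of over $p$. By the homogeneity and symmetry already invoked at the start of this subsection, any interior equilibrium and any optimal price vector are symmetric, $x_i^\star=x$ and $p_i^\star=p$. Total revenue is then $N$ times the per-consumer revenue $px$, so it suffices to maximize $px$ subject to the scalar symmetric equilibrium condition.

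First I would write that condition using \eqref{FOC:consumer} at an interior point. For $\mathbf G=\mathbf O_N$ it reads $\phi(x)=a-bp$. For $\mathbf G=\mathbf 1\mathbf 1^\top-\mathbf I_N$, every row sum of $\mathbf G$ equals $N-1$, so it reads $\phi(x)-\delta(N-1)x=a-bp$. In each case $p$ is an explicit function of $x$:
\[
p=\frac{a-\phi(x)}{b}
\qquad\text{or}\qquad
p=\frac{a-\phi(x)+\delta(N-1)x}{b},
\]
which are the second relations in the statement.

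Next I would justify changing variables from $p$ to $x$. Strict convexity of $h$ makes $\phi$ strictly increasing. In the complete-network case, the uniqueness conditions of Theorem~\ref{thm:uniqueness_NE}, specialized via Theorem~\ref{thm:comparison} to $\delta(N-1)<\mu$, make $x\mapsto\phi(x)-\delta(N-1)x$ strictly increasing as well. So $p\leftrightarrow x$ is a bijection between interior prices and interior consumptions, and maximizing $r$ over $p$ is the same as maximizing $R(x)=x\,p(x)$ over $x$.

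Then I would take the first-order condition $R'(x)=0$, assuming $\phi$ is differentiable, which the statement implicitly needs since $\phi'$ appears. For the null network, $bR(x)=ax-x\phi(x)$, so $a-\phi(x)-x\phi'(x)=0$. For the complete network, $bR(x)=ax-x\phi(x)+\delta(N-1)x^2$, so $a-\phi(x)-x\phi'(x)+2\delta(N-1)x=0$. This recovers both characterizations.

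For the concluding sentence, I would argue that the two optimal prices coincide only in special cases. Let $x_0$ solve the null-network condition. At $x_0$ the complete-network condition equals $2\delta(N-1)x_0\neq0$ whenever $x_0>0$, so the optimal consumption levels differ. Equality of the prices then imposes an additional functional identity on $\phi$. I would illustrate with the linear case $\phi(x)=x$ from Proposition~\ref{prop:complete_vs_null_linear}: there the complete-network optimum is $x=\frac{a}{2(1-\delta(N-1))}$, and substituting gives exactly $p=\frac{a}{2b}$. This shows the identity holds for linear $\phi$ but fails generically, for instance for $\phi(x)=x^\gamma$ with $\gamma\neq1$.

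The main obstacle is this last comparison, which is inherently informal ("except in special cases"). The honest course is to show non-equality for a concrete nonlinear $\phi$, such as $\phi(x)=x^\gamma$ or the logit specification, rather than to prove a general statement. A secondary issue is that the first-order condition characterizes the optimum only under interiority and uniqueness of the stationary point. I would simply state these as the standing interiority assumption used throughout the paper.
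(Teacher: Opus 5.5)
Your proposal is correct and matches the paper's proof: the paper likewise inverts the symmetric equilibrium condition to write $p=\frac{a-\phi(x)}{b}$, respectively $p=\frac{a-\phi(x)+\delta(N-1)x}{b}$, and then differentiates the per-consumer revenue $x\,p(x)$ in $x$ to obtain the two stationarity conditions. Your two additions are sound and go beyond what the paper writes: the paper does not justify the change of variables (your justification uses strong convexity via $\delta(N-1)<\mu$, not just the stated strict convexity), and it does not verify the ``except in special cases'' clause (for $\phi(x)=x^\gamma$, price equality does reduce to $(1-\gamma)\delta(N-1)x_1=0$).
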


\begin{proof}
If $\mathbf G=\mathbf O_N$, then $x=\phi^{-1}(a-bp)$, equivalently
\[
\textstyle p=\frac{a-\phi(x)}{b}.
\]
Thus the per-consumer revenue can be written as
\[
\textstyle r(x)=px=\frac{x(a-\phi(x))}{b}.
\]
Differentiating with respect to $x$ gives
\[
\textstyle r'(x)=\frac{1}{b}\bigl(a-\phi(x)-x\phi'(x)\bigr),
\]
which proves the first characterization.

If instead $\mathbf G=\mathbf 1\mathbf 1^\top-\mathbf I_N$, then by symmetry the equilibrium condition becomes
\[
\phi(x)-\delta (N-1)x=a-bp,
\]
that is,
\[
\textstyle p=\frac{a-\phi(x)+\delta (N-1)x}{b}.
\]
Hence
\[
\textstyle r(x)=\frac{x(a-\phi(x)+\delta (N-1)x)}{b}.
\]
Differentiating gives
\[
\textstyle r'(x)
=
\frac{1}{b}\bigl(a-\phi(x)-x\phi'(x)+2\delta (N-1)x\bigr),
\]
which proves the second characterization.
\end{proof}

Proposition~\ref{prop:complete_vs_null_general} shows that the equality of the optimal prices in Proposition~\ref{prop:complete_vs_null_linear} is not a generic network-invariance property, but rather a consequence of the linear form $\phi(x)=x$. Indeed, when $\phi(x)=x$, the extra term $2\delta (N-1)x$ introduced by the complete network is exactly offset when one translates the optimal $x^\star$ back into the price $p^\star$. This cancellation no longer holds for a general nonlinear $\phi$.

\begin{example}[Discrete-choice/logistic model]
\label{ex:complete_vs_null_logistic}
Consider
\[
h(x)=x\ln x+(1-x)\ln(1-x),
\qquad x\in(0,1),
\]
so that
\[
\textstyle \phi(x)=\ln \left(\frac{x}{1-x}\right),
\qquad
\phi'(x)=\frac{1}{x(1-x)}.
\]

If $\mathbf G=\mathbf O_N$, the optimal $x^\star\in(0,1)$ is characterized by
\[
\textstyle a-\ln \left(\frac{x}{1-x}\right)-\frac{1}{1-x}=0,
\]
and the corresponding price is
\[
\textstyle p^\star=\frac{a-\ln \left(\frac{x^\star}{1-x^\star}\right)}{b}.
\]

If $\mathbf G=\mathbf 1\mathbf 1^\top-\mathbf I_N$, the optimal $x^\star\in(0,1)$ is characterized by
\[
\textstyle a-\ln \left(\frac{x}{1-x}\right)-\frac{1}{1-x}+2\delta (N-1)x=0,
\]
and the corresponding price is
\[
\textstyle p^\star
=
\frac{a-\ln \left(\frac{x^\star}{1-x^\star}\right)+\delta (N-1)x^\star}{b}.
\]

Thus, unlike the Linear--Quadratic case, the optimal monopoly price in the discrete-choice model is generally different in the no-network and complete-network benchmarks.
\end{example}

\newpage

\section{Network Communities and their pricing policies}\label{app:examples_of_networks}

Suppose that, after reordering the consumers, the interaction matrix is block diagonal:
\[
\mathbf G
=
\operatorname{diag} \bigl(\mathbf G^{(1)},\dots,\mathbf G^{(K)}\bigr),
\]
where each block $\mathbf G^{(k)}$ represents one community.  Then there are no spillovers across different communities, and the consumer equilibrium problem decomposes block by block. Note that this model has to be interpreted in expectation, in the sense that the observed network is 
$$
\widetilde{\mathbf{G}} = \mathbf{G} + \mathbf{E}
$$
where $\mathbf{E}$ is a $N \times N$ random matrix with mean zero. To see why the consumer equilibrium problem decomposes block by block, consider the partition
\[
\mathbf x=(\mathbf x^{(1)},\dots,\mathbf x^{(K)}),\qquad
\mathbf p=(\mathbf p^{(1)},\dots,\mathbf p^{(K)}),\qquad
\mathbf a=(\mathbf a^{(1)},\dots,\mathbf a^{(K)}),
\]
and similarly write
\[
\mathbf B=\operatorname{diag} \bigl(\mathbf B^{(1)},\dots,\mathbf B^{(K)}\bigr),
\qquad
\mathbf D=\operatorname{diag} \bigl(\mathbf D^{(1)},\dots,\mathbf D^{(K)}\bigr),
\]
then the equilibrium condition \eqref{FOC:consumer-global} becomes, for each block $k$,
\begin{equation}\label{eq:equil-communities}
\mathbf a^{(k)}-\mathbf B^{(k)}\mathbf p^{(k)}
+\delta \mathbf G^{(k)}\mathbf x^{(k)}
-\boldsymbol\phi^{(k)}(\mathbf x^{(k)})=\mathbf 0.
\end{equation}
Hence, each community behaves as an independent submarket. This immediately implies that both uniqueness conditions decompose by block. The contraction condition
\[
\rho \left(\delta \mathbf D^{-1}|\mathbf G|\right)<1
\]
is equivalent to
\[
\rho \left(\delta (\mathbf D^{(k)})^{-1}|\mathbf G^{(k)}|\right)<1,
\qquad \forall k=1,\dots,K,
\]
because the spectral radius of a block-diagonal matrix is the maximum of the spectral radii of its blocks. Likewise, the variational condition
\[
\mathbf D-\delta\frac{\mathbf G+\mathbf G^\top}{2}\succ \mathbf{O}_N
\]
holds if and only if
\[
\mathbf D^{(k)}-\delta\frac{\mathbf G^{(k)}+(\mathbf G^{(k)})^\top}{2}\succ \mathbf{O}_N,
\qquad \forall k=1,\dots,K.
\]

Therefore, the eigenvalues of each block describe the internal strength of reinforcement within that community. In the symmetric entry-wise nonnegative case, the two uniqueness conditions coincide on each block and reduce to
\[
\delta\,\rho \left((\mathbf D^{(k)})^{-1}\mathbf G^{(k)}\right)<1,
\qquad \forall k=1,\dots,K.
\]
Thus, the leading eigenvalue of block $k$ measures how close community $k$ is to the instability threshold: larger eigenvalues correspond to stronger within-community amplification.

From \eqref{eq:equil-communities} we have that for each block $k$,
$$
 \mathbf x^{\star,(k)}(\mathbf p^{(k)}),
$$
that is, the demand for block $k$ depends only on the price of that block. Hence, the seller's revenue decomposes as
\[
r(\mathbf p)
=
\sum_{k=1}^K
(\mathbf p^{(k)})^\top \mathbf x^{\star,(k)}(\mathbf p^{(k)}),
\]
hence, the monopoli problem separates
$$
\max _{\mathbf{p}} r(\mathbf{p})=\sum_{k=1}^K \max _{\mathbf{p}^{(k)}}\mathbf{p}^{(k)\top} \mathbf{x}^{\star,(k)}(\mathbf{p}^{(k)}) .
$$
mare precisely
$$
\mathbf{p}^{\star}=(\mathbf{p}^{\star,(1)}, \ldots, \mathbf{p}^{\star,(K)}),
$$
where, every $\mathbf{p}^{\star,(k)}$ solves

$$
\mathbf{p}^{\star,(k)} \in \arg \max _{\mathbf{p}^{(k)} \in \mathcal{P}^{(k)}}\mathbf{p}^{(k)\top} \mathbf{x}^{\star,(k)}(\mathbf{p}^{(k)}) .
$$
Once the network is block diagonal, each community can be solved independently, so the price for consumer $i$ only depends on the data inside consumer $i$’s own block. 

More concretely, suppose $\mathbf{G}=\operatorname{diag}\left(\mathbf{G}^{(1)}, \ldots, \mathbf{G}^{(K)}\right)$, and consumer $i$ belongs to block $k$. Then there are no links between block $k$ and the other blocks, so: \begin{itemize}
    \item consumers in other blocks do not affect $x_i^{\star}$,
    \item changing prices in other blocks does not affect the demand in block $k$,
    \item and therefore the seller's optimal choice for $p_i$ depends only on: the local network $\mathbf{G}^{(k)}$, the local parameters $a_j, b_j, h_j$ for $j$ in block $k$, and the prices inside block $k$.
\end{itemize}

It does not depend on what happens in the other communities. Thus, the seller generally does discriminate across blocks whenever the blocks differ in size, density, or spectral structure. Two consumers with identical primitive parameters $(a_i,b_i,h_i)$ may receive different prices solely because they belong to different communities. In the Linear--Quadratic case, this is completely governed by the block-specific resolvents $(\mathbf I-\delta \mathbf G^{(k)})^{-1}$, equivalently by the Bonacich-type centralities computed within each community.

\newpage
\section{Supplementary Details on Network Influencers}
\label{sec:influencer_appendix}

\subsection{Invertibility of the Jacobian Matrix}
\label{sec:invertibility_of_J_supp}
The formal definition of the intrinsic influential value (IIV) relies on the invertibility of the game's Jacobian matrix $\mathbf{J}(\mathbf{x}) = D_{\boldsymbol{\phi}}(\mathbf{x}) - \delta \mathbf{G}$. This matrix is guaranteed to be invertible for every $\mathbf{x} \in \mathcal{X}$ under either of our two uniqueness criteria from Theorem~\ref{thm:uniqueness_NE}. Specifically, $\mathbf{J}(\mathbf{x})^{-1}$ is well-defined if the variational condition holds, or if the network is nonnegative ($\mathbf{G} = |\mathbf{G}|$) and satisfies the contraction condition $\rho(\delta \mathbf{D}^{-1}\mathbf{G}) < 1$ (see Appedix~\ref{sec:invertibility_of_J}).

\subsection{IIV in the Discrete-Choice Model}
Under the assumptions of Proposition~\ref{prop:v_i-equality}, we can explicitly evaluate the intrinsic influential value for specific utility forms. Consider the Discrete-Choice model, where $h_i(x) = x \ln x + (1-x) \ln(1-x)$ for $x \in [0,1]$. Here, we have:
\[
    h_i''(x) = \phi_i'(x) = \frac{1}{x(1-x)} \ge 4.
\]
This inequality holds with equality if and only if $x = \tfrac{1}{2}$. Thus, the strong convexity parameter evaluates to $\mathbf{D} = 4\mathbf{I}_N$. Applying Proposition~\ref{prop:v_i-equality}, the IIV reduces exactly to:
\[
    V_i = \bigl[\mathbf{1}^\top(4\mathbf{I}_N - \delta \mathbf{G})^{-1}\bigr]_i.
\]

\subsection{Signed Networks and Dual-Space Interpretations}
For general signed or non-symmetric networks, an explicit analytical bound for $V_i$ mapping directly to $(\mathbf{D} - \delta \mathbf{G})^{-1}$ is not generally available. The proof of Proposition~\ref{prop:v_i-equality} relies on the entry-wise monotonicity of the Neumann series, which breaks down once $\mathbf{G}$ contains mixed signs. Consequently, for nonlinear models, the assumption that $\mathbf{G} = |\mathbf{G}|$ is typically required to leverage the analytic bound $V_i \le \bigl[\mathbf{1}^\top(\mathbf{D} - \delta \mathbf{G})^{-1}\bigr]_i$.

However, in the baseline Linear--Quadratic case where $h_i(x) = \frac{1}{2}x^2$, the assumption that $\mathbf{G} = |\mathbf{G}|$ is unnecessary. Because $D_{\boldsymbol{\phi}}(\mathbf{x}) = \mathbf{I}_N$, the Jacobian $\mathbf{J}(\mathbf{x}) = \mathbf{I}_N - \delta \mathbf{G}$ is independent of $\mathbf{x}$, recovering the standard Katz--Bonacich centralities with parameter $\mathbf{b}(\delta\mathbf{G}) = \mathbf{1}^\top(\mathbf{I}_N - \delta \mathbf{G})^{-1}$. 

More broadly, our formalization of $V_i$ with a general $\boldsymbol{\phi}$ serves as a generalization of Katz--Bonacich centrality evaluated in the dual space. This space is induced by the mapping $\boldsymbol{\phi}(\mathbf{x}) = \nabla \boldsymbol{h}(\mathbf{x}) = (\nabla \boldsymbol{h}^*(\mathbf{x}))^{-1}$, where $\boldsymbol{h}^*$ denotes the convex conjugate (dual function) of $\mathbf{x} \mapsto \boldsymbol{h}(\mathbf{x}) = (h_1(x_1), \dots, h_N(x_N))$.

\subsection{Influencer Pricing and the Room for Discounting}
\label{subsec:influencer-pricing}

\begin{proposition}[Pure influencer: implicit pricing equation and quadratic revenue loss]
\label{prop:general_pure_influencer}
Assume that $\mathcal P\subseteq \mathbb [0,\infty)^{N}$ contains $\{0\}$, is nonempty, closed, and convex. Assume the hypotheses of Theorem~\ref{thm:general_competitive_NE_existence_monopolist}, let Assumption~\ref{ass:strong-convexity} hold, and suppose that
\[
\textstyle \mathbf D-\delta\,\frac{\mathbf G+\mathbf G^\top}{2}\succ \mathbf{O}_N\qquad \text{ or } \qquad \{\mathbf{G}=|\mathbf{G}| \text{, and } \rho(\delta \mathbf{G})<1\}
\]
is satisfied (hence for every $\mathbf p\in\mathcal P$,  $\mathbf x^\star(\mathbf p)\in\mathcal X$ exists and is unique by Theorem~\ref{thm:uniqueness_NE}). Moreover, assume that $\mathbf x^\star(\mathbf p)\in\mathring{\mathcal X}$ for all $\mathbf p \in \mathcal{P}$, and that $\mathbf p^\star\in\mathcal P$ is an interior global maximizer of the seller's revenue $r(\mathbf p):=\mathbf p^\top \mathbf x^\star(\mathbf p)$. Now let $i\in\mathcal N$, such that the $i$-th row of $\mathbf{G}$ is zero. Then the following hold:

\begin{enumerate}[ leftmargin=1.2em]
    \item $p_i^\star
    =
     b_i^{-1}[ a_i-\phi_i(b_i\sum_{j=1}^N
\bigl[\mathbf J(\mathbf x^\star(\mathbf p^\star))^{-1}\bigr]_{ji}p_j^\star)]>0$.
    
    \item Let $\widetilde{\mathbf p}\in\arg\max_{\mathbf p\in \widetilde{\mathcal P}_i} r(\mathbf p)$ with $\widetilde{\mathcal P}_i:=\{\mathbf p\in\mathcal P:\ p_i=0\}$. Then, if exists $\mathbf Q:=-\nabla^2 r(\mathbf p^\star)\succ \mathbf{O}_N$, there exists $\theta_i>0$ such that $r(\mathbf p^\star)-r(\widetilde{\mathbf p})
    =
    \frac12\,\theta_i\,(p_i^\star)^2
    +
    o ((p_i^\star)^2)$, as $p_i^\star\to 0$.
\end{enumerate}
\end{proposition}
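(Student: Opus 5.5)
The plan has three steps: decouple the influencer's equilibrium equation, derive and invert the seller's first-order condition for $p_i^\star$, and compute the revenue loss from forcing $p_i=0$ via a constrained second-order expansion.

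\emph{Step 1 (decoupling).} Since $\mathbf x^\star(\mathbf p)\in\mathring{\mathcal X}$, the equilibrium satisfies the unconstrained first-order system \eqref{FOC:consumer-global}, namely $\boldsymbol\phi(\mathbf x^\star(\mathbf p))-\delta\mathbf G\mathbf x^\star(\mathbf p)=\mathbf a-\mathbf B\mathbf p$. Because the $i$-th row of $\mathbf G$ is zero, the $i$-th equation reduces to $\phi_i(x_i^\star(\mathbf p))=a_i-b_ip_i$. Strong convexity (Assumption~\ref{ass:strong-convexity}) makes $\phi_i$ strictly increasing and hence invertible. Therefore $x_i^\star(\mathbf p)=\phi_i^{-1}(a_i-b_ip_i)$, so the influencer's demand depends only on her own price.

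\emph{Step 2 (pricing equation).} Differentiate the equilibrium system implicitly in $\mathbf p$. Under either hypothesis, $\mathbf J(\mathbf x)=D_{\boldsymbol\phi}(\mathbf x)-\delta\mathbf G$ is invertible: use Corollary~\ref{cor:comparison_local_jacobian_variational} in the variational case and Proposition~\ref{prop:comparison_local_jacobian_contraction} in the nonnegative contraction case. This gives $D_{\mathbf p}\mathbf x^\star=-\mathbf J^{-1}\mathbf B$, and hence $\nabla r(\mathbf p)=\mathbf x^\star(\mathbf p)-\mathbf B\mathbf J^{-\top}\mathbf p$. Since $\mathbf p^\star$ is an interior maximizer, $\nabla r(\mathbf p^\star)=\mathbf 0$. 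Its $i$-th coordinate reads $x_i^\star(\mathbf p^\star)=b_i\sum_j[\mathbf J^{-1}]_{ji}p_j^\star$. Substituting Step 1 and applying $\phi_i$ to both sides yields the implicit formula in item 1. Strict positivity of $p_i^\star$ comes from interiority: $\mathcal P\subseteq[0,\infty)^N$, and an interior point cannot lie on the face $p_i=0$.

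\emph{Step 3 (quadratic loss).} Assume $r$ is $C^2$ near $\mathbf p^\star$, which follows from the implicit function theorem if $\boldsymbol\phi\in C^1$. Expand
\[
r(\mathbf p)=r(\mathbf p^\star)-\tfrac12(\mathbf p-\mathbf p^\star)^\top\mathbf Q(\mathbf p-\mathbf p^\star)+o(\|\mathbf p-\mathbf p^\star\|^2),
\]
using $\nabla r(\mathbf p^\star)=\mathbf 0$ and $\mathbf Q\succ\mathbf O_N$. Write $\Delta=\mathbf p-\mathbf p^\star$. The constraint $p_i=0$ becomes $\mathbf e_i^\top\Delta=-p_i^\star$. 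Minimizing $\tfrac12\Delta^\top\mathbf Q\Delta$ subject to this constraint gives, by a Lagrange multiplier, $\Delta^\ast=-p_i^\star\mathbf Q^{-1}\mathbf e_i/(\mathbf e_i^\top\mathbf Q^{-1}\mathbf e_i)$. The minimal value is $\tfrac12(p_i^\star)^2/(\mathbf e_i^\top\mathbf Q^{-1}\mathbf e_i)$, so I would set $\theta_i:=(\mathbf e_i^\top\mathbf Q^{-1}\mathbf e_i)^{-1}>0$. For small $p_i^\star$, the point $\mathbf p^\star+\Delta^\ast$ lies in $\mathcal P$ because $\mathbf p^\star$ is interior. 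Evaluating $r$ there shows $r(\mathbf p^\star)-r(\widetilde{\mathbf p})\le\tfrac12\theta_i(p_i^\star)^2+o((p_i^\star)^2)$.

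\emph{Main obstacle: the matching lower bound.} For the reverse inequality, I would use a localization argument. Choose a neighborhood $U$ of $\mathbf p^\star$ on which $r(\mathbf p)\le r(\mathbf p^\star)-\tfrac{c}{2}\|\mathbf p-\mathbf p^\star\|^2$ for some $c>0$. By the upper bound just proved, the constrained maximizer $\widetilde{\mathbf p}$ must satisfy $\|\widetilde{\mathbf p}-\mathbf p^\star\|=O(p_i^\star)$; otherwise its revenue would fall below the value already achieved. This requires global uniqueness of $\mathbf p^\star$ together with a compactness or continuity argument to exclude far-away near-maximizers. Once $\widetilde{\mathbf p}$ is localized, the Taylor remainder is $o((p_i^\star)^2)$, and the quadratic minimization gives the matching lower bound.

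This localization is the step I expect to be delicate. Making "as $p_i^\star\to0$" rigorous requires a family of problems in which $\mathbf Q$ stays uniformly positive definite, and I would state that as an implicit regularity condition.
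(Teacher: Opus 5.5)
Your proposal follows the paper's proof essentially step for step. It uses the same decoupling $x_i^\star(\mathbf p)=\phi_i^{-1}(a_i-b_ip_i)$ and the same first-order condition $x_i^\star(\mathbf p^\star)=b_i\sum_j[\mathbf J(\mathbf x^\star(\mathbf p^\star))^{-1}]_{ji}p_j^\star$, inverted through $\phi_i$, with positivity coming from interiority. It then solves the same equality-constrained quadratic problem, with minimizer $\Delta^\ast=-p_i^\star\mathbf Q^{-1}\mathbf e_i/(\mathbf e_i^\top\mathbf Q^{-1}\mathbf e_i)$.

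The paper simply asserts that ``to second order'' $\widetilde{\mathbf p}$ solves this quadratic problem, so the localization $\|\widetilde{\mathbf p}-\mathbf p^\star\|=O(p_i^\star)$ you flag is left implicit there too. So is feasibility of $\mathbf p^\star+\Delta^\ast$. That needs the remaining constraints of $\mathcal P$ to stay bounded away from $\mathbf p^\star$; interiority alone is not enough, since $\|\Delta^\ast\|_2\ge p_i^\star\ge\operatorname{dist}(\mathbf p^\star,\partial\mathcal P)$.
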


\begin{remark}[Connection with the external demand-drop index]
If $p_i^\star$ is close to zero, then forcing the constraint $p_i=0$ changes total revenue only by $r(\mathbf p^\star)-r(\widetilde{\mathbf p}) = \frac12\,\theta_i\,(p_i^\star)^2+o ((p_i^\star)^2)$. Hence, for a strongly influential node, the seller may optimally choose a very small price, and once that price is close to zero, setting it exactly to zero has only a second-order effect on revenue.
\end{remark}

\begin{corollary}[A sufficient condition for a strictly positive optimal price]\label{cor:structural_influencer_zero_price}
Assume that $\mathcal P= \times_{i \in \mathcal{N}}\mathbb [0,\bar{p}_i]^{N}$ for some $0<\bar{p}_i<\infty$ and let $\bar{p}:=\max_{i \in \mathcal{N}} \bar{p}_i$. Assume the hypotheses of Theorem~\ref{thm:general_competitive_NE_existence_monopolist}, let Assumption~\ref{ass:strong-convexity} hold. Additionally, assume that $\mathbf{G} = |\mathbf{G}|$ and the contraction condition holds: $\rho(\delta \mathbf G)<1$ (hence for every $\mathbf p\in\mathcal P$,  $\mathbf x^\star(\mathbf p)\in\mathcal X$ exists and is unique by Theorem~\ref{thm:uniqueness_NE}). Moreover, assume that $\mathbf x^\star(\mathbf p)\in\mathring{\mathcal X}$ for all $\mathbf p \in \mathcal{P}$, and that $\mathbf p^\star\in\mathcal P$ is an interior local maximizer of the seller's revenue $r(\mathbf p):=\mathbf p^\top \mathbf x^\star(\mathbf p)$. Now let $i\in\mathcal N$, such that the $i$-th row of $\mathbf{G}$ is zero. Then 
$$
\textstyle V_i<\frac{\phi_i^{-1}(a_i)}{\overline p\, b_i} \quad \implies \quad p_i^\star \ge \frac{a_i-\phi_i(\bar{p}b_iV_i)}{b_i}>0.
$$
\end{corollary}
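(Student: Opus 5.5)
The plan is to start from the implicit pricing equation in part 1 of Proposition~\ref{prop:general_pure_influencer} and bound its argument using the definition of $V_i$. Under the stated hypotheses ($\mathbf G=|\mathbf G|$, contraction condition, interior equilibrium, interior local maximizer $\mathbf p^\star$, and $i$-th row of $\mathbf G$ zero), that proposition gives
\[
p_i^\star=\frac{a_i-\phi_i\bigl(z_i^\star\bigr)}{b_i},\qquad z_i^\star\triangleq b_i\sum_{j=1}^N\bigl[\mathbf J(\mathbf x^\star(\mathbf p^\star))^{-1}\bigr]_{ji}p_j^\star .
\]
Only the first-order stationarity part of that proposition is needed, not the quadratic-loss part. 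So it suffices for $\mathbf p^\star$ to be an interior critical point, which a local maximizer is.

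The key step is an upper bound on $z_i^\star$. By Proposition~\ref{prop:comparison_local_jacobian_contraction}, $\mathbf J(\mathbf x)^{-1}$ exists and is entrywise nonnegative for every $\mathbf x$. Since $0\le p_j^\star\le\bar p_j\le\bar p$ for all $j$, each term in the sum is bounded by $\bar p$ times the corresponding entry, which gives
\[
z_i^\star\le \bar p\, b_i\sum_{j}\bigl[\mathbf J(\mathbf x^\star(\mathbf p^\star))^{-1}\bigr]_{ji}=\bar p\, b_i\bigl[\mathbf 1^\top\mathbf J(\mathbf x^\star(\mathbf p^\star))^{-1}\bigr]_i\le \bar p\, b_i V_i .
\]
The last inequality is the definition of $V_i$ as a supremum over $\mathcal X$, using $\mathbf x^\star(\mathbf p^\star)\in\mathcal X$.

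Next, I use monotonicity of $\phi_i$. By Assumption~\ref{ass:strong-convexity}, $\phi_i=h_i'$ is strictly increasing, so $\phi_i(z_i^\star)\le\phi_i(\bar p b_iV_i)$. This yields $p_i^\star\ge (a_i-\phi_i(\bar p b_i V_i))/b_i$. The hypothesis $V_i<\phi_i^{-1}(a_i)/(\bar p b_i)$ means $\bar p b_i V_i<\phi_i^{-1}(a_i)$. Applying the increasing map $\phi_i$ gives $\phi_i(\bar p b_iV_i)<a_i$, so the lower bound is strictly positive.

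The main obstacle is a domain issue: $\phi_i$ and $\phi_i^{-1}$ must be well defined at the points used. Specifically, $\bar p b_i V_i$ must lie in $\mathcal X_i$, or more generally in the domain of $\phi_i$, and $a_i$ must lie in the range of $\phi_i$. I would handle this by noting that $z_i^\star=x_i^\star(\mathbf p^\star)\in\mathcal X_i$ (Step 1 of the proof of the proposition) and by reading $\phi_i^{-1}(a_i)$ as the assumed-existing preimage, with the comparison taking place on the interval $\mathcal X_i$. If $\bar p b_iV_i$ falls outside $\mathcal X_i$, the displayed bound should be interpreted via the monotone extension of $\phi_i$. A secondary point is sign bookkeeping: the step bounding $z_i^\star$ uses $p_j^\star\ge0$ together with nonnegativity of $\mathbf J^{-1}$, and both are guaranteed by the form of $\mathcal P$ and by Proposition~\ref{prop:comparison_local_jacobian_contraction}.
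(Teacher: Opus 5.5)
Your proposal is correct and follows the paper's proof essentially line by line. Like the paper, you invoke part~1 of Proposition~\ref{prop:general_pure_influencer}, use the entrywise nonnegativity of $\mathbf J^{-1}$ from Proposition~\ref{prop:comparison_local_jacobian_contraction} together with $p_j^\star\le\bar p$ to bound the argument of $\phi_i$ by $\bar p\,b_iV_i$, and conclude by monotonicity of $\phi_i$. Your remark that only the first-order condition is needed, so a local maximizer suffices, makes explicit a point the paper leaves implicit. Your domain worry also resolves itself: $x_i^\star(\mathbf p^\star)\le\bar p\,b_iV_i<\phi_i^{-1}(a_i)$ with both outer points in the convex set $\mathcal X_i$, so $\bar p\,b_iV_i\in\mathcal X_i$.
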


\begin{remark}
Corollary~\ref{cor:structural_influencer_zero_price} does not imply a direct ordering of the optimal prices $p_i^\star$ across consumers, since it only provides a \emph{lower bound} on each $p_i^\star$ for the $i$ with $i$-th row of $\mathbf G$ is null. However, it does allow a meaningful pairwise comparison of the seller's \emph{structural room for discounting} to different consumers. To see this, suppose that two consumers $i,k\in\mathcal N$ satisfy the assumptions of Corollary~\ref{cor:structural_influencer_zero_price}, and in addition share the same primitives: $a_i=a_k=:a$, $b_i=b_k=:b$, $\phi_i=\phi_k=:\phi$. Then the corollary yields the common lower-bound formula $p_\ell^\star \ge L(V_\ell)$, $L(V):=\frac{a-\phi(\bar p\,bV)}{b}$, $\ell\in\{i,k\}$. Since $\phi$ is increasing, the function $L(V)$ is decreasing in $V$. Therefore,
\[
\textstyle V_i>V_k
\qquad\Longrightarrow\qquad
L(V_i)<L(V_k).
\]
This means that influencers have the lowest lower bound on the price: the model allows the seller to discount that consumer more aggressively without violating the structural lower bound implied by optimality. In this sense, $V_i$ does not rank the optimal prices themselves, but it does rank how much \emph{room} the seller has to assign lower prices. Thus, more influential consumers are precisely those for whom the theory permits more aggressive discounting.
\end{remark}

\newpage

\section{Examples of utility functions covered by our model}\label{ex:utilities-covered}

\begin{example}\label{example_1}
By substituting different functional forms for $h_i\left(x_i\right)$, this framework encapsulates four canonical economic models. 

\noindent
\textbf{(1) The Linear-Quadratic Utility Model (Standard Goods)}: is defined with
$$
\begin{aligned}
h\left(x\right)=\tfrac{1}{2} x^2, \quad x \in \mathbb R,
\end{aligned}
$$
and is used heavily in standard network pricing literature, such as: the monopolistic model by \cite{ballester2006s}, which considers the case $b_i=0$ and $\mathbf{G}$; the monopolistic model by \cite{li2025price} which considers the case $b_i=1$ and $\mathbf{G}$ symmetric, and similarly, the multiseller model by \cite{chen2018competitive}, considers the case $b_i=1$ and $\mathbf{G}$ symmetric as well. 
\noindent

\textbf{(2) Stone-Geary Utility Model (Subsistence/Essential Goods):} requires a baseline minimum consumption $\gamma>0$ and parameter strenght $\beta>0$:
$$
\begin{aligned}
h\left(x\right)=-\beta \ln \left(x-\gamma\right),\quad x \in (\gamma,\infty).
\end{aligned}
$$

\noindent
\textbf{(3) Discrete Choice Utility Model (Standard Logit)}: Models market share probabilities $x \in [0,1]$ by defining intrinsic utility via a negative Shannon Entropy penalty:
$$
\begin{aligned}
h\left(x\right)=x \ln x+\left(1-x\right) \ln \left(1-x\right), \quad x \in [0,1],  \quad 0\ln (0) = 0.
\end{aligned}
$$
This model was used in \cite{chen2021duopoly} and  \cite{du2016optimal}.

\noindent
\textbf{(4) Exponential Utility Model}:  used a lot in the theory of risk aversion \citep{menezes1970theory,meyer2014theory}
$$
h(x) = e^{\gamma x} -1,\quad x \geq 0
$$
for some $\gamma >0$.

\noindent
\textbf{(5) Isoelastic Utility Model}: is used to express utility in terms of consumption or some other economic variable that a decision-maker is concerned with \cite{ljungqvist2018recursive, kale2009growth}. In statistics is also called Box-Cox transformation: for $x >0$
$$
h(x)= \begin{cases}-\frac{x^{1-\gamma}-1}{1-\gamma} & ,\gamma \in [0,1) \\ -\ln (x) & ,\gamma=1\end{cases}.
$$
\end{example}

\newpage

\section{Some examples of monopolistic Euclidean network topologies: follower and influencer}\label{app:more_examples}

We study special monopolistic Euclidean models: directed star networks and explicit optimal prices. In the directed star cases below, we have $\mathbf{G}^2=\mathbf{O}_N$, hence
\[
\mathbf{H}=(\mathbf{I}_N-\delta \mathbf{G})^{-1}=\mathbf{I}_N+\delta \mathbf{G}.
\]
For simplicity of exposition, in the following two examples we assume the price sensitivities are $\mathbf{B}=\mathbf{I}_N$.
\textbf{Case 1. Follower.} Suppose $g_{1j}=1$ for $j\neq 1$ and $g_{ij}=0$ otherwise.

\begin{figure}[h]
\centering
\begin{minipage}[c]{0.44\textwidth}
\centering
\[
\mathbf G=
\begin{pmatrix}
0 & 1 & 1 & 1 & 1\\
0 & 0 & 0 & 0 & 0\\
0 & 0 & 0 & 0 & 0\\
0 & 0 & 0 & 0 & 0\\
0 & 0 & 0 & 0 & 0\\
\end{pmatrix},
\]
\end{minipage}
\hfill
\begin{minipage}[c]{0.52\textwidth}
\centering
\begin{tikzpicture}[
  >=Latex,
  node/.style={circle, draw=black, thick, fill=black!8, font=\scriptsize, inner sep=1.2pt},
  edge/.style={->, draw=black!60, line width=0.8pt},
  elabel/.style={font=\tiny, fill=white, inner sep=0.8pt}
]
\node[node, minimum size=6.5mm] (1) at (-1.6,1.2) {$3$};
\node[node, minimum size=6.5mm] (2) at (-1.8,-0.6) {$2$};
\node[node, minimum size=9mm]   (3) at (0,0.2) {$1$};
\node[node, minimum size=6.5mm] (4) at (1.8,1.1) {$4$};
\node[node, minimum size=6.5mm] (5) at (1.7,-0.9) {$5$};

\draw[edge] (3) -- node[elabel, above left] {$1$} (1);
\draw[edge] (3) -- node[elabel, below left] {$1$} (2);
\draw[edge] (3) -- node[elabel, above right] {$1$} (4);
\draw[edge] (3) -- node[elabel, below right] {$1$} (5);

\node[draw=none, font=\tiny, align=left] at (0,-1.8)
{Node $1$ influences all the others: $g_{i1}>0$ for $i\neq 1$.};
\end{tikzpicture}
\end{minipage}
\end{figure}
Then
\[
\mathbf{G}=\mathbf{e}_1 s^\top,
\qquad
\mathbf{e}_1=(1,0,\dots,0)^\top,
\qquad
\mathbf{s}=(0,1,\dots,1)^\top.
\]
Therefore
\[
\mathbf{H}=\mathbf{I}_N+\delta \mathbf{e}_1 \mathbf{s}^\top,
\qquad
\mathbf{H}\mathbf{a}=\mathbf{a}+\delta\Big(\sum_{j=2}^n a_j\Big)\mathbf{e}_1.
\]
Let $A:=\sum_{j=2}^n a_j$. The first-order condition $(\mathbf{H}+\mathbf{H}^\top)\mathbf{p}=\mathbf{H}\mathbf{a}$ becomes
\[
\bigl(2\mathbf{I}_N+\delta(\mathbf{e}_1\mathbf{a}^\top+\mathbf{s}\mathbf{e}_1^\top)\bigr)\mathbf{p}=\mathbf{a}+\delta A \mathbf{e}_1.
\]
Coordinatewise,
\[
2p_1+\delta\sum_{j=2}^n p_j=a_1+\delta A,
\]
and, for $i\ge 2$,
\[
2p_i+\delta p_1=a_i.
\]
Thus, for $i\ge 2$, we have
\[
p_i^\star=\frac{a_i-\delta p_1^\star}{2},
\qquad
p_1^\star=\frac{2a_1+\delta A}{4-(n-1)\delta^2},
\qquad
A:=\sum_{j=2}^n a_j.
\]

Hence
\[
p_1^\star>p_i^\star
\iff
p_1^\star>\frac{a_i-\delta p_1^\star}{2}
\iff
(2+\delta)p_1^\star>a_i.
\]
Substituting the explicit formula for $p_1^\star$, we obtain
\[
p_1^\star>p_i^\star
\iff
(2+\delta)\frac{2a_1+\delta A}{4-(n-1)\delta^2}>a_i,
\]
that is,
\begin{equation}\label{eq:star_exact_condition}
p_1^\star>p_i^\star
\iff
(2+\delta)\bigl(2a_1+\delta A\bigr)
>
a_i\bigl(4-(n-1)\delta^2\bigr).
\end{equation}
Without further assumptions on the intercepts $a_i$, the inequality $p_1^\star>p_i^\star$ does \emph{not} hold in general. A simple sufficient condition is
\[
a_1\ge a_i,
\qquad \forall i\ge 2,
\]
assuming also $a_j\ge 0$ for all $j$. Indeed, since $A=\sum_{j=2}^n a_j\ge 0$, we have
\[
(2+\delta)\bigl(2a_1+\delta A\bigr)\ge 2(2+\delta)a_1.
\]
Moreover, if $a_i\le a_1$, then
\[
a_i\bigl(4-(n-1)\delta^2\bigr)
\le
a_1\bigl(4-(n-1)\delta^2\bigr).
\]
Thus it is enough to verify that
\[
2(2+\delta)a_1>a_1\bigl(4-(n-1)\delta^2\bigr),
\]
or equivalently,
\[
4+2\delta>4-(n-1)\delta^2.
\]
This is the same as
\[
2\delta+(n-1)\delta^2>0,
\]
which always holds for $\delta>0$. Hence, under the sufficient condition $a_1\ge a_i$ for all $i\ge 2$, we obtain
\[
p_1^\star>p_i^\star,
\qquad \forall i\ge 2.
\]

In particular, when $a_i=a$ for all $i$, condition \eqref{eq:star_exact_condition} is automatically satisfied, recovering the previous conclusion.

\textbf{Case 2: Influeencer.} Suppose instead $g_{i1}=1$ for $i\neq 1$, and $g_{ij}=0$ otherwise.

\begin{figure}[h]
\centering
\begin{minipage}[c]{0.44\textwidth}
\centering
\[
\mathbf G=
\begin{pmatrix}
0 & 0 & 0 & 0 & 0\\
1 & 0 & 0 & 0 & 0\\
1 & 0 & 0 & 0 & 0\\
1 & 0 & 0 & 0 & 0\\
1 & 0 & 0 & 0 & 0\\
\end{pmatrix},
\]
\end{minipage}
\hfill
\begin{minipage}[c]{0.52\textwidth}
\centering
\begin{tikzpicture}[
  >=Latex,
  node/.style={circle, draw=black, thick, fill=black!8, font=\scriptsize, inner sep=1.2pt},
  edge/.style={->, draw=black!60, line width=0.8pt},
  elabel/.style={font=\tiny, fill=white, inner sep=0.8pt}
]
\node[node, minimum size=6.5mm] (1) at (-1.6,1.2) {$2$};
\node[node, minimum size=6.5mm] (2) at (-1.8,-0.6) {$2$};
\node[node, minimum size=9mm]   (3) at (0,0.2) {$1$};
\node[node, minimum size=6.5mm] (4) at (1.8,1.1) {$4$};
\node[node, minimum size=6.5mm] (5) at (1.7,-0.9) {$5$};

\draw[edge] (1) -- node[elabel, above left] {$1$} (3);
\draw[edge] (2) -- node[elabel, below left] {$1$} (3);
\draw[edge] (4) -- node[elabel, above right] {$1$} (3);
\draw[edge] (5) -- node[elabel, below right] {$1$} (3);

\node[draw=none, font=\tiny, align=left] at (0,-1.8)
{Node $1$ is influenced by all the others: $g_{1j}=1$ for $j\neq 1$.};
\end{tikzpicture}
\end{minipage}
\end{figure}

Then
\[
\mathbf{G}=\mathbf{s}\mathbf{e}_1^\top, \qquad \mathbf{e}_1 = (1,0,\dots,0)^{\top}
\qquad\mathbf{s}=(0,1,\dots,1)^\top.
\]
Therefore
\[
\mathbf{H}=\mathbf{I}_N+\delta \mathbf{s}\mathbf{e}_1^\top,
\qquad
\mathbf{H}\mathbf{a}=\mathbf{a}+\delta a_1 \mathbf{s}.
\]
The first-order condition becomes
\[
\bigl(2\mathbf{I}_N+\delta(\mathbf{s}\mathbf{e}_1^\top+\mathbf{e}_1\mathbf{s}^\top)\bigr)\mathbf{p}=\mathbf{a}+\delta a_1 \mathbf{s}.
\]
Coordinatewise,
\[
2p_1+\delta\sum_{j=2}^n p_j=a_1,
\]
and, for $i\ge 2$,
\[
2p_i+\delta p_1=a_i+\delta a_1.
\]
Hence, for $i\ge 2$, we have
\[
p_i^\star=\frac{a_i+\delta a_1-\delta p_1^\star}{2},
\qquad
p_1^\star=\frac{2a_1-\delta A-(n-1)\delta^2 a_1}{4-(n-1)\delta^2},
\qquad
A:=\sum_{j=2}^n a_j.
\]
Hence
\[
p_1^\star<p_i^\star
\iff
p_1^\star<\frac{a_i+\delta a_1-\delta p_1^\star}{2}
\iff
(2+\delta)p_1^\star<a_i+\delta a_1.
\]
Substituting the explicit formula for $p_1^\star$, we obtain
\[
p_1^\star<p_i^\star
\iff
(2+\delta)\frac{2a_1-\delta A-(n-1)\delta^2 a_1}{4-(n-1)\delta^2}
<
a_i+\delta a_1,
\]
that is,
\begin{equation}\label{eq:star_column_exact_condition}
p_1^\star<p_i^\star
\iff
(2+\delta)\bigl(2a_1-\delta A-(n-1)\delta^2 a_1\bigr)
<
(a_i+\delta a_1)\bigl(4-(n-1)\delta^2\bigr).
\end{equation}
Therefore, without further assumptions on the intercepts $a_i$, the comparison between $p_1^\star$ and $p_i^\star$ is not universal.

A simple sufficient condition ensuring that the center is priced \emph{below} every leaf is
\[
a_i\ge a_1,
\qquad \forall i\ge 2,
\]
assuming also $a_j\ge 0$ for all $j$. Indeed, since $A=\sum_{j=2}^n a_j\ge 0$, we have
\[
(2+\delta)\bigl(2a_1-\delta A-(n-1)\delta^2 a_1\bigr)
\le
(2+\delta)\bigl(2a_1-(n-1)\delta^2 a_1\bigr).
\]
Moreover, if $a_i\ge a_1$, then
\[
(a_i+\delta a_1)\bigl(4-(n-1)\delta^2\bigr)
\ge
(1+\delta)a_1\bigl(4-(n-1)\delta^2\bigr).
\]
Thus it is enough to verify that
\[
(2+\delta)\bigl(2a_1-(n-1)\delta^2 a_1\bigr)
<
(1+\delta)a_1\bigl(4-(n-1)\delta^2\bigr),
\]
or equivalently,
\[
(2+\delta)\bigl(2-(n-1)\delta^2\bigr)
<
(1+\delta)\bigl(4-(n-1)\delta^2\bigr).
\]
Expanding both sides, this reduces to
\[
2\delta+(n-1)\delta^2>0,
\]
which always holds for $\delta>0$. Hence, under the sufficient condition $a_i\ge a_1$ for all $i\ge 2$, we obtain
\[
p_1^\star<p_i^\star,
\qquad \forall i\ge 2.
\]

In particular, when $a_i=a$ for all $i$, condition \eqref{eq:star_column_exact_condition} is automatically satisfied, and therefore
\[
p_1^\star<p_i^\star,
\qquad \forall i\ge 2.
\]

\newpage

\section{Finite-sample convergence of the isotonic estimates with continuous response}\label{app:finite-sample}

Our Theorem~\ref{thm:dynamic_regret_isotonic_network} relies on a key finite-sample convergence result, namely Lemma~\ref{lem:J2J3}. In turn, Lemma~\ref{lem:J2J3} follows as a direct application of the more general finite-sample isotonic convergence result stated in this section, namely Theorem~\ref{thm:Dumbgen} below.

Let $\mathcal{U}$ be a bounded interval in $\mathbb{R}$. In this section, we demonstrate the uniform convergence of the isotonic estimates $\widehat{\psi}$ of a non-decreasing function $\psi:\mathcal{U}\to \mathbb{R}$, from an i.i.d. sample $\{w_t, y_t\}_{t \in \mathcal{T}}$ where $\mathcal{T} = \{1,2,\dots,n\}$, for some finite $n$, $w_t \in \mathcal{U}$ and
\begin{equation}\label{eq:y_generate}
y_t = \psi(w_t)+\varepsilon_t
\end{equation}
where $\varepsilon_t$ is sub-gaussian with variance proxy $\sigma^2$. The isotonic estimate $\widehat{\psi}$ is defined as 
\begin{equation}\label{eq:likelihood_F}
\textstyle\widehat{\psi} \triangleq \operatorname{argmin}_{\psi \in \mathcal{S}} \sum_{t \in \mathcal{T}} (y_t-\psi(w_t))^2,
\end{equation}
where $\mathcal{S}$ is the set of non-decreasing functions in $\mathbb{R}$ \citep{robertson1988order, mosching2020monotone}.

The minimizer $\widehat{\psi}$ is a piecewise constant function with jumps at a subset of $\{w_t: t \in \mathcal{T}\}$. The order statistics on which $\widehat{\psi}$ is based are the order statistics of the values $w_t$ and the values of the corresponding $y_t$. To be more specific, let $u_1<u_2<\dots<u_m$ the different value of the observed $\{w_t\}_{t\in \mathcal{T}}$. For $j=1,\dots,m$ set 
$$
\textstyle o_j= \#\{t: w_t=u_j\},\quad \widehat{y}_j = \frac{1}{o_j}\sum_{i:w_{i}=u_j}y_i.
$$
For every $1\leq r \leq s \leq m$ let
$$
\textstyle o_{r s}\triangleq\sum_{j=r}^s o_j=\#\left\{t: u_r \leq w_t \leq u_s\right\}, \quad \widehat{y}_{rs} =
\frac{1}{o_{r s}} \sum_{j=r}^s o_j \widehat{y}_j.
$$
It is well known that $\widehat{\psi}= (\widehat{\psi}(u_1),\widehat{\psi}(u_2),\dots,\widehat{\psi}(u_m))$ may be represented by the following minimax and maximin formulae \citep{robertson1988order,dai2020bias}: for $1\leq j \leq m$
\begin{align*}
\widehat{\psi}(u_j) = \max_{r\leq j}\min_{j\leq s} \,\widehat{y}_{rs} = \min_{j\leq s}\max_{r\leq j} \,\widehat{y}_{rs}.
\end{align*}
The $\widehat{\psi}$ is also known as the isotonic regression on data $\{\left(w_t,y_t\right)\}_{t\in \mathcal{T}}$, and we will denote it as
$$
\widehat{\psi} = \text{Isotonic} \{\left(w_t,y_t\right)\}_{t\in \mathcal{T}}.
$$
We are now prepared to demonstrate the main result of this section, that is, the convergence of $\widehat{\psi}$ to $\psi$ in the supremum norm.

\begin{theorem}\label{thm:Dumbgen}
Let $\psi$ is $\alpha$-H\"older for some $\alpha \in (0,1]$: $|\psi(u)-\psi(v)|\leq C_1|u-v|^{\alpha}$ for all $u,v \in \mathcal{U}$, for some $C_1>0$. Let $w_1,w_2,\dots,w_n \in \mathcal{U}$ i.i.d. points with density $f_w$ that satisfies $\inf_{u \in \mathcal{U}} f_w (u) \geq C_2$ for some universal constant $C_2>0$, and let $y_i$ be generated as in \eqref{eq:y_generate}. Then for every $\kappa >0$ and $\gamma>2$ there exists $n_0=n_0(\gamma,\kappa,\alpha) \in \mathbb{N}$ and $C= C(C_1,\mathcal{U},\sigma,\kappa,\alpha)>0$ such that
$$
\mathbb{P}\left\{\underset{u \in \mathcal{U}_n}{\sup} |\widehat{\psi}(u)-\psi(u)| \leq C \rho_n^{\alpha /(2\alpha +1)} \right\} \geq 1-\tfrac{1}{n^{\gamma -2}},\quad n \geq n_0,
$$
where $\rho_n=\ln (n) / n$ and $\mathcal{U}_n = \{u \in \mathcal{U}:\left[u \pm \delta_n\right] \subset \mathcal{U}\}$, with $\delta_n=\kappa  \rho_n^{1 /(2\alpha +1)}$. Specifically, $n_0$ is the smallest integer $n$ that satisfies $\epsilon_n<1$, where
$$
\epsilon_n\triangleq\max \left(c_n / \delta_n, \sqrt{2 c_n / \delta_n}\right)+\left(n \delta_n\right)^{-1},\qquad c_n\triangleq \gamma \ln (n+2) /(n+1).
$$
\end{theorem}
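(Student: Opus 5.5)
The plan is to follow the classical argument of Dümbgen and Mösching for uniform consistency of isotonic regression, built on the min--max representation $\widehat{\psi}(u_j)=\max_{r\le j}\min_{s\ge j}\widehat y_{rs}$. For a fixed $u\in\mathcal U_n$, the upper deviation $\widehat\psi(u)-\psi(u)$ will be bounded by choosing, in the min--max formula, the index set $s$ corresponding to the design points in the window $[u,u+\delta_n]$. Taking the maximum over $r\le j$ and using $\widehat y_{rs}$ for blocks $[v,u+\delta_n]$ with $v\le u$ gives
\[
\widehat\psi(u)\le \max_{v\le u}\ \frac{1}{o(v,u+\delta_n)}\sum_{t:\,w_t\in[v,u+\delta_n]}\bigl(\psi(w_t)+\varepsilon_t\bigr),
\]
where $o(v,u+\delta_n)$ denotes the number of $w_t$ in $[v,u+\delta_n]$. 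Monotonicity of $\psi$ then bounds the signal part by $\psi(u+\delta_n)\le\psi(u)+C_1\delta_n^\alpha$, which is the bias term. A symmetric argument using the max--min form and the window $[u-\delta_n,u]$ handles the lower deviation.

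The stochastic part is controlled uniformly over all intervals with endpoints at design points. Conditionally on the $w_t$, each block average of the sub-Gaussian $\varepsilon_t$ satisfies $\mathbb P(|\bar\varepsilon_I|>\sigma\sqrt{2\gamma\ln(n+2)/\#I})\le 2(n+2)^{-\gamma}$. There are at most $n^2$ intervals, so a union bound gives that with probability at least $1-n^{-(\gamma-2)}$ (after adjusting constants) every block average is at most $\sigma\sqrt{2\gamma\ln(n+2)/\#I}$. Here the factor $\gamma$ and the loss of $2$ in the exponent enter, and $c_n=\gamma\ln(n+2)/(n+1)$ appears naturally. The relevant blocks all contain the window of length $\delta_n$ on the correct side of $u$, so the bound requires $\#I\gtrsim n\delta_n$. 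This is the role of the density lower bound $f_w\ge C_2$: with the same probability, every interval of length $\delta_n$ inside $\mathcal U$ contains at least $C_2 n\delta_n(1-\epsilon_n)$ points. I would obtain this from a uniform relative deviation bound for the empirical measure over intervals, for instance a Bernstein or Chernoff bound combined with the union bound over intervals spanned by order statistics. This step produces exactly the quantity $\epsilon_n=\max(c_n/\delta_n,\sqrt{2c_n/\delta_n})+(n\delta_n)^{-1}$, and the condition $\epsilon_n<1$ defines $n_0$.

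Combining the two parts, on the good event,
\[
\sup_{u\in\mathcal U_n}|\widehat\psi(u)-\psi(u)|\le C_1\delta_n^\alpha+\sigma\sqrt{\frac{2\gamma\ln(n+2)}{C_2(1-\epsilon_n)n\delta_n}}.
\]
With $\delta_n=\kappa\rho_n^{1/(2\alpha+1)}$, both terms are of order $\rho_n^{\alpha/(2\alpha+1)}$, which gives $C=C(C_1,\sigma,\kappa,\alpha,\mathcal U)$; the constant also depends on $C_2$ and $\gamma$, which I absorb. Restricting to $\mathcal U_n$ guarantees that both windows $[u,u+\delta_n]$ and $[u-\delta_n,u]$ lie in $\mathcal U$. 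The piecewise-linear interpolation used in the algorithm lies between neighbouring fitted values, so it inherits the same bound up to the bias from the spacing of design points, which is itself $O(\delta_n^\alpha)$ on the good event.

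The main obstacle is the uniform lower bound on the counts $\#I$ and matching its failure probability and threshold to the stated $\epsilon_n$ and $n^{-(\gamma-2)}$. I would first try the Dümbgen-type inequality for uniform empirical processes over intervals, applied after transforming by the distribution function of $w$. That inequality bounds the deviation of the empirical measure over intervals relative to $\max(c_n,\sqrt{2c_n\,\mathrm{length}})$, and it is the source of the exact form of $\epsilon_n$.
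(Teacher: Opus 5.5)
Your proposal is correct and follows essentially the same route as the paper: isotonicity plus the min--max formula evaluated on the window $[u,u+\delta_n]$, a uniform sub-Gaussian union bound over all design-point blocks (the paper's $M_n$ lemma, whose free exponent $D$ is exactly your ``adjusting constants'' to reach $1-n^{-(\gamma-2)}$), the M\"osching--D\"umbgen relative-deviation bound giving counts at least $C_2(1-\epsilon_n)n\delta_n$, and the H\"older bias $C_1\delta_n^{\alpha}$. Your remark that the constant actually depends on $\gamma$ and $C_2$ is accurate, and it applies equally to the paper's argument, where $D$ must be chosen relative to $\gamma$.
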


\begin{remark}
Theorem~\ref{thm:Dumbgen} is closely parallel to Theorem 3.3 in \cite{mosching2020monotone}. The main difference is that Theorem 3.3 in \cite{mosching2020monotone} is established for binary response variables, whereas our setting allows for continuous responses. In addition, while \cite{mosching2020monotone} provides a convergence rate, it does not make explicit the tail probability in the corresponding concentration inequality; by contrast, our result is stated directly in finite-sample form with an explicit high-probability bound. The proof is nonetheless strongly inspired by \cite{mosching2020monotone} and follows the same overall strategy. The proof is also close to Theorem 4.8 in \cite{bracale2025dynamic}, who work in the binary response setting, differently from our continuous case. 
\end{remark}

Theorem~\ref{thm:Dumbgen} requires two Lemmas: Lemma~\ref{lemma:Hoeffding} and Lemma~\ref{lemma:A_n}.

\begin{lemma}\label{lemma:Hoeffding}
Let \begin{align*}
\bar{\psi}_{rs} \triangleq
\frac{1}{o_{r s}} \sum_{j=r}^s o_j \psi(u_j), \qquad \text{and} \qquad M_n\triangleq\max _{1 \leq r \leq s \leq m} o_{r s}^{1 / 2}|\widehat{y}_{rs}-\bar{\psi}_{rs}|.
\end{align*}
Then for any constant $D>1$,
$$
 \mathbb{P}\left(M_n \leq2\sigma (D \ln n)^{1 / 2}\right)\leq 1-(\tfrac{n+1}{n^D})^2.
$$
\end{lemma}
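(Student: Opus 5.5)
The plan is to test the inequality exactly as worded, namely as an \emph{upper} bound $\mathbb{P}\bigl(M_n \le 2\sigma (D\ln n)^{1/2}\bigr) \le 1-\bigl(\tfrac{n+1}{n^D}\bigr)^2$. Equivalently, it is a \emph{lower} bound on the tail,
\[
\mathbb{P}\bigl(M_n > 2\sigma (D\ln n)^{1/2}\bigr) \ \ge\ \Bigl(\tfrac{n+1}{n^D}\Bigr)^2 .
\]
This is an anti-concentration statement. Sub-Gaussianity only controls tails from above, so this rephrasing decides the strategy: first check whether the hypotheses of Theorem~\ref{thm:Dumbgen} can force $M_n$ to be large with the stated probability at all.

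\textbf{Step 1 (structure of $M_n$).} Since $\widehat y_{rs}-\bar\psi_{rs}=o_{rs}^{-1}\sum_{t:\,u_r\le w_t\le u_s}\varepsilon_t$, we can write
\[
M_n=\max_{r\le s} o_{rs}^{-1/2}\Bigl|\sum_{t:\,u_r\le w_t\le u_s}\varepsilon_t\Bigr| .
\]
So $M_n$ is a functional of the noise alone, evaluated over the random index intervals. Because $w$ has a density, all $o_j=1$ almost surely. Hence $M_n$ is the maximum over the $m(m+1)/2$ contiguous windows of the sorted noise sequence of the standardized partial sums.

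\textbf{Step 2 (degenerate noise).} The sub-Gaussian class with variance proxy $\sigma^2$ contains $\varepsilon_t\equiv 0$. In that case $M_n=0$ almost surely, so the left-hand side of the statement equals $1$. For $D>1$ and $n\ge 2$, however, $1-\bigl((n+1)/n^D\bigr)^2<1$. The inequality as literally worded therefore fails under the stated hypotheses.

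\textbf{Step 3 (nondegenerate noise).} I would next check whether some nondegenerate noise law rescues the claim, taking Gaussian noise with variance exactly $\sigma^2$.
\begin{itemize}
\item Each standardized window sum is $N(0,\sigma^2)$.
\item A single window exceeds $2\sigma\sqrt{D\ln n}$ with probability about $n^{-2D}$.
\item A union bound over the roughly $n^2/2$ windows gives $\mathbb{P}(M_n>2\sigma\sqrt{D\ln n})\lesssim n^{2-2D}$, up to a $(\ln n)^{-1/2}$ factor.
\end{itemize}
The target $\bigl((n+1)/n^D\bigr)^2$ is itself of order $n^{2-2D}$. Matching it from below would need the roughly $n^2$ strongly dependent window events to be nearly disjoint, with no loss from the $(\ln n)^{-1/2}$ Mills-ratio factor.

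\textbf{Main obstacle.} That lower-bound matching in Step 3 is where I expect the argument to break down. I would attempt it with a second-moment (Paley--Zygmund) bound over a sparse family of nearly independent disjoint windows. Disjoint windows only number $O(n)$, which yields a lower bound of order $n^{1-2D}(\ln n)^{-1/2}$, short of the target by a factor of about $n\sqrt{\ln n}$.

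\textbf{Expected conclusion.} The statement as written should be presented with the Step 2 counterexample: it cannot be proved under the hypotheses of Theorem~\ref{thm:Dumbgen}. Any version that does hold needs both an explicit anti-concentration assumption on $\varepsilon_t$ and a weaker right-hand side.
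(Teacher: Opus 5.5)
\textbf{Verdict: the statement has a typo in its inequality sign, and you refuted the typo instead of proving the intended lemma.}

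Your Step 2 counterexample is valid, and the literal inequality is even more fragile than you show. Whenever $(n+1)/n^D>1$ (e.g.\ $n=1$, or $n\ge 2$ with $1<D<\ln(n+1)/\ln n$), the right-hand side is negative, so the bound fails for every noise law. Your Step 3 union bound, which carries the Mills factor $(\ln n)^{-1/2}$, already puts the Gaussian tail strictly below $\bigl(\tfrac{n+1}{n^D}\bigr)^2$ for large $n$, so the Paley--Zygmund attempt is unnecessary.

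The real problem is that the displayed sign is a misprint. The paper's proof establishes the reverse direction,
$\mathbb{P}\bigl(M_n\ge 2\sigma(D\ln n)^{1/2}\bigr)\le\bigl(\tfrac{n+1}{n^D}\bigr)^2$.
The proof of Theorem~\ref{thm:Dumbgen} then uses the lemma in the form
$\mathbb{P}\bigl(M_n\le 2\sigma(D\ln n)^{1/2}\bigr)\ge 1-\bigl(\tfrac{n+1}{n^D}\bigr)^2$.
The lemma's name and its role, controlling $\widehat y_{rs}-\bar\psi_{rs}$ uniformly over windows, both indicate a concentration bound. Your closing claim that any true version needs an anti-concentration assumption is therefore wrong. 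The intended version needs only sub-Gaussian upper tails, and your proposal never proves it.

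The missing argument is the union bound you already wrote in Step 3, run in the upper-tail direction:
\begin{enumerate}
\item Condition on the design; the paper tacitly uses that the noise is independent of the $w_t$. Then $o_{rs}(\widehat y_{rs}-\bar\psi_{rs})=\sum_{t:\,u_r\le w_t\le u_s}\varepsilon_t$ is sub-Gaussian with proxy $o_{rs}\sigma^2$.
\item Hence $\mathbb{P}\bigl(o_{rs}^{1/2}|\widehat y_{rs}-\bar\psi_{rs}|\ge\eta\bigr)\le 2e^{-\eta^2/(2\sigma^2)}$ for each of the $\binom{m+1}{2}$ windows.
\item Summing over windows and using $m(m+1)\le n(n+1)\le(n+1)^2$ gives $\mathbb{P}(M_n\ge\eta)\le(n+1)^2e^{-\eta^2/(2\sigma^2)}$.
\item With $\eta_n=2\sigma(D\ln n)^{1/2}$ the exponent equals $2D\ln n$, which yields $\bigl(\tfrac{n+1}{n^D}\bigr)^2$.
\item This bound does not depend on the design, so it also holds unconditionally.
\end{enumerate}
That is the whole of the paper's proof. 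The right response was to flag the reversed sign as a typo and prove this corrected statement, rather than stopping at the counterexample.
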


\begin{proof}
First, note that 
$$
o_j \widehat{y}_j-o_j \psi(u_j)=\sum_{i:w_i=u_j}y_i-o_j \psi(u_j)=\sum_{i:w_i=u_j} (y_i-\psi(w_i))= \sum_{i:w_i=u_j} \varepsilon_i
$$
is subgaussian with variance proxy $o_j\sigma^2$, hence, using that $o_{rs}=\sum_{j=r}^s o_j$, we have that for any $\eta>0$

$$
\mathbb{P}\left[ \left|\sum_{j=r}^s o_j \widehat{y}_j- \sum_{j=r}^s o_j \psi(u_j) \right| \geq \eta\right] \leq 2\exp\left\{-\frac{\eta^2}{2o_{rs}\sigma^2}\right\},
$$
or, replacing $\eta \leftarrow \sqrt{o_{rs}} \eta$,
$$
\mathbb{P}\left[ \sqrt{o_{rs}}\left|\widehat{y}_{rs}- \bar{\psi}_{rs} \right| \geq \eta\right] \leq 2\exp\left\{-\frac{\eta^2}{2\sigma^2}\right\}.
$$

Note that $M_n$ is the maximum of the $\binom{m+1}{2}$ quantities
$$
  o_{r s}^{1 / 2}|\widehat{y}_{rs}-\bar{\psi}_{rs} |.
$$
Consequently,
$$
\begin{aligned}
\mathbb{P}\left(M_n \geq \eta_n\right) & \leq \sum_{1 \leq r \leq s \leq m} \mathbb{P}\left(o_{r s}^{1 / 2}|\widehat{y}_{rs}-\bar{\psi}_{rs} | \geq \eta_n\right) \\
  & \leq 2\binom{m+1}{2} \exp \left(- \eta_n^2/2\sigma^2\right) \\
  &= m(m+1) \exp \left(- \eta_n^2/2\sigma^2\right) \\
  & \leq \exp \left(2 \ln (n+1)- \eta_n^2/2\sigma^2\right)\\
  & \leq \exp \left(2 \ln ((n+1)/n^D)\right) = (\tfrac{n+1}{n^D})^2,
\end{aligned}
$$
where we chose $\eta_n=2\sigma( D \ln n)^{1 / 2}$ for some $D>1$.
\end{proof}

Before proceeding with the technical Lemma~\ref{lemma:A_n}, let's define
$$
\rho_n\triangleq \frac{\ln n}{n},
$$ 
and $\lambda(\cdot)$ the Lebesgue measure, and denote by $P_n(\cdot)$ the empirical measure of the design points $w_{t}$, that means
$$
P_n(B)\triangleq\frac{1}{n}\#\left\{t \in \mathcal{T}: w_{t}\in B\right\} \quad \text { for } B \subset \mathcal{U}.
$$

\begin{lemma}\label{lemma:A_n}
Let $w_1,w_2,\dots,w_n$ i.i.d. points with density $f_w$ that satisfies $\inf_{u \in \mathcal{U}} f_w (u) \geq C_2$ for some universal constant $C_2>0$, then for a given constant $\kappa >0$, and for any $\gamma >2$, there exists $n_0=n_0(\gamma,\kappa,\alpha) \in \mathbb{N}$ and a sequence $\epsilon_n = \epsilon_n(\gamma,\kappa,\alpha)>0$, $\epsilon_n \rightarrow 0$ such that
$$
\mathbb{P}\left(A_{n,\gamma}\right) > 1-\frac{1}{2(n+2)^{\gamma -2}}, \quad n \geq n_0
$$
where $A_{n,\gamma}$ is the event
$$
  \inf\left\{\frac{P_n\left(\mathcal{U}_n\right)}{\lambda\left(\mathcal{U}_n\right)}:\mathcal{U}_n \subset \mathcal{U}, \lambda\left(\mathcal{U}_n\right) \geq \delta_n\triangleq \kappa  \rho_n^{1 /(2\alpha +1)} \right\}\geq C_2(1-\epsilon_n).
$$
\end{lemma}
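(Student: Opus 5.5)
The plan is to reduce the uniform lower bound over all intervals of length at least $\delta_n$ to a finite family of intervals with endpoints on a grid. Uniform concentration will come from a Bernstein/Chernoff-type inequality, and a union bound will handle the finite family. Throughout, we may restrict to intervals $I\subset\mathcal U$: for any measurable $B$ with $\lambda(B)\ge\delta_n$ the infimum is attained (or approached) on intervals, and the statement is interpreted for intervals $\mathcal U_n$, as in \cite{mosching2020monotone}. Since $\inf f_w\ge C_2$, every interval $I$ satisfies $P(I):=\mathbb P(w_1\in I)\ge C_2\lambda(I)$. It therefore suffices to show that, with high probability, $P_n(I)\ge P(I)(1-\epsilon_n)$ holds simultaneously for all $I$ with $\lambda(I)\ge\delta_n$, up to a discretization error.

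\textbf{Step 1 (discretization).} I will pass to probability-quantile coordinates, $I\mapsto F_w(I)$ with $F_w$ the distribution function of $w_1$. This turns the problem into a uniform one, and intervals of length $\ge\delta_n$ become intervals of $P$-mass at least $C_2\delta_n$. On this scale I take the grid $\{k/(n+1)\}_{k=0}^{n+1}$. Every such interval contains a grid interval $J$ with $P(J)\ge P(I)-2/(n+1)$, and clearly $P_n(I)\ge P_n(J)$. The relative loss is $O((n\delta_n)^{-1})$, which produces the term $(n\delta_n)^{-1}$ in $\epsilon_n$. There are at most $\binom{n+2}{2}\le (n+2)^2/2$ grid intervals.

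\textbf{Step 2 (concentration).} For a fixed grid interval $J$ with $p=P(J)$, the count $nP_n(J)\sim\mathrm{Bin}(n,p)$. The lower-tail Chernoff bound gives $\mathbb P(P_n(J)\le p(1-\eta))\le\exp(-np\eta^2/2)$. I will also use the sharper Bennett/Poisson-type form, which yields deviations of order $\max(c/p,\sqrt{2c/p})$ with $c=\gamma\ln(n+2)/(n+1)$; this matches the shape of the given $\epsilon_n$. Choosing $\eta$ so that each failure probability is at most $(n+2)^{-\gamma}$ and applying the union bound over the $(n+2)^2/2$ grid intervals gives total failure probability at most $\frac{1}{2}(n+2)^{2-\gamma}$, which is exactly the stated bound. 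For $p\gtrsim\delta_n$ this choice of $\eta$ is of order $\max(c_n/\delta_n,\sqrt{2c_n/\delta_n})$.

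\textbf{Step 3 (assembling).} Combining the two steps, on the good event every admissible $I$ satisfies
\[
P_n(I)/\lambda(I)\ge C_2\bigl(1-\max(c_n/\delta_n,\sqrt{2c_n/\delta_n})-(n\delta_n)^{-1}\bigr)=C_2(1-\epsilon_n).
\]
Because $\delta_n=\kappa(\ln n/n)^{1/(2\alpha+1)}$ decays slower than $\ln n/n$, we get $c_n/\delta_n\to0$ and $n\delta_n\to\infty$, so $\epsilon_n\to0$; $n_0$ is the first $n$ with $\epsilon_n<1$.

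The main obstacle is the bookkeeping in Step 2. The relative deviation must be controlled uniformly in $p$ over the range $[C_2\delta_n,1]$, rather than only at $p=\delta_n$. Here I would first rely on the monotonicity of the bound in $p$, so the worst case is the smallest mass. I would also need to check that the constant $C_2$ either cancels by working with ratios to $P(I)$ or is absorbed into the definition of $\epsilon_n$.
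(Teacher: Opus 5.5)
Your argument is correct, but it takes a different route from the paper. The paper proves nothing itself here. It imports the ratio bound $\inf\{P_n(I)/P(I): P(I)\ge\delta_n\}\ge 1-\epsilon_n$ as a black box from \citet[Section 4.3]{mosching2020monotone}, and leaves implicit the passage from $P$ to $\lambda$ via $f_w\ge C_2$. You instead rebuild that ratio bound from first principles:
\begin{itemize}
\item a quantile transform to uniforms;
\item a deterministic grid $\{k/(n+1)\}_{k=0}^{n+1}$;
\item a multiplicative Chernoff lower-tail bound for each of the $\binom{n+2}{2}$ binomial grid counts;
\item a union bound.
\end{itemize}
This does give failure probability at most $\binom{n+2}{2}(n+2)^{-\gamma}<\tfrac12(n+2)^{2-\gamma}$. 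Monotonicity of the Chernoff deviation in $p$ settles the uniformity you worry about, and since only a one-sided bound is needed, the Bennett/Poisson refinement is unnecessary. The citation buys the exact explicit $\epsilon_n$ and $n_0$ that Theorem~\ref{thm:Dumbgen} later uses. Your route buys a self-contained proof, and it makes visible a point the paper glosses over.

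That point is the dependence on $C_2$. Since $\lambda(I)\ge\delta_n$ only gives $P(I)\ge C_2\delta_n$, your steps actually deliver
$\tilde\epsilon_n=\sqrt{2\gamma\ln(n+2)/\bigl(n(C_2\delta_n-2/(n+1))\bigr)}+2/\bigl((n+1)C_2\delta_n\bigr)$.
So the specific bound in your Step~3, written with the paper's $\epsilon_n$, is not what your argument yields; it should read $P_n(I)/\lambda(I)\ge C_2(1-\tilde\epsilon_n)$.

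This is harmless for the lemma, which only asserts some $\epsilon_n\to0$. The $C_2$-dependence is also unavoidable. For fixed $n$, take $f_w$ uniform on a long interval $\mathcal U$: then $C_2=1/\lambda(\mathcal U)$ is tiny, and some window of length $\delta_n$ is empty with high probability. The paper's citation hides the same issue, because the cited result needs $P(\mathcal U_n)\ge\delta_n$, so $\delta_n$ there should be replaced by $C_2\delta_n$.

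Finally, drop the claim that for measurable $B$ the infimum is approached on intervals. It is false: $B=\mathcal U\setminus\{w_1,\dots,w_n\}$ has $P_n(B)=0$ and $\lambda(B)=\lambda(\mathcal U)$. The lemma holds only with $\mathcal U_n$ ranging over intervals. That is the interpretation you adopt anyway, and the only one used in the proof of Theorem~\ref{thm:Dumbgen}.
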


\begin{proof}
This result appears in \cite{bracale2025dynamic}; we provide a proof here for completeness. The proof is a direct application of the more general result by \citet[Section 4.3]{mosching2020monotone} which can be stated as follows: let $\delta_n>0$ such that $\delta_n \rightarrow 0$ while $n\delta_n /\ln(n) \rightarrow \infty$ (as $n\rightarrow \infty$). Then for every $\gamma>2$, there exists $n_0 = n_0 (\gamma, \delta_n)$ and $\epsilon_n = \epsilon_n(\gamma,\delta_n)>0$, $\epsilon_n \rightarrow 0$ such that 

$$  \mathbb{P}\left(\inf\left\{\frac{P_n\left(\mathcal{U}_n\right)}{P\left(\mathcal{U}_n\right)}:\mathcal{U}_n \subset \mathcal{U}, P\left(\mathcal{U}_n\right) \geq \delta_n \right\}\geq 1-\epsilon_n\right) > 1-\frac{1}{2(n+2)^{\gamma -2}}, \quad n \geq n_0,
$$
where $P(\cdot)$ is the probability measure of the design points $w_t$, that is
$$
P(B)\triangleq\int_B f_w(w)dw, \quad \text { for } B \subset \mathcal{U},
$$
and
$$
\epsilon_n\triangleq\max \left(c_n / \delta_n, \sqrt{2 c_n / \delta_n}\right)+\left(n \delta_n\right)^{-1} \rightarrow 0,
$$
where $c_n\triangleq \gamma \ln (n+2) /(n+1)$. The value $n_0$ is the smallest integer $n$ that satisfies $\epsilon_n<1$.
\end{proof}

Now we prove Theorem~\ref{thm:Dumbgen}.

\begin{proof}[Proof of Theorem~\ref{thm:Dumbgen}]
Let $n$ be sufficiently large so that $\mathcal{U}_n \neq \emptyset$ and such that the event $A_{n,\gamma}$ in Lemma~\ref{lemma:A_n} occurs. Since $f_w$ is the uniform distribution, the value $C_2$ defined in Lemma~\ref{lemma:A_n} corresponds to $1/|\mathcal{U}|$. For $u \in \mathcal{U}_n$ the indices
\begin{align*}
  r(u) & \triangleq\min \left\{j \in\{1, \ldots, m\}: u_j \geq u\right\}, \\
  j(u) & \triangleq\max \left\{j \in\{1, \ldots, m\}: u_j \leq u+\delta\right\},
\end{align*}
are well-defined, because $\left[u, u+\delta_n\right]$ is a subinterval of $I$ of length $\delta_n$. Note that by Lemma~\ref{lemma:A_n} this interval contains at least one observation $u_j$. Moreover,
\begin{align*}
& r(u) \leq j(u),\\
& u \leq u_{r(u)} \leq u_{j(u)}\leq u+\delta_n, \\
& o_{r(u) j(u)}=o_n\left(\left[u, u+\delta_n\right]\right) \geq C_2(1-\epsilon_n) n \delta_n,
\end{align*}
where $\epsilon_n$ is defined as in Lemma~\ref{lemma:A_n}. Consequently, with $M_n$ as in Lemma~\ref{lemma:Hoeffding}, we have 
\begin{align*}
\widehat{\psi}(u)-\psi(u)& \leq \widehat{\psi}(u_{r(u)})-\psi(u) \\
& =\min _{s\geq r(u)} \max _{r \leq r(u)} \widehat{y}_{r s}-\psi(u) \\
& \leq \max _{r \leq r(u)} \widehat{y}_{r j(u)}-\psi(u) \\
& \leq o_{r(u) j(u)}^{-1 / 2} M_n+\max _{r \leq r(u)} \bar{\psi}_{rj(u)}-\psi(u) \\
& \leq\left(C_2(1-\epsilon_n) n \delta_n\right)^{-1 / 2} M_n+\psi(u_{j(u)})-\psi(u) \\
& \leq\left(C_2(1-\epsilon_n) n \delta_n\right)^{-1 / 2} M_n+C_1 \delta_n^{\alpha}.
\end{align*}

In the first step, we used isotonicity of $u \mapsto \widehat{\psi}(u)$, and in the second last step we used isotonicity of $u \mapsto \psi(u)$, and the last step utilizes the H\"olderianity assumption. But on the event $\left\{M_n \leq 2\sigma (D \ln n)^{1 / 2}\right\}$, the previous considerations implies that
\begin{align*}
\underset{u \in \mathcal{U}_n}{\sup}(\widehat{\psi}(u)-\psi(u)) & \leq2\sigma\left(C_2(1-\epsilon_n) n \delta_n\right)^{-1 / 2}(D \ln n)^{1 / 2}+C_1 \delta_n^{\alpha}=C \rho_n^{\alpha /(2\alpha +1)},
\end{align*}
where $C=2\sigma \sqrt{\kappa D/C_2} + C_1\kappa^{\alpha}$, and $D$ is any real value strictly greater than $1$. But $\underset{u \in \mathcal{U}_n}{\sup}(\psi(u)-\widehat{\psi}(u))\leq C \rho_n^{\alpha /(2\alpha +1)}$ happens in $A_{n,\gamma} \cap \{M_n \leq 2 \sigma (D \ln n)^{1 / 2}\}$ which has probability
\begin{align*}
\mathbb{P}(A_{n,\gamma} \cap \{M_n \leq 2 \sigma(D \ln n)^{1 / 2}\}) &= 1-\mathbb{P}(A_{n,\gamma}^c \cup \{M_n \geq 2 \sigma(D \ln n)^{1 / 2}\}) \\
&\geq 1-\mathbb{P}(A_{n,\gamma}^c)-\mathbb{P}( M_n \geq 2 \sigma(D \ln n)^{1 / 2})\\
&= \mathbb{P}(A_{n,\gamma})+\mathbb{P}( M_n \leq 2 \sigma (D \ln n)^{1 / 2})-1\\
&\geq 1-\frac{1}{2(n+2)^{\gamma -2}}-\left(\frac{n+1}{n^D}\right)^2\\
&\geq 1-\frac{1}{(n+2)^{\gamma -2}} \geq 1-\frac{1}{n^{\gamma -2}},
\end{align*}
where we used that by Lemma~\ref{lemma:Hoeffding}, for any fixed $D>1$ we have $\mathbb{P}\left(M_n \leq 2 \sigma (D \ln n)^{1 / 2}\right) \geq 1-(\tfrac{n+1}{n^D})^2$ and by Lemma~\ref{lemma:A_n} for any $\gamma >2$ we have $\mathbb{P}\left(A_{n,\gamma}\right) > 1-\frac{1}{2(n+2)^{\gamma -2}}$. The last two inequalities come from choosing $\gamma=D$ sufficiently large. 

Analogously one can show that on $\left\{M_n \leq 2 \sigma(D \ln n)^{1 / 2}\right\}$,
\begin{align*}
\underset{u \in \mathcal{U}_n}{\sup}(\psi(u)-\widehat{\psi}(u))&\leq 2 \sigma\left(n \delta_n\right)^{-1 / 2}(D \ln n)^{1 / 2}+C_1 \delta_n^{\alpha}=C \rho_n^{\alpha /(2\alpha +1)},
\end{align*}
with the same constant $C$ and with the same probability tail.
\end{proof}

\subsection{Proof of Lemma~\ref{lem:J2J3}}\label{proof:lemma-convergence}
Lemma~\ref{lem:J2J3} follows directly from the general convergence result in
Theorem~\ref{thm:Dumbgen}. The correspondence between the notation in
Theorem~\ref{thm:Dumbgen} and the present setting is summarized in
Table~\ref{tab:notation_replacement_J2J3}.

\begin{table}[ht]
\centering
\caption{Notation correspondence used to apply Theorem~\ref{thm:Dumbgen} to Lemma~\ref{lem:J2J3}.}
\label{tab:notation_replacement_J2J3}
\begin{tabular}{c c}
\toprule
\textbf{Notation in Theorem~\ref{thm:Dumbgen}} 
& \textbf{Notation in Lemma~\ref{lem:J2J3}} \\
\midrule
$n$ & $T_0$ \\
$\psi$ & $\psi_i$ \\
$\mathcal U$ & $\mathcal X_i$ \\
$y_t$ & $y_i^t$ \\
$w_t$ & $x_i^t$ \\
$\sigma^2$ & $\sigma_i^2$ \\
$C_1$ & $L_i$ \\
\bottomrule
\end{tabular}
\end{table}
We only need to prove that the densities of the equilibrium coordinates $x_i^{t}$ of the vector $\mathbf{x}^t \in \mathcal{X}$ (which are samples as in \eqref{eq:coordinate_regression_isotonic}), are bounded away from zero on the corresponding domain $\mathcal{X}_i$. 

From \eqref{eq:coordinate_regression_isotonic} we know that $\mathbf{x}^t$ is such that 
$$
\mathbf{F}(\mathbf x^t)
=
-\mathbf B\mathbf p^t+\boldsymbol{\xi}^t, \quad \Leftrightarrow \quad y_i^t
=
\psi_i(x_i^t)+\xi_i^t, \quad \forall i\in\mathcal N,
$$
where, by Assumption~\ref{ass:strong-concavity-estim}, the map $\mathbf F(\mathbf x):=\boldsymbol{\psi}(\mathbf x)-\delta \mathbf G\mathbf x$ is $m$-strongly monotone and implies the variational condition. Recall moreover that $\mathbf{p}^t$, during the exploration phase, are sampled i.i.d from $\operatorname{Unif}(\mathcal{P})$, and the errors $\boldsymbol{\xi}^t$ are sampled from a $N$-dimensional distribution with independent coordinates, and each coordinate is a (zero mean) subgaussian with variance proxies $\sigma_i^2$ for all $i \in \mathcal{N}$. Let 
$$
\mathbf{z}^t = -\mathbf{B} \mathbf{p}^t + \boldsymbol{\xi}^t,
$$
where $\mathbf{p}^t$ and $\boldsymbol{\xi}^t$ are clearly independent. By strong monotonicity of $\mathbf{F}$, we can write 
$$
\mathbf{x}^t = \mathbf{F}^{-1}(\mathbf{z}^t), \qquad \text{or} \qquad \mathbf{X} = \mathbf{F}^{-1}(\mathbf{Z}),
$$
where $\mathbf{X}\sim \mathbf{x}^t$ and $\mathbf{Z}\sim \mathbf{z}^t$. First, $\mathbf{Z}$ has an everywhere positive density. Indeed, assuming $\mathbf{p}$ has density $f_{\mathbf{P}}$,

$$
f_{\mathbf{Z}}(\mathbf{z})=\int_{\mathcal{P}} f_{\boldsymbol{\xi}}(\mathbf{z}+\mathbf{B} \mathbf{p}) f_{\mathbf{P}}(\mathbf{p}) d \mathbf{p} .
$$
Since $f_{\boldsymbol{\xi}}>0$ on $\mathbb{R}^N$, $f_{\boldsymbol{\xi}}(\mathbf{z}+\mathbf{B} \mathbf{p})>0$ for every $\mathbf{z} \in \mathbb{R}^N$ and every $\mathbf{p} \in \mathcal{P}$. Hence, we get
$$
f_{\mathbf{Z}}(\mathbf{z})>0 \quad \forall \mathbf{z} \in \mathbb{R}^N .
$$
Now, since
$$
\mathbf{F}: \mathbb{R}^N \rightarrow \mathbb{R}^N
$$
is continuously differentiable and $m$-strongly monotone, then it is a bijection, so $\mathbf{F}^{-1}$ is well-defined on all of $\mathbb{R}^N$. Moreover, $\nabla F(x)$ is nonsingular everywhere. In fact, strong monotonicity implies
$$
\mathbf{v}^{\top} \nabla \mathbf{F}(\mathbf{x}) \mathbf{v} \geq m\|\mathbf{v}\|^2, \qquad \forall \mathbf{v} \neq \mathbf{0},
$$
so
$\|\nabla \mathbf{F}(\mathbf{x}) \mathbf{v}\| \geq m\|\mathbf{v}\|$ and therefore
$$
|\operatorname{det} \nabla \mathbf{F}(\mathbf{x})| \geq m^N>0 .
$$
By the multivariate change-of-variables formula,
$$
f_{\mathbf{X}}(\mathbf{x})=f_{\mathbf{Z}}(\mathbf{F}(\mathbf{x}))|\operatorname{det} \nabla \mathbf{F}(\mathbf{x})| .
$$
Since $f_{\mathbf{Z}}(\mathbf{F}(\mathbf{x}))>0$ and $|\operatorname{det} \nabla \mathbf{F}(\mathbf{x})|>0$, we get
$$
f_{\mathbf{X}}(\mathbf{x})>0 \quad \forall \mathbf{x} \in \mathbb{R}^N. 
$$
 More explicitly, $f_{\mathbf{X}}(\mathbf{x}) \geq m^D f_{\mathbf{Z}}(\mathbf{F}(\mathbf{x}))>0$. Hence in $\mathcal{X}$, which is compact, $f_{\mathbf{X}}$ is bounded away from zero. Consequently, each marginal density $f_{X_i}$ is also bounded away from zero in $\mathcal{X}_i$. This completes the proof.

\newpage

\section{Computational Complexity for Algorithm~\ref{alg:isotonic_plugin_pricing}}\label{app:comp_comp}

We briefly discuss the computational cost of Algorithm~\ref{alg:isotonic_plugin_pricing}. Let $T_0$ denote the exploration horizon and $N$ the number of consumers.

\textbf{Step 1: Construction of the responses $y_i^t$.}
At each exploration round $t$, the seller computes
\[
y_i^t:=-b_i p_i^t+\delta(\mathbf G\mathbf x^t)_i,
\qquad i\in\mathcal N.
\]
This requires one matrix-vector multiplication $\mathbf G\mathbf x^t$. Hence the cost per round is
\[
O(\mathrm{nnz}(\mathbf G)),
\]
where $\mathrm{nnz}(\mathbf G)$ is the number of nonzero entries of $\mathbf G$. In particular, this is $O(N^2)$ for a dense network and can be much smaller when the network is sparse. Over the entire exploration phase, the total cost of building the regression responses is therefore
\[
O \bigl(T_0\,\mathrm{nnz}(\mathbf G)\bigr).
\]

\textbf{Step 2: Isotonic estimation of the coordinate-wise nonlinearities.}
For each $i\in\mathcal N$, the sample $\{(x_i^t,y_i^t)\}_{t=1}^{T_0}$ is first reordered according to the covariates $x_i^t$, and then the isotonic least-squares estimator is computed by the pool-adjacent-violators algorithm (PAVA). Sorting costs $O(T_0\ln T_0)$, while PAVA itself is linear in the sample size, i.e. $O(T_0)$ \citep{grotzinger1984projections, tibshirani2011nearly}. Hence the cost per coordinate is
\[
O(T_0\ln T_0),
\]
and the total cost over all $N$ coordinates is
\[
O(NT_0\ln T_0).
\]
If the covariates are already stored in sorted order, this reduces to $O(NT_0)$.

\textbf{Step 3: Construction of the plug-in price map in the consumption space.}
Rather than solving the nonlinear equilibrium equation
\[
\widehat{\boldsymbol{\psi}}(\mathbf x)-\delta \mathbf G\mathbf x=-\mathbf B\mathbf p
\]
for each candidate price vector $\mathbf p$, it is computationally more convenient to optimize directly in the consumption space. To this end, define the plug-in price map
\[
\widehat{\mathbf Q}(\mathbf z)
:=
-\mathbf B^{-1}\bigl(\widehat{\boldsymbol{\psi}}(\mathbf z)-\delta \mathbf G\mathbf z\bigr),
\qquad \mathbf z\in\mathcal X.
\]
Evaluating $\widehat{\mathbf Q}(\mathbf z)$ requires:
\begin{itemize}[ leftmargin=1.2em]
    \item one evaluation of $\widehat{\boldsymbol{\psi}}(\mathbf z)$, which is $O(N)$ since the estimator is coordinate-wise;
    \item one matrix-vector multiplication $\mathbf G\mathbf z$, which costs
    \[
    O(\mathrm{nnz}(\mathbf G)).
    \]
\end{itemize}
Hence one evaluation of the plug-in price map $\widehat{\mathbf Q}(\mathbf z)$ costs
\[
O \bigl(N+\mathrm{nnz}(\mathbf G)\bigr).
\]
In particular, for dense networks this is $O(N^2)$, whereas for sparse networks it is nearly linear in the number of edges.

\textbf{Step 4: Optimization in the consumption space.}
Using the plug-in price map $\widehat{\mathbf Q}$, the seller's optimization problem can be rewritten as
\[
\widehat{\mathbf z}\in\arg\max_{\mathbf z\in\mathcal Z_{\mathcal P}}
\widehat{\widetilde r}(\mathbf z),
\qquad
\widehat{\widetilde r}(\mathbf z):=\mathbf z^\top \widehat{\mathbf Q}(\mathbf z),
\]
where
\[
\mathcal Z_{\mathcal P}:=\{\mathbf z\in\mathcal X:\widehat{\mathbf Q}(\mathbf z)\in\mathcal P\}.
\]
Once $\widehat{\mathbf z}$ is obtained, the corresponding plug-in price is
\[
\widehat{\mathbf p}:=\widehat{\mathbf Q}(\widehat{\mathbf z}).
\]

The complexity of this step depends on the outer optimization routine. Typical examples are the following.

\begin{enumerate}[ leftmargin=1.2em]
    \item \textbf{Projected first-order methods.}
    If one uses a projected optimization scheme in the consumption space, then each iteration requires:
    \begin{itemize}[ leftmargin=1.2em]
        \item one evaluation of $\widehat{\mathbf Q}(\mathbf z)$, costing $O(N+\mathrm{nnz}(\mathbf G))$;
        \item one projection onto the feasible set $\mathcal Z_{\mathcal P}$, whose cost we denote by
        \[
        C_{\mathrm{proj}}.
        \]
    \end{itemize}
    Hence, if $K_{\mathrm{opt}}$ projected iterations are performed, the total cost is
    \[
    O \bigl(K_{\mathrm{opt}}(N+\mathrm{nnz}(\mathbf G)+C_{\mathrm{proj}})\bigr).
    \]
    In particular, $C_{\mathrm{proj}}$ depends on the geometry of $\mathcal Z_{\mathcal P}$. If the feasible set has a simple form, such as a box or an Euclidean ball, then $C_{\mathrm{proj}}=O(N)$; for more general nonlinear feasible sets, the projection step may itself require solving an auxiliary optimization problem.

    \item \textbf{Derivative-free optimization.}
    If one prefers not to use a projected first-order method, then $\widehat{\widetilde r}$ may be optimized with derivative-free routines such as coordinate search, pattern search, or Nelder--Mead. If $K_{\mathrm{obj}}$ objective evaluations are required, then the total cost is
    \[
    O \bigl(K_{\mathrm{obj}}(N+\mathrm{nnz}(\mathbf G))\bigr),
    \]
    up to the cost of checking the feasibility condition $\widehat{\mathbf Q}(\mathbf z)\in\mathcal P$.

    \item \textbf{Grid search / exhaustive search.}
    If $\mathcal X$ is low-dimensional or discretized, one may evaluate $\widehat{\widetilde r}$ on a grid. If $M$ grid points are used, then the total cost is
    \[
    O \bigl(M(N+\mathrm{nnz}(\mathbf G))\bigr).
    \]
    This is only practical in very low dimension, since $M$ grows exponentially with $N$ for a full Cartesian grid.
\end{enumerate}

\textbf{Overall complexity.}
Putting together all the steps, the total complexity of the procedure becomes
\[
\underbrace{O \bigl(T_0\,\mathrm{nnz}(\mathbf G)\bigr)}_{\text{Step 1: construction of the responses } y_i^t}
\;+\;
\underbrace{O \bigl(NT_0\ln T_0\bigr)}_{\text{Step 2: isotonic estimation of } \widehat{\boldsymbol{\psi}}}
\;+\;
\underbrace{O \bigl(K_{\mathrm{opt}}(N+\mathrm{nnz}(\mathbf G)+C_{\mathrm{proj}})\bigr)}_{\text{Steps 3--4: evaluation of } \widehat{\mathbf Q}(\mathbf z)\text{ and outer optimization in } \mathbf z\text{-space}},
\]
where $K_{\mathrm{opt}}$ denotes the number of outer optimization iterations or objective evaluations, and $C_{\mathrm{proj}}$ is the cost of projection onto the feasible set $\mathcal Z_{\mathcal P}$ when a projected method is used.

\newpage

\section{Experiments}\label{app:experiments}\label{sec:simulation_setup}

We evaluate the finite-sample performance of Algorithm~\ref{alg:isotonic_plugin_pricing} on synthetic network markets. In line with Remark~\ref{rem:low-dim-N}, we focus on a low-dimensional regime with $N=20$ consumers and horizons
\[
T\in\{25,50,75,100,125\}.
\]
For each value of $T$, we run $10$ independent repetitions and report $95\%$ confidence intervals. For any posted price vector $\mathbf p$, the consumption profile is defined as the unique equilibrium of
\[
\mathbf x
=
\boldsymbol{\phi}^{-1}\!\bigl(\mathbf a-\mathbf B\mathbf p+\delta \mathbf G\mathbf x\bigr),
\]
and the seller's revenue is
\[
r(\mathbf p)=\mathbf p^\top \mathbf x(\mathbf p).
\]

We consider three utility specifications, all with a common function $h_i=h$ across consumers:
\begin{itemize}[leftmargin=1.5em,itemsep=0.2em,topsep=0.2em]
    \item \textbf{Linear--Quadratic:} $h(x)=\frac12 x^2$, so $\phi(x)=x$.
    \item \textbf{Power:} $h(x)=\frac{1}{\gamma+1}x^{\gamma+1}$ with $\gamma\in\{0.2,0.4,0.6,0.8\}$.
    \item \textbf{Discrete-choice / logit:} $h(x)=x\log x+(1-x)\log(1-x)$, so $\phi(x)=\log\!\bigl(\frac{x}{1-x}\bigr)$.
\end{itemize}

Model parameters are chosen as follows. In the linear--quadratic case, we set $\delta=0.5$, $\mathbf a$ linearly spaced in $[3.2,3.8]$, $\mathbf b=(b_1,b_2,\dots,b_N)$ linearly spaced in $[0.8,1.0]$, and $\mathcal X=[0,4]^N$. In the logistic case, we set $\delta=0.3$, $\mathbf a$ linearly spaced in $[1.0,2.0]$, $\mathbf b$ linearly spaced in $[1.5,2.0]$, and $\mathcal X=[0,1]^N$. In the power case, we set $\delta=0.15$, $\mathbf a$ linearly spaced in $[0.6,1.0]$, $\mathbf b$ linearly spaced in $[0.8,1.2]$, and $\mathcal X=[10^{-6},1]^N$. In all cases, the operative price domain $\mathcal P=[\mathbf p_{\mathrm{low}},\mathbf p_{\mathrm{high}}]$ is not fixed ex ante, but is constructed from the local theory-consistent box induced by the map
\[
Q(\mathbf x)=-\mathbf B^{-1}F(\mathbf x),
\qquad
F(\mathbf x)=\boldsymbol{\psi}(\mathbf x)-\delta \mathbf G\mathbf x,
\]
and clipped to ensure nonnegative prices.

The oracle benchmark is computed by optimizing directly over the interior consumption space. More precisely, we solve
\[
\mathbf x^\star\in\arg\max_{\mathbf x\in\mathring{\mathcal X}} Q(\mathbf x)^\top \mathbf x,
\qquad
\mathbf p^\star=Q(\mathbf x^\star).
\]
This guarantees consistency with the structural equilibrium mapping and avoids infeasible benchmark prices.

For each run, the exploration length is
\[
T_0=\lceil cT^\beta\rceil,
\qquad c=1,\quad \beta=0.75.
\]
During exploration, we sample $\mathbf x_t$ uniformly from the interior of $\mathcal X$, set $\mathbf p_t=Q(\mathbf x_t)$, and observe
\[
\widetilde{\mathbf x}_t=\Pi_{\mathcal X}\bigl(\mathbf x_t+\boldsymbol{\varepsilon}_t\bigr),
\]
where the noise is Gaussian with standard deviation $0.05$ in the linear--quadratic and logistic models, and $0.03$ in the power model. The regression targets are
\[
Y_{t,i} = a_i-b_ip_{t,i}+\delta(\mathbf G\widetilde{\mathbf x}_t)_i,
\]
which provide noisy observations of the common structural map.

We estimate a single monotone function $\widehat{\psi}$ by pooling all pairs $(\widetilde x_{t,i},Y_{t,i})$ across consumers and time, and fitting an isotonic least-squares estimator. The plug-in policy is then obtained by solving
\[
\widehat{\mathbf p}\in\arg\max_{\mathbf p\in\mathcal P}\mathbf p^\top \widehat{\mathbf x}(\mathbf p),
\]
where $\widehat{\mathbf x}(\mathbf p)$ is defined through the plug-in equilibrium equation
\[
\widehat{\psi}(x_i)\approx a_i-b_ip_i+\delta(\mathbf G\mathbf x)_i,
\qquad i=1,\dots,N.
\]
Both the oracle and the plug-in optimizer are computed numerically via derivative-free box-constrained optimization.

For each repetition, we record the cumulative regret
\[
R(T)=\sum_{t=1}^T \bigl(r(\mathbf p^\star)-r(\mathbf p_t)\bigr).
\]
We summarize the results by plotting mean regret and mean estimation errors against $T$ on a log--log scale, together with the corresponding empirical slopes.

\textbf{Network structures.} We run the algorithm under several choices of the interaction matrix $\mathbf G$, in order to test the finite-sample performance of the plug-in pricing rule under qualitatively different patterns of network dependence. In particular, we consider sparse local interactions, influencer-driven topologies, and two benchmark dense structures.

\begin{enumerate}[leftmargin=1.5em]
\item \textbf{Sparse circular $\mathbf G$.} We generate $\mathbf G$ as a sparse directed circular network. Specifically, for each $i$ we set
\[
g_{i,(i+1)\pmod N}=w,\qquad
g_{i,(i+2)\pmod N}=\frac{w}{2},\qquad
g_{ii}=0,
\]
with $w=0.08$, and all remaining entries equal to zero. Hence, each node is influenced by its first successor on the circle and, more weakly, by its second successor. To introduce signed interactions, we flip the sign of $10\%$ of the nonzero entries. This yields a sparse directed network with an underlying circular topology and a few negative spillovers. Figure~\ref{fig:tau_2} illustrates the convergence of the cumulative regret. In Figure~\ref{fig:different_exploration} we show the convergence of the isotonic regressions for $T \in \mathcal{T}$ in each utility model specified, demonstrating the fast convergence of this estimator.

\item \textbf{$\mathbf G$ with an influencer.}
We also consider a network with the presence of an influencer, indexed by $i=0$. This node is characterized by a null $i$-th row of $\mathbf G$ and large positive entries in the $i$-th column, so that it affects the rest of the network while remaining unaffected by others. Figure~\ref{fig:influencer_prices} displays the resulting optimal price distribution. Consistently with the theory in Section~\ref{sec:influencers}, the influencer receives a substantially lower optimal price than the rest of the network, in some cases close to zero.

\item \textbf{Null network $\mathbf G=\mathbf O_N$.}
As a baseline benchmark, we also consider the case of a null interaction matrix, $\mathbf G=\mathbf O_N$. In this setting, consumers do not exert any cross-effects on one another, so the equilibrium problem becomes fully decoupled across coordinates. This topology serves as a useful reference point, isolating the role of nonlinear utility curvature from the contribution of network spillovers. Figure~\ref{fig:regret-G-uniform} illustrates the convergence of the cumulative regret.

\item \textbf{Uniform network $\mathbf G=\mathbf 1\mathbf 1^\top-\mathbf I_N$.}
Finally, we consider the dense benchmark
\[
\mathbf G=\mathbf 1\mathbf 1^\top-\mathbf I_N,
\]
where every consumer interacts equally with every other consumer. In this case, all off-diagonal entries are equal to one and diagonal entries are set to zero. This topology provides the opposite extreme of the sparse circular design, allowing us to assess the algorithm under pervasive and homogeneous interaction effects. Figure~\ref{fig:regret-G-uniform} illustrates the convergence of the cumulative regret.
\end{enumerate}

\begin{figure}
    \centering
\includegraphics[width=\linewidth]{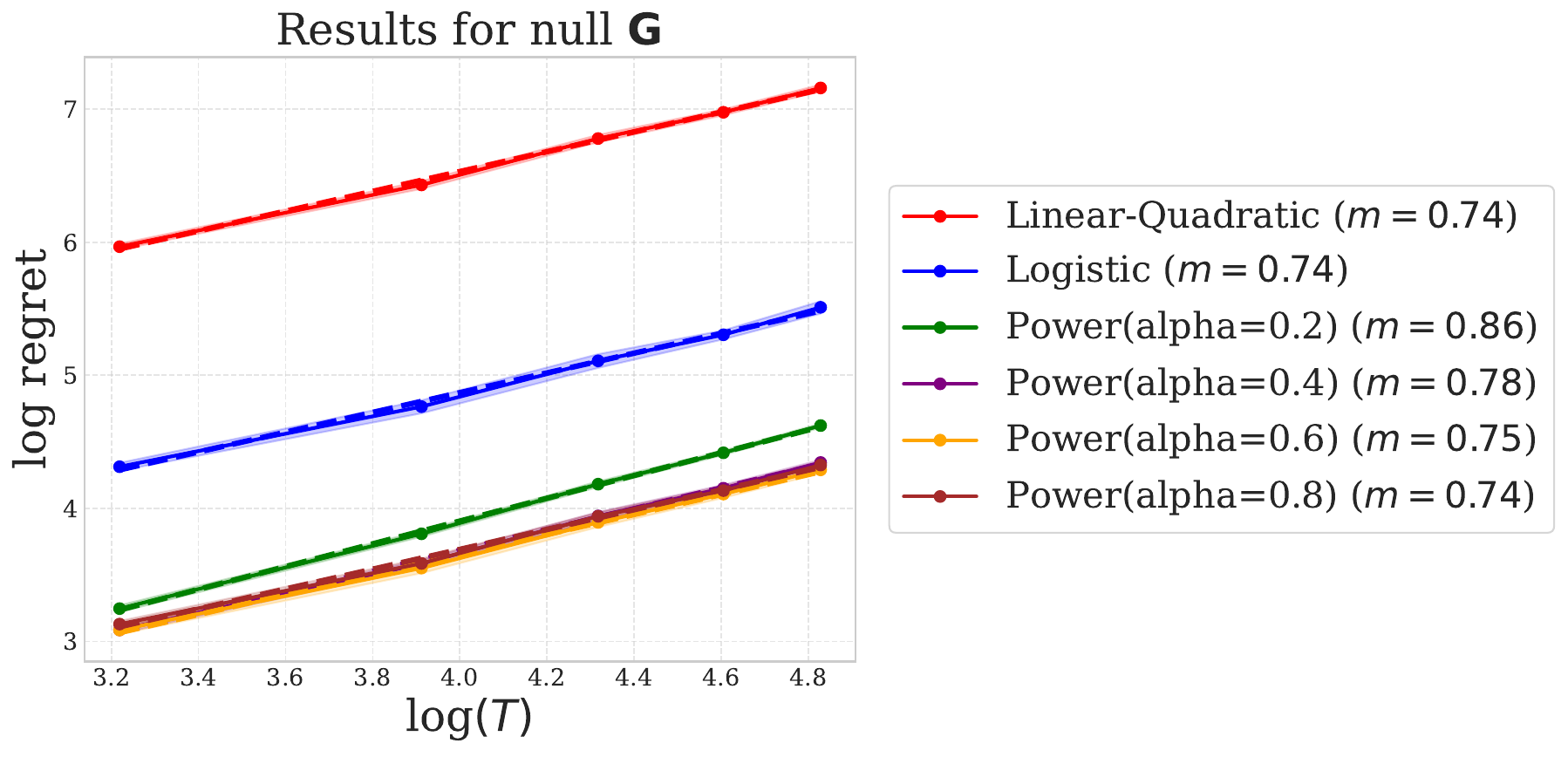}
\includegraphics[width=\linewidth]{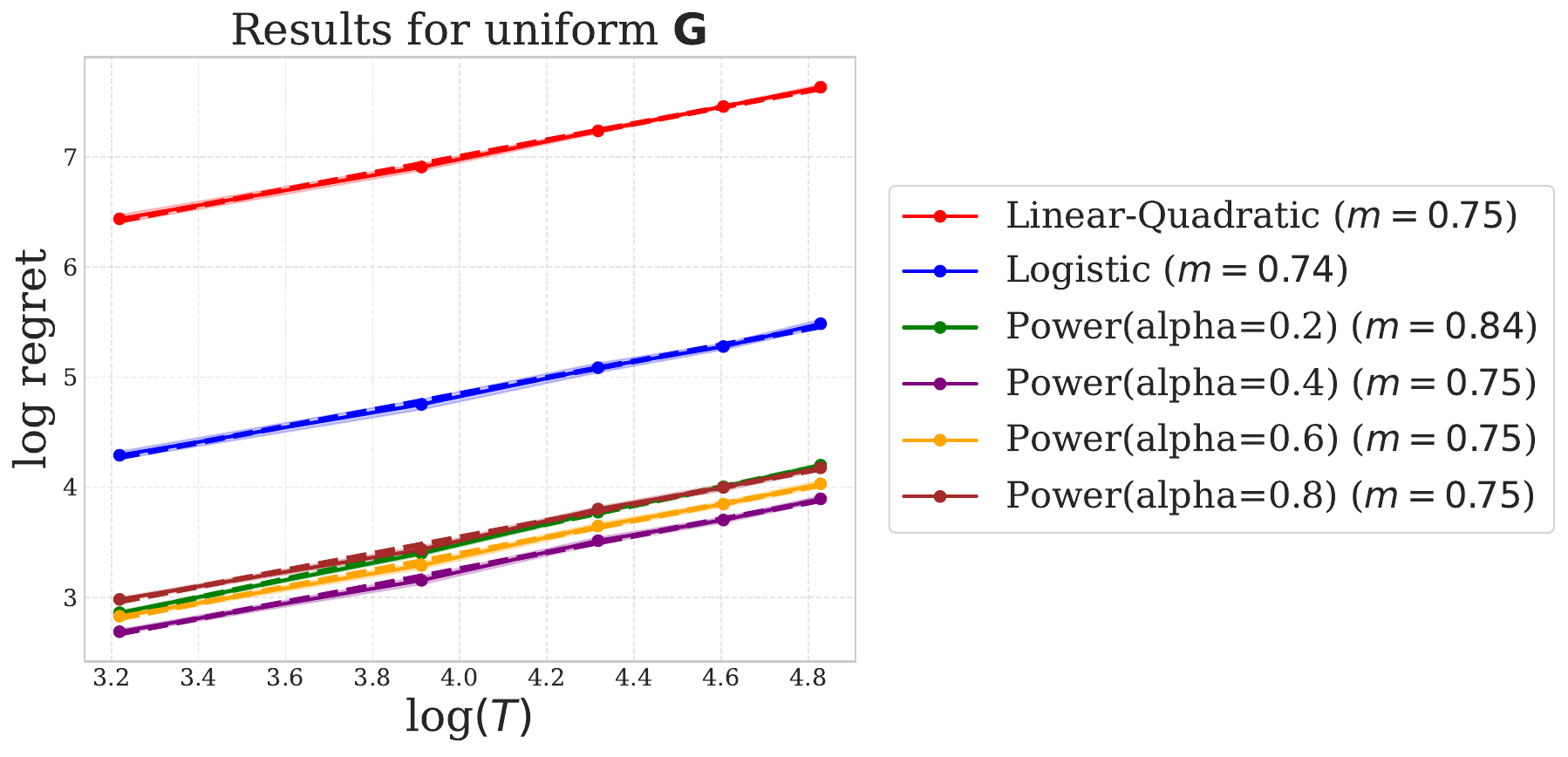}
    \caption{This plot show the regret convergence for $\mathbf{G}$ null (top) and $\mathbf{G}$ uniform (bottom). Average across the $N$ sellers of the cumulative regret as a function of the horizon $T\in\mathcal T$, displayed on a log--log scale. The empirical slopes $m$'s are consistent with the theoretical regret rates predicted by our Theorem~\ref{thm:dynamic_regret_isotonic_network} (same or lower rate), which are respectively (following the legend from top to bottom): $0.75$, NA, $0.87$, $0.81$, $0.78$, and $0.76$.}
    \label{fig:regret-G-uniform}
\end{figure}

\begin{figure}[ht]
    \centering
    \includegraphics[width=1\textwidth]{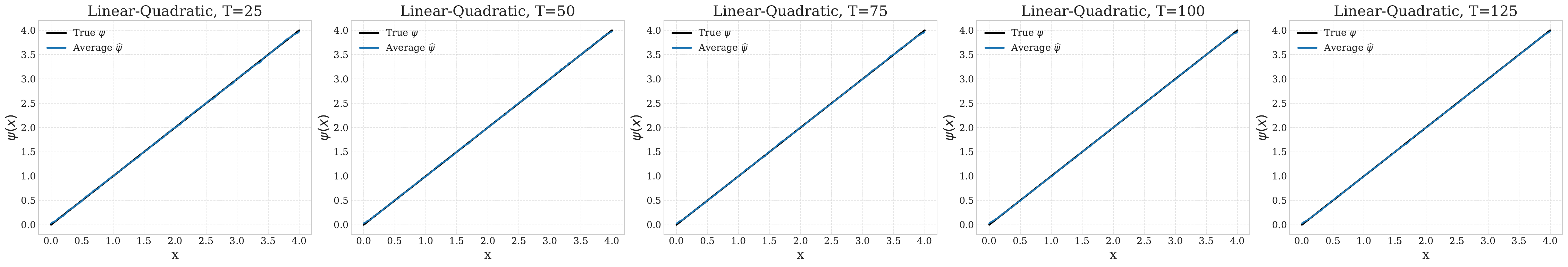}
    \includegraphics[width=1\textwidth]{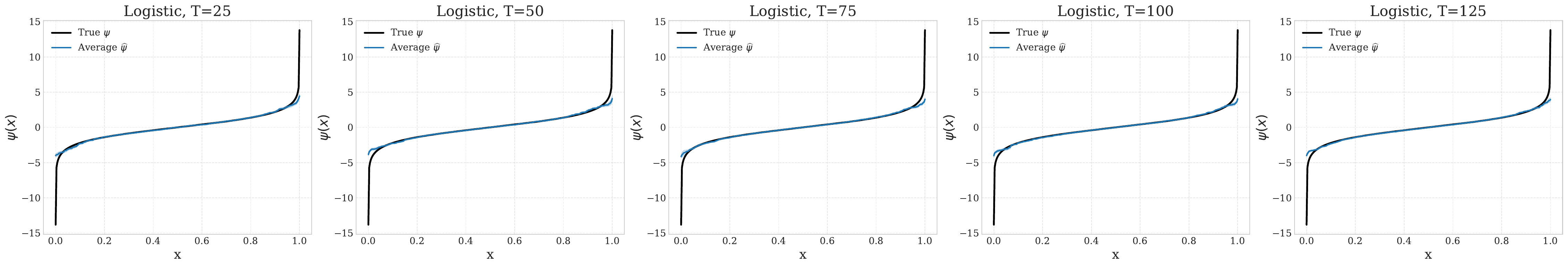}
    \includegraphics[width=1\textwidth]{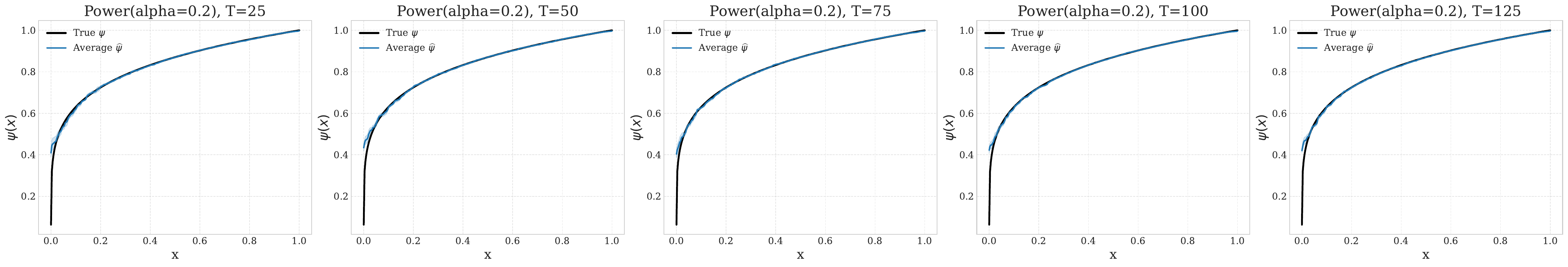}
    \includegraphics[width=1\textwidth]{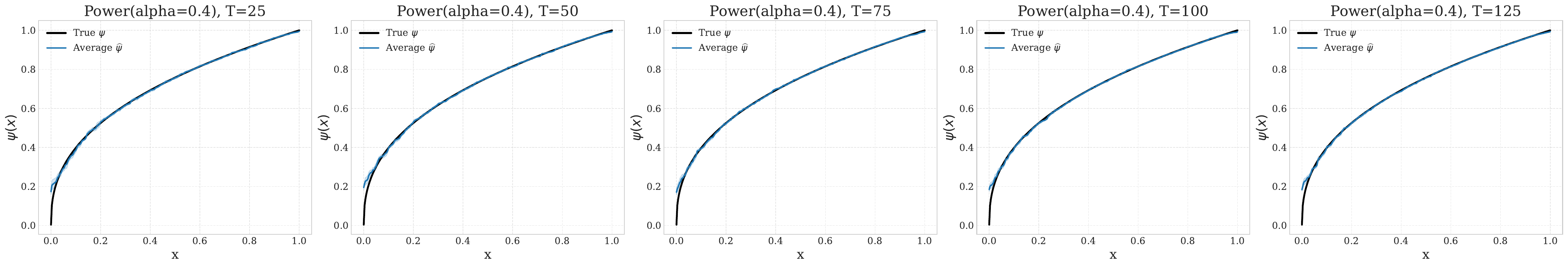}
    \includegraphics[width=1\textwidth]{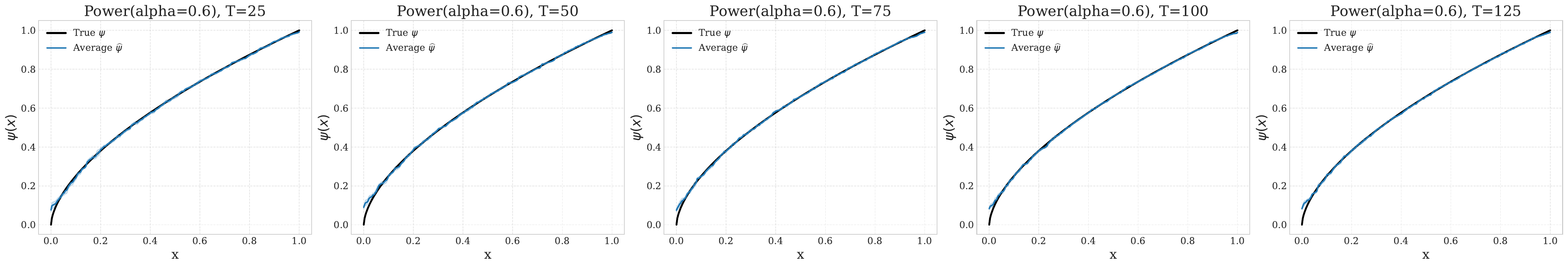}
    \includegraphics[width=1\textwidth]{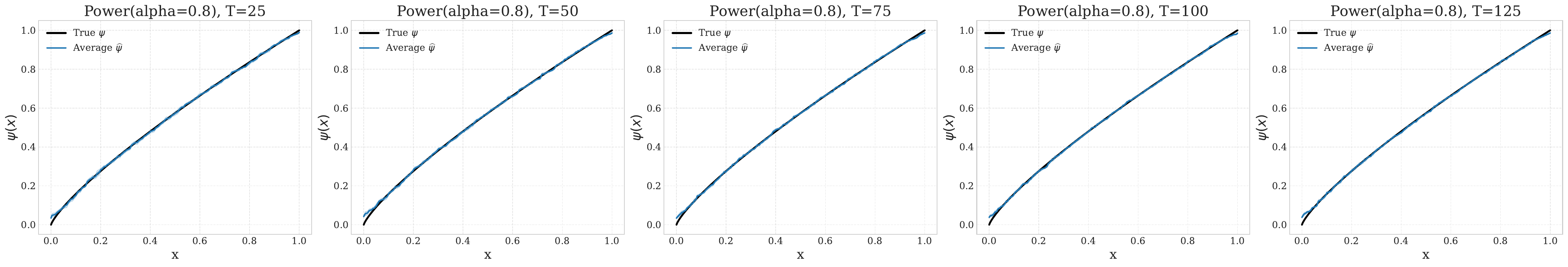}
    \caption{Isotonic regression estimate for varying $T$ in each specified model.}
    \label{fig:different_exploration}
\end{figure}

\clearpage
\newpage

\end{document}